\documentclass[11pt,a4paper]{article}

\usepackage{fullpage}

\usepackage{amsmath,amssymb}

\usepackage{mathtools}

\usepackage{bbm}

\usepackage{amsopn,amsthm}

\usepackage{subcaption}

\usepackage{graphicx} 

\usepackage{color}

\usepackage{psfrag}

\usepackage{mathrsfs} 

\usepackage{bm}

\usepackage{cases}
\usepackage{hyperref}
\hypersetup{colorlinks=true, urlcolor=blue, citecolor=blue, linkcolor=blue}

\newtheorem{thm}{Theorem}[section]
\newtheorem{lem}[thm]{Lemma}
\newtheorem{proposition}[thm]{Proposition}
\newtheorem{cor}[thm]{Corollary}

\theoremstyle{remark}
\newtheorem{rem}[thm]{Remark}

\newcommand{\Lbfg}{\widehat{\bfg}}
\newcommand{\Lbfu}{\widehat{\bfu}}
\newcommand{\Lbfv}{\widehat{\bfv}}
\newcommand{\Lbfz}{\widehat{\bfz}}
\newcommand{\Lbfw}{\widehat{\bfw}}
\newcommand{\field}[1]{{\mathbb{#1}}}
\newcommand{\C}{\field{C}}
\newcommand{\N}{\field{N}}
\newcommand{\R}{\field{R}}
\DeclareMathOperator{\Dcal}{\mathcal{D}}
\DeclareMathOperator{\Fcal}{\mathcal{F}}
\newcommand{\Ocal}{\mathcal{O}}
\DeclareMathOperator{\Scal}{\mathcal{S}}
\newcommand{\bs}{\boldsymbol} 
\newcommand{\bfa}{{\bs a}}
\newcommand{\bfd}{{\bs d}}
\newcommand{\bfe}{{\bs e}}
\newcommand{\bff}{{\bs f}}
\newcommand{\bfg}{{\bs g}}
\newcommand{\bfh}{{\bs h}}
\newcommand{\bfj}{{\bs j}}
\newcommand{\bfu}{{\bs u}}
\newcommand{\bfv}{{\bs v}} 
\newcommand{\bfw}{{\bs w}}
\newcommand{\bfx}{{\bs x}}
\newcommand{\bfy}{{\bs y}} 
\newcommand{\bfz}{{\bs z}}
\newcommand{\bfA}{{\bs A}}
\newcommand{\bfE}{{\bs E}}
\newcommand{\bfH}{{\bs H}}
\newcommand{\bfJ}{{\bs J}}
\newcommand{\bfm}{{\bs m}}
\newcommand{\bfnu}{{\bs\nu}}
\newcommand{\bfzeta}{{\bs\zeta}}
\newcommand{\bfphi}{{\bs\varphi}}
\newcommand{\Lbfphi}{\widehat{\bs\varphi}}
\newcommand{\dx}{\, \dif \bfx}
\newcommand{\ds}{\, \dif s}
\newcommand{\dt}{\, \dif t}
\newcommand{\xhat}{\widehat{\bfx}}
\newcommand{\rmi}{\mathrm{i}} 
\newcommand{\rme}{\mathrm{e}}

\DeclareMathOperator{\curl}{{\mathbf{curl}}}
\DeclareMathOperator{\curlx}{{\mathbf{curl}}_{\bfx}}
\DeclareMathOperator{\scurl}{\mathbf{Curl}}
\DeclareMathOperator{\sCurl}{Curl}
\DeclareMathOperator*{\argmin}{arg\,min}
\DeclareMathOperator{\sgrad}{\mathbf{Grad}}
\DeclareMathOperator{\divop}{\mathrm{div}}
\DeclareMathOperator{\sdiv}{\mathrm{Div}}
\newcommand{\Ei}{\bfE^i}
\newcommand{\Hi}{\bfH^i}
\newcommand{\Es}{\bfE^s}
\newcommand{\Hs}{\bfH^s}
\newcommand{\Et}{\bfE}
\newcommand{\Ht}{\bfH}
\newcommand{\LEs}{\widehat{\Et}^s}
\newcommand{\LEp}{\widehat{\bfE}'}
\newcommand{\LEt}{\widehat{\Et}}
\newcommand{\LEi}{\widehat{\Et}^i}
\newcommand{\LHs}{\widehat{\Ht}^s}
\newcommand{\HcurlOmega}{H(\curl, \Omega)}
\newcommand{\HdivO}{H^{-{1}/{2}}(\sdiv, \Gamma)}
\newcommand{\HcurlO}{H^{-{1}/{2}}(\sCurl, \Gamma)} 

\numberwithin{equation}{section}
\DeclareMathOperator{\dif}{d\!}  
\DeclareMathOperator{\real}{Re}
\DeclareMathOperator{\imag}{Im}
\DeclareMathOperator{\Sop}{\mathrm{S}}

\DeclareMathOperator{\Vop}{\mathrm{V}}

\begin{document}

\title{A Domain Derivative for Electromagnetic Scattering by Perfect Conductors in the Time Domain} 
\author{Marvin
  Kn\"oller\footnote{Department of Mathematics and Statistics, University of Helsinki, Pietari Kalmin katu 5, FI-00014, Finland
    {\tt marvin.knoller@helsinki.fi}}}

\maketitle
\begin{abstract}
A domain derivative for time-dependent electromagnetic scattering from perfect conductors is established. 
By proceeding through the Laplace domain, frequency-dependent bounds on solutions to Maxwell's equations are derived.
These bounds are used both for establishing time regularity properties of the domain derivative and for proving convergence of the proposed Runge--Kutta convolution quadrature semi-discretization in time. 
A full convergence analysis is also carried out for pointwise evaluations of the domain derivative, when this time discretization is combined with a Galerkin method in space.
Eventually, the domain derivative is applied in an iterative shape reconstruction algorithm, in which measurements of the electric near field at some receiver positions, away from the perfect conductor are measured.
Numerical examples show the feasibility of this algorithm and in particular highlight its robustness, when additional noise is applied to the data.
 \end{abstract}

{
\small\noindent
  Mathematics subject classifications (MSC2020): 
  35R30, % Inverse problems for PDEs
  78A46, % Inverse problems (including inverse scattering) in optics and electromagnetic theory
  65M32 % Numerical methods for inverse problems for initial value and initial-boundary value problems involving PDEs
% 65N21 % Numerical methods for IP for boundary value problems
  \\\noindent 
  Keywords: time-dependent electromagnetic scattering, Maxwell's equations, shape optimization, domain derivative, inverse problem, convolution quadrature, boundary element method
  \\\noindent
Short title: A Domain Derivative for Perfect Conductors in the Time Domain
  % \\[1em]\noindent Last modified: \today
}
\section{Introduction}

Electromagnetic scattering plays a major role in optics and photonics, sensing and imaging and medical applications.
Within these disciplines the optimal design of scattering objects is an essential discipline.
Shape optimization in electromagnetism can be employed to taylor how light travels, to produce remarkable phenomena or to identify quantities that are inaccessible through direct measurements.
Many realistic electromagnetic sources are polychromatic, i.e., their spectrum spans across a whole set of frequencies.
Accurate modeling and simulation of the interaction between scattering objects and polychromatic light are essential for representing and predicting physically relevant scenarios. 
The corresponding shape derivatives that describe the effect of infinitesimal geometry deformations on the electromagnetic fields need to be taylored to this time-dependent setting.

This work is about the temporal domain derivative for time-dependent free space scattering from perfectly conducting obstacles.
Our emphasis lies on (i) the derivation of the domain derivative by proceeding through the Laplace domain, (ii) an error analysis for its numerical discretization using Runge--Kutta convolution quadrature in time and a Galerkin method in space and (iii) a practical use of it in an iterative shape reconstruction algorithm.

The perfect conductor model is an idealized, free space model problem for Maxwell's equations that arises, when
 linear material properties are assumed and the relation between the current density $\bfJ$ and the electric field $\Et$ is determined by Ohm's law via $\bfJ = \sigma \Et$, where $\sigma$ is the conductivity.
  As $\sigma \to \infty$ inside the scatterer $D$, the total wave in $D$ vanishes and Maxwell's equations for the total electric field can be written outside of the scatterer as
\begin{align*}
c^{-2} \partial_t^2 \Et + \curl^2 \Et \, &= \, 0 \, \quad \text{in } \mathbb{R}^3\setminus \overline{D} \times (0,\infty)\, , 
\end{align*}
where $c$ is the speed of light in vacuum.
  This model does not allow wave transmission inside the medium. It is an obstacle scattering problem completed by the perfectly conducting boundary condition implied by the continuity of the tangential trace of $\Et$ across surfaces reading
 \begin{align*}
 \bfnu \times \Et \, = \, 0 \quad \text{on } \partial D \times (0,\infty) \, .
 \end{align*}
 
The study of electromagnetic scattering problems has been predominantly conducted in the time-harmonic case (see, e.g., \cite{ColKre19, KiHe15, Mon03, Ned01}).
In this setting, shape derivatives have been derived for various setting and have proven to be highly effective, achieving remarkable results in applications like inverse scattering, uncertainty quantification and optimal design.
Studies on the domain derivative emerged from \cite{Kirsch93} within the framework of sound-soft obstacle scattering and has been used ever since in many works on iterative shape reconstruction.
Here, we highlight the recent work \cite{SRHW25}, in which a highly sophisticated geometry representation together with the tangent-point energy as a regularizer has been used. 
In electromagnetic scattering the domain derivative for penetrable media was established in \cite{Het12}. 
In \cite{Hagetal19} the numerical implementation in the context of an inverse scattering problem followed. 
For perfect conductors, domain derivatives were derived in \cite{Hag19, HagHet20}, where also higher order shape derivatives were studied.
Recently, a systematic study of higher order shape derivatives was conducted in \cite{Baoetal26}.
In a slightly different approach, shape derivatives in the frequency domain were derived through the derivation of boundary integral equations.
For perfect conductors this was done in \cite{Pott96b}, for penetrable media in \cite{CosLou12}.
Moreover, in \cite{HipLi18} shape derivatives were derived through differential forms.

In the time-dependent setting, investigations on scattering from perfect conductors via proceeding through the Laplace domain is well-understood (see \cite{BalBanSauVeit13, BanSay22, LiMonWei15}).
Shape derivatives for time-dependent Maxwell's systems have been obtained in \cite{CagEl10, CagEl11, Zol09} in each case for interior problems on bounded domains rather than in a scattering setting.
Shape optimization based on shape derivatives has been carried out for perfectly conducting obstacles, e.g., in \cite{SchSchWa18} and \cite{Taka25} by using a time-domain discontinuous Galerkin method and a time-domain boundary element method, respectively.
Recently, the use of convolution quadrature methods as time discretization in inverse scattering problems has been considered in \cite{DonSuZha26, ZhaDonChe26, ZhaDonMa21, ZhaDonMa22}.
In these works, convolution quadrature is used to transform the time-dependent data fit formulation into a set of Laplace domain problems, on which shape derivatives are then computed.

In \cite{KN24} and \cite{GriKnoKum26} domain derivatives have been established for the wave equation in presence of a sound soft and penetrable object, respectively.
Moreover, inverse scattering problems have been solved numerically with the latter work focusing in particular on reconstructing three-dimensional scattering objects from backscattered far field data.

Finally we mention that sampling methods for time-dependent Maxwell's equations have been studied in \cite{GeSoWaWa25, LaMonSel22}.

In this work we focus on the derivation and the error analysis of the temporal domain derivative for perfect conductors and its use in an inverse scattering problem.
In particular, it is the aim to establish convergence results for the numerical discretization of the temporal domain derivative, when discretization is carried out by a Runge--Kutta convolution quadrature in time and a Galerkin method in space. The related study for the direct scattering problem has been conducted in \cite{BalBanSauVeit13}.

The work is structured as follows.
In Section \ref{sec:setting} we recall basic notations and results on scattering from perfect conductors, which are required throughout the whole work.
We study pointwise estimates, needed for the evaluation of single and double layer potentials pointwise in space and establish a higher order regularity result for scattered electric fields in the Laplace domain.
Section \ref{sec:ddL} is about the derivation of the domain derivative in the Laplace domain. 
Furthermore, we develop a boundary integral representation and prove frequency-dependent bounds for it.
In Section \ref{sec:ddT} we return to the time domain using the previously derived bounds in the Laplace domain.
Afterwards we establish error bounds for the discretization of the domain derivative, starting with a semi-discrete error and time and eventually, proceeding towards a full error discretization.
Section \ref{sec:InvScat} is about shape reconstructions using time-dependent near field data and the previously derived fully discretized temporal domain derivative. 

\section{Scattering from a perfectly conducting obstacle}\label{sec:setting}
\subsection{Preliminaries}
Denote by $\varepsilon_0$ and $\mu_0$ the electric permittivity and the magnetic permeability of vacuum.
Moreover, let $D$ be a bounded domain of class $C^{2}$ with boundary $\Gamma$ representing the scattering object. The vector field $\bfnu$ denotes the unit normal that points into the exterior of $D$.
Time-dependent Maxwell's equations for the electric field $\bfE$ and the magnetic field $\bfH$ in absence of any electric and magnetic charges and currents but in presence of the perfectly conducting obstacle $D$ in $\Omega:= \R^3 \setminus \overline{D}$ read
\begin{subequations}\label{eq:MWtotal}
\begin{align}
\varepsilon_0 \partial_t \bfE - \curl \bfH \, &= \, 0\, \quad \text{in } \Omega \times (0,\infty)\, , \\ 
\mu_0 \partial_t \bfH + \curl \bfE \,  &= \, 0\, \quad \text{in } \Omega \times (0,\infty)\, , \\
\bfnu \times \bfE \, &= \, 0 \, \quad \text{on } \Gamma \times (0,\infty) \, .
\end{align}
\end{subequations}
Let the pair $(\Ei, \Hi)$ denote a pair of incident fields satisfying
\begin{align*}
\varepsilon_0 \partial_t \Ei - \curl \Hi \, &= \, 0\, \quad \text{in } \Omega \times (0,\infty)\, , \\ 
\mu_0 \partial_t \Hi + \curl \Ei \,  &= \, 0\, \quad \text{in } \Omega \times (0,\infty)\, .
\end{align*}
Moreover, at the initial time $t=0$, let $\mathrm{supp}(\Ei(\cdot, 0))$ and $\mathrm{supp}(\Hi(\cdot, 0))$ be disjoint to $D$. As time proceeds $(\Ei, \Hi)$ propagate towards $D$. On $\Gamma$ they induce a current, which generates the scattered fields $(\Es, \Hs)$. The total fields $(\bfE, \bfH) = (\Ei, \Hi) + (\Es, \Hs)$ solve \eqref{eq:MWtotal}.
Using that $(\bfE, \bfH)$ are a superposition of $(\Ei, \Hi)$ and $(\Es, \Hs)$, we can write \eqref{eq:MWtotal} in terms of the scattered fields alone, which is
\begin{align*}
\varepsilon_0 \partial_t \Es - \curl \Hs \, &= \, 0\, \quad \text{in } \Omega \times (0,\infty)\, , \\ 
\mu_0 \partial_t \Hs + \curl \Es \,  &= \, 0\, \quad \text{in } \Omega \times (0,\infty)\, , \\
\bfnu \times \Es \, &= \, -\bfnu \times \Ei \, \quad \text{on } \Gamma \times (0,\infty) \, ,
\end{align*}
or, one can even write it in terms of the scattered electric field alone, via
\begin{subequations}\label{eq:MWEs}
\begin{align}
c^{-2} \partial_t^2 \Es + \curl^2 \Es \, &= \, 0 \, \quad \text{in } \Omega \times (0,\infty)\, , \\
\bfnu \times \Es \, &= \, - \bfnu \times \Ei \quad \text{on } \Gamma \times (0,\infty)\, ,
\end{align}
\end{subequations}
where $c := (\varepsilon \mu )^{-1/2}$ denotes the speed of light.
By using a rescaling in time, one can always assume that $c = 1$. Therefore, in our analysis we always consider $c=1$.

We now cite some well-posedness results for the system \eqref{eq:MWEs}, which are obtained by proceeding through the Laplace domain. More details on Laplace domain bounds, which are the basis for these results, are also given in the next sections below.
First, we introduce spaces, traces and operators related to Maxwell's equations.
Those can be found, e.g., in \cite{BufCoShe02, KiHe15, Mon03}.

Let $\sgrad, \divop$ and $\curl$ denote the weak surface gradient, divergence and rotation operator, respectively.
For spaces and traces related to Maxwell's equations we consider the space of tangential $L^2$ fields $L_t^2(\Gamma)$. For $\Ocal \in \{D, \Omega\}$ we introduce the standard Sobolev space $H^1(\Ocal)$, the trace space $H^{1/2}(\Gamma)$ and its dual $H^{-1/2}(\Gamma)$ as well as 
\begin{align*}
H(\curl,\mathcal{O}) \, := \, \left\{ \bfu \in  L^2(\Ocal)^3 \, : \, \curl \bfu \in L^2(\Ocal)^3 \right\} 
\end{align*}
with the norm $\Vert \cdot \Vert_{|s|,\Omega}$ that is induced by the scalar product
\begin{align}\label{eq:skp}
\langle \bfu, \bfv\rangle_{|s|,\Ocal} \, := \, \langle \curl\bfu, \curl \bfv\rangle_{L^2(\Ocal)^3} + |s|^2\langle \bfu, \bfv\rangle_{L^2(\Ocal)^3}\, \quad \text{for } s \in \C_+\, ,
\end{align}
where $\C_+ :=  \{z \in \C \, : \, \real s > 0\}$.
For $|s|=1$ the scalar product defined in \eqref{eq:skp} becomes the usual scalar product on $H(\curl,\mathcal{O})$.
The induced norms $\Vert \cdot \Vert_{H(\curl,\mathcal{O})} := \Vert \cdot \Vert_{1, \Ocal}$ and $\Vert \cdot \Vert_{|s|, \Ocal}$ are equivalent, due to the inequalities
\begin{align*}
\min\{1,|s|\}\Vert \bfu \Vert_{H(\curl,\mathcal{O})} \, \leq \, \Vert \bfu \Vert_{|s|,\Ocal}\, \leq \,
\max\{1,|s|\}\Vert \bfu \Vert_{H(\curl,\mathcal{O})} \, .
\end{align*}
For smooth functions, the tangential trace $\gamma_t$ as well as the projection onto the tangential plane $\gamma_T$ is given by 
\begin{align}\label{eq:traceop}
\gamma_t \bfu \, := \, \bfnu \times \bfu|_\Gamma\, , \qquad \gamma_T\bfu \, := \, (\bfnu \times \bfu|_\Gamma ) \times \bfnu \, .
\end{align}
Let $V_t = \gamma_t(H^1(\Ocal)^3)$ and $V_T = \gamma_T(H^1(\Ocal)^3)$ and denote by $V_t^*$ and $V_T^*$ their dual spaces, respectively. 
The surface divergence operator $\sdiv$ and the surface rotational operator $\sCurl$ may be defined for functions in $V_t^*$ and $V_T^*$, respectively, through an appropriate extension (for details see \cite{BufCoShe02}).
The spaces $\HdivO$ and $\HcurlO$ are now defined via
\begin{align*}
\HdivO \, &:= \, \left\{ \bfu \in V_t^* \, : \, \sdiv \bfu \in H^{-1/2}(\Gamma) \right\}\, , \\
\HcurlO \, &:= \, \left\{ \bfu \in V_T^* \, : \, \sCurl \bfu \in H^{-1/2}(\Gamma) \right\} \, .
\end{align*}
For $s \geq 0$ and (potentially) more regular $\Gamma$ we also define
\begin{align*}
H^s(\sdiv, \Gamma) \, := \, \{ \bfu \in H^{s}(\Gamma)^3 \, : \, \sdiv \bfu \in H^{s}(\Gamma)  \} \, .
\end{align*}
For $s=0$ we write $H(\sdiv, \Gamma)$.
The trace operators $\gamma_t$ and $\gamma_T$ from \eqref{eq:traceop} can be extended to linear, continuous and surjective operators 
\begin{align*}
\gamma_t : H(\curl,\mathcal{O}) \to \HdivO\, , \qquad \gamma_T : H(\curl,\mathcal{O}) \to \HcurlO\, ,
\end{align*}
respectively.
Both operators $\gamma_t$ and $\gamma_T$ have a bounded right inverse that we denote by $\eta_t$ and $\eta_T$.
Moreover, by $H_0(\curl,\Ocal)$ we denote the null space of $\gamma_t$ and $\gamma_T$.
For smooth functions, the vectorial surface rotation $\scurl$ is defined by
\begin{align*}
\scurl u \, := \, \bfnu \times \sgrad u\, .
\end{align*}
The vectorial surface rotation can be extended to
$\scurl : H^{1/2}(\Gamma) \to \HdivO$. 
Moreover, it can be written as $\scurl u = \gamma_t \nabla(\eta u)$, where $\eta$ is a right inverse to the trace $\gamma:H^1(\Ocal) \to H^{1/2}(\Gamma)$.

For relating the Laplace domain estimates with the time domain we stick to the seminal work \cite{L94}. A recent introduction about this procedure is also found in the books \cite{BanSay22, Say16}.
Let $\mathcal{L}$ denote the Laplace transform given by
\begin{align*}
\mathcal{L}\{u\}(s) \, := \, \int_0^\infty \rme^{-st} u(t) \dt \quad \text{for any } s \in \C_+ \, .
\end{align*}
Let $X$ be a Hilbert space and let $H^r(\R,X)$ be the Sobolev space of order $r\in \R$ of $X$-valued functions on $\mathbb R$. Moreover, on finite intervals $(0, T )$ for some $0<T< \infty$,
	we write
\begin{align*}
H_0^r(0,T;X) \, := \, \{g|_{(0,T)} \,:\, g \in H^r(\R,X)\ \text{ with }\ g = 0 \ \text{ on }\ (-\infty,0)\} \, ,
\end{align*}
i.e., the
subscript 0 in $H_0^r$ only refers to the left end-point of the time interval.
The norm on $H_0^r(0,T;X)$ is equivalent to the norm $\Vert \partial_t^r \cdot \Vert_{L^2(0,T;X)}$.
The striking connection between $s$-dependent bounds in the Laplace domain and time-domain mapping properties was made in \cite[Lem.\@ 1]{L94} and reads as follows.
Let $K(s)$ be an analytic family of bounded linear operators  $K(s):X\to Y$, $\real s \geq \sigma>0$,
which are defined between Hilbert spaces $X$ and $Y$.
Let $K$ be polynomially bounded, i.e.\@ let there exist a real $\kappa \in \R$  and $\nu\ge 0$, and for every $\sigma >0$ let there exist $M_\sigma <\infty$, such that
\begin{equation}\label{eq:pol_bound}
	\| K(s) \|_{Y\leftarrow X} \, \le \,  M_\sigma \frac{|s|^\kappa}{(\real s)^\nu}, \qquad \real s \, \geq \, \sigma \, >\,  0 \, .
\end{equation}
Then, the convolution-type operator 
\begin{equation} \label{Heaviside}
		K(\partial_t)g \, := \, \mathcal{L}^{-1}\{K\} * g 
\end{equation}
extends by density to a bounded linear operator with the mapping properties 
\begin{align*}
K(\partial_t) : H_0^{r+\kappa}(0,T;X) \to H_0^{r}(0,T;Y)\qquad \text{for any } r \in \R\, .
\end{align*}
This result is used as follows for Maxwell's equations in presence of a perfect conductor \eqref{eq:MWEs}.
Applying the Laplace transform to \eqref{eq:MWEs} yields Maxwell's equations in the Laplace domain (see also \eqref{eq:LMWEs} below). For $\LEs$ and $\LEi$, the Laplace transformed scattered and incident electric fields, respectively, it can be shown (see, e.g.\@ \cite[Prop.\@ 6.5]{BanSay22}) that there is an operator denoted by $\Sop(s)\Vop^{-1}(s): \HdivO \to H(\curl, \Omega)$ satisfying
\begin{equation*}
-\Sop(s)\Vop^{-1}(s)(\bfnu \times \LEi) = \LEs
\end{equation*}
and 
\begin{align*}
\Vert \Sop(s)\Vop^{-1}(s)\Lbfu \Vert_{\HcurlOmega} \, \leq \, M_\sigma \frac{|s|^2}{\real s} \Vert \Lbfu\Vert_{\HdivO}\quad \text{for any } \Lbfu \in \HdivO\, .
\end{align*}
Using the composition rule
\begin{equation*}
K(\partial_t)L(\partial_t) \, = \, (KL)(\partial_t)\, ,
\end{equation*} 
which holds for compatible Laplace domain operators $K(s)$ and $L(s)$,
this implies that the unique solution to \eqref{eq:MWEs} is given by $\Es \, = \, -(\Sop\Vop^{-1})(\partial_t)\gamma_t \Ei$, where
\begin{align*}
(\Sop\Vop^{-1})(\partial_t) : H_0^{r+2}(0,T ; \HdivO) \to H_0^{r}(0,T ; \HcurlOmega)
\end{align*}
for any $r \in \R$.
\subsection{Maxwell's equations in the Laplace domain}
We study the partial differential equation \eqref{eq:MWEs} in the Laplace domain and thus,
apply the Laplace transform to it.
With the Laplace transformed scattered electric field\footnote{The upper index $s$ in $\LEs$ must not be confused with the variable $s$ in the Laplace domain.} $\LEs := \mathcal{L}\{\Es\}$ Maxwell's equations for a perfectly conducting obstacle $D$ in the Laplace domain reads
\begin{subequations}\label{eq:LMWEs}
\begin{align}
\curl^2 \LEs + s^2 \LEs \, &= \, 0 \quad \text{in } \Omega\, ,\label{eq:LMWEs1}\\ 
\bfnu \times \LEs \, &= \, - \bfnu \times \LEi \quad \text{on } \Gamma \, .
\end{align}
\end{subequations}
The field $\LEi = \mathcal{L}\{\Ei \}$ is the Laplace transform of the incoming field $\Ei$, which is a solution to the Laplace domain Maxwell's equations in any bounded domain $D_0$ with $ D \subset \subset D_0$, when $\Ei(\cdot,0)$ is supported away from $D_0$.
\begin{rem}
In this work we will mainly work with electric fields. However, we note that the corresponding magnetic fields can be easily obtained in the Laplace domain since, due to Maxwell's equations, for $\LHs = \mathcal{L}\{\Hs \}$ it holds that $\LHs = -1/(s\mu_0) \curl \LEs$. In particular, this shows that regularity properties for $\curl \LEs$ can be derived from those of $\LHs$.
\end{rem}
We consider the partial differential equation \eqref{eq:LMWEs} in a slightly more general context.
For this purpose, we let $\Lbfg \in \HdivO$, $\mathcal{O} \in \{D, \Omega\}$ and study the problem to find $\Lbfv \in H(\curl,\mathcal{O})$ such that
\begin{subequations}\label{eq:MWV}
\begin{align}
\curl^2 \Lbfv + s^2 \Lbfv \, &= \, 0 \quad \text{in } \mathcal{O} \, ,\label{eq:MWV1}\\
\bfnu \times \Lbfv \, &= \, \Lbfg \quad \text{on } \Gamma \label{eq:MWV2}
\end{align}
\end{subequations}
for $s \in \C_+$. The scattering problem in the exterior \eqref{eq:LMWEs} is retrieved for $\mathcal{O}= \Omega$ and $\Lbfg = \bfnu \times \LEi$.

We introduce the fundamental solution of the three-dimensional Helmholtz equation
\begin{align*}
\Phi_s(\bfx, \bfy) \, := \, \frac{\rme^{-s |\bfx-\bfy|}}{4\pi |\bfx-\bfy|} \, \quad \text{for } \real s >0 \, .
\end{align*}
The Maxwell single layer potential is given by $\Scal(s): \HdivO \to H(\curl, \R^3\setminus \Gamma)$ with
\begin{align}
\begin{split} \label{eq:Spot}
\left(\Scal(s) \widehat{\bfa}\right)(\bfx) \, &:= \, -s \int_\Gamma \Phi_s(\bfx, \bfy) \widehat{\bfa}(\bfy) \ds(\bfy) +
\frac{1}{s} \nabla \int_\Gamma \Phi_s(\bfx, \bfy) \sdiv \widehat{\bfa}(\bfy) \ds(\bfy) \\
&= \, \frac{1}{s}\curl^2 \int_{\Gamma} \Phi_s(\bfx, \bfy) \widehat{\bfa}(\bfy) \ds(\bfy)
 \quad \text{for } \bfx \in \R^3\setminus \Gamma\, ,
 \end{split}
\end{align}
where the equality holds by \cite[Thm.\@ 5.52]{KiHe15}.
Furthermore, the Maxwell double layer potential is defined by $\Dcal(s) : \HdivO \to H(\curl, \R^3\setminus \Gamma)$ with
\begin{align}\label{eq:Dpot}
(\Dcal(s) \widehat{\bfa})(\bfx) \, := \, \curl \int_{\Gamma} \Phi_s(\bfx, \bfy) \widehat{\bfa}(\bfy) \ds(\bfy) \quad \text{for } \bfx \in \R^3\setminus \Gamma\, .
\end{align}
We also introduce the Maxwell single layer operator $\Vop(s) : \HdivO \to \HdivO$ defined by $\Vop(s)= \gamma_t \Scal(s)$. In \cite[Thm.\@ 6.4]{BanSay22} a coercivity property for $\Vop(s)$ is shown. Moreover, the $s$-dependent estimate for $\Vop^{-1}(s)$
\begin{align}\label{eq:Vcoerc}
\left\Vert \Vop^{-1}(s) \right\Vert_{\HdivO \leftarrow \HdivO} \, \leq \, C \frac{|s|^2}{\real s} \max\{1,|s|^{-1} \}
\end{align}
is proven. 
The fact that $\Vop$ is boundedly invertible leads the way to unique solvability of the problem \eqref{eq:MWV} (see \cite[Prop.\@ 6.5]{BanSay22}).
For $\mathcal{O}\in \{D,\Omega\}$, the unique solution $\Lbfv \in H(\curl, \mathcal{O})$ to \eqref{eq:MWV} can be written as $\Lbfv = \Scal(s) \Vop^{-1}(s) \Lbfg$. Moreover, $\Lbfv$ satisfies the bound
\begin{align}\label{eq:Vest}
\Vert \Lbfv \Vert_{H(\curl,\mathcal{O})} \, \leq \, C \Vert \Lbfv \Vert_{|s|,\mathcal{O}} \, \leq \,  C \frac{|s|^2}{\real s} \max\{1, |s|^{-2}\} \Vert \Lbfg \Vert_{\HdivO} \, \quad \text{for any } s\in \C_+ \, .
\end{align}
When $\Lbfg$ is replaced by $\bfnu \times \LEi$, a useful representation for $\Vop^{-1}(s)\Lbfg$ can be derived. To do so, we first note that
solutions to the Laplace domain Maxwell's equations \eqref{eq:MWV1} can be represented by a combination of a single and a double layer potential. 
This is the well-known Stratton-Chu formula: 
Any function $\Lbfv \in H(\curl, \mathcal{O})$ that satisfies \eqref{eq:MWV1} fulfills the formula
\begin{align}\label{eq:Stratton-Chu}
a_{\mathcal{O}}\left(- (\Dcal(s) \gamma_t \Lbfv )(\bfx) + \frac{1}{s} (\Scal(s)\gamma_t \curl \Lbfv )(\bfx)\right) \, = \, 
\begin{cases}
\Lbfv (\bfx)\, , \quad &\bfx \in \mathcal{O}\\
 0\, , \quad &\bfx \notin \overline{\mathcal{O}}\\
\end{cases} \, ,
\end{align}
where $a_{D} = 1$ and $a_{\Omega} = -1$.
Replacing the role of $\Lbfv$ in \eqref{eq:Stratton-Chu} by $\LEi$ for $\mathcal{O}=D$ and by $\LEs$ for $\mathcal{O}= \Omega$, respectively, and subtracting the formulas in \eqref{eq:Stratton-Chu} shows that the solution to \eqref{eq:LMWEs} may be written as
\begin{align*}
\LEs(\bfx) \, = \, -\frac{1}{s} \left(\Scal(s) \gamma_t \curl \LEt\right)(\bfx) \quad \text{for } \bfx \in \Omega \, .
\end{align*}
Applying the trace $\gamma_t$ to both sides of this formula and using the boundary condition in \eqref{eq:LMWEs} together with the invertibility of $\Vop(s)$ yields that
\begin{align*}
\frac{1}{s}\Vop(s) \gamma_t \curl \LEt \, = \, \gamma_t \LEi \quad \text{or equivalently }\quad \frac{1}{s}\gamma_t \curl \LEt =   \Vop^{-1}(s) \gamma_t \LEi \, .
\end{align*}

We derive pointwise, $s$-dependent bounds for the operators $\Scal(s)$ and $\Dcal(s)$ in the following lemma. For the single layer operator $\Scal(s)$, this has been done in \cite[Thm.\@ 4.4]{BalBanSauVeit13} already.
Similar computations for the Helmholtz single and double layer potentials were made in \cite[Lem.\@ 7]{BLM11}
and \cite{GriKnoKum26}
\begin{lem}\label{lem:pw}
Let $\bfx \in \Omega$ be fixed and let $d_\bfx=\mathrm{dist}(\bfx,\Gamma)$. Then, the operators $(\Scal \cdot)(\bfx), (\Dcal \cdot)(\bfx) : \HdivO \to \C^3$ from \eqref{eq:Spot}, \eqref{eq:Dpot} are both linear and continuous. Moreover, it holds that
\begin{align}\label{eq:pwbounds}
\left| (\Scal \cdot)(\bfx) \right| + \left| (\Dcal \cdot)(\bfx) \right| \, \leq \, C(\sigma, d_\bfx)|s|^2 \rme^{-d_\bfx \real s} \quad \text{for any } s \in \C\text{ with } \real s > \sigma > 0 \, .
\end{align}
The constant $C(\sigma, d_\bfx)$ does not depend on $s$, but on $\sigma^{-1}$ and on $d_\bfx^{-1}$.
\end{lem}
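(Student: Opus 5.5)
The plan is to view both point evaluations as dualities between $\HdivO$ and its dual $\HcurlO$, with smooth kernels in $\bfy$. Since $\bfx$ has positive distance $d_\bfx$ from $\Gamma$, the kernel $\bfy\mapsto\Phi_s(\bfx,\bfy)$ and all its derivatives are $C^\infty$ on $\Gamma$. I will use the second representation in \eqref{eq:Spot} and the definition \eqref{eq:Dpot}, and move the derivatives in $\bfx$ inside the integral. For a fixed unit vector $\bfe\in\C^3$ this gives
\begin{align*}
\bfe\cdot(\Scal(s)\widehat{\bfa})(\bfx) \, &= \, \frac1s\Big\langle \widehat{\bfa}, \, \gamma_T\big(\curlx^2 (\Phi_s(\bfx,\cdot)\bfe)\big)\Big\rangle_\Gamma \, , \\
\bfe\cdot(\Dcal(s)\widehat{\bfa})(\bfx) \, &= \, \Big\langle \widehat{\bfa}, \, \gamma_T\big(\curlx (\Phi_s(\bfx,\cdot)\bfe)\big)\Big\rangle_\Gamma \, ,
\end{align*}
up to signs and conjugations. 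Here $\langle\cdot,\cdot\rangle_\Gamma$ is the duality pairing between $\HdivO$ and $\HcurlO$. For smooth $\widehat{\bfa}$ this is just the surface integral, and it extends by density. Linearity is then immediate. Continuity, together with \eqref{eq:pwbounds}, follows once I bound the $\HcurlO$-norm of the two test fields.

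To bound those test fields, I will not work on $\Gamma$ directly. Instead I use the continuity of $\gamma_T:H(\curl,\Ocal)\to\HcurlO$, with $\Ocal=D$ or, equivalently, a bounded neighbourhood. Let $\chi$ be a fixed smooth cutoff equal to $1$ near $\Gamma$ and supported in $\{\bfy:|\bfy-\bfx|>d_\bfx/2\}$. Then
\[
\|\gamma_T \bfw\|_{\HcurlO}\le C\|\chi\bfw\|_{H(\curl)}\le C(d_\bfx)\|\bfw\|_{W^{1,\infty}(U)} ,
\]
where $U$ is the bounded set $\{\bfy \in \mathrm{supp}\,\chi\}$. So everything reduces to sup-norm bounds for derivatives of $\Phi_s(\bfx,\bfy)$ of total order at most three, taken over $\bfy\in U$, where $r=|\bfx-\bfy|\ge d_\bfx/2$. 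I would rather choose $\chi$ supported in $\{r\ge d_\bfx(1-\delta)\}$, but the more careful route is the following. Use $\Ocal=D$ when $\bfx\in\Omega$, but cut off to a thin layer $\{\bfy : \mathrm{dist}(\bfy,\Gamma)<\epsilon\}$ with $\epsilon$ small. On that layer $r\ge d_\bfx-\epsilon$.

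Each derivative of $\rme^{-sr}/(4\pi r)$ produces either a factor $s$ or a factor $r^{-1}$. Hence a derivative of order $k$ is bounded by $C(d_\bfx)\max\{1,|s|\}^k\rme^{-r\real s}$. In the single layer the prefactor $1/s$ combines with the three derivatives ($\curl^2$ plus one more from the $H(\curl)$ norm). This gives $|s|^{-1}\max\{1,|s|\}^3$. Using $|s|^{-1}\le\sigma^{-1}$ when $|s|$ is small, this is bounded by $C(\sigma)|s|^2$. In the double layer there are two derivatives, giving $\max\{1,|s|\}^2\le C(\sigma)|s|^2$.

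I expect the exponent to be the main obstacle. Working on a layer with $r\ge d_\bfx-\epsilon$ only gives $\rme^{-(d_\bfx-\epsilon)\real s}$, not $\rme^{-d_\bfx\real s}$. I would try to fix this by letting the layer width shrink with $s$, namely $\epsilon=\min\{\epsilon_0,1/\real s\}$. Then $\rme^{\epsilon\real s}\le \rme$. The price is that the cutoff derivative contributes $\epsilon^{-1}\le C(1+\real s)\le C(\sigma)|s|$. So I must check that only lower-order terms are hit by the cutoff gradient, so that the total power remains $|s|^2$. In the $H(\curl)$ norm the cutoff gradient replaces one derivative of $\Phi_s$, so the power count is unchanged. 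If that bookkeeping fails, the fallback is to accept the standard weaker form with a $d_\bfx$-dependent constant and $\rme^{-(d_\bfx-\epsilon)\real s}$. For the single layer I would check consistency with \cite[Thm.\@ 4.4]{BalBanSauVeit13}.
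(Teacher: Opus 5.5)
Your proposal is correct and is essentially the paper's argument. In both, the point evaluation is a duality pairing against a kernel that is smooth on $\Gamma$, whose dual norm is controlled through a trace theorem by pointwise bounds on derivatives of $\Phi_s$ of order at most three; the paper uses the componentwise $H^{1/2}$--$H^{-1/2}$ pairing with the $H^1$ trace, while you use the $\HdivO$--$\HcurlO$ duality with $\gamma_T$ on $H(\curl,\Ocal)$, which gives the same power count. The exponent issue you flag needs no shrinking layer: for $\bfx\in\Omega$ and $\bfy\in\overline{D}$ the segment from $\bfx$ to $\bfy$ must cross $\Gamma$, so $|\bfx-\bfy|\ge d_\bfx$ on all of $D$, and taking $\Ocal=D$ (bounded, no cutoff) gives $\rme^{-d_\bfx\real s}$ exactly. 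Your bound $r\ge d_\bfx-\epsilon$ on the interior layer was needlessly pessimistic, although your $\epsilon\sim 1/\real s$ fix also works.
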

\begin{proof}
By straightforward computations we see that for any $\bfy \in \Gamma$, it holds that
\begin{align}\label{eq:Phiest}
|\partial_{x_i}\partial_{x_j} \Phi(\bfx,\bfy)|\, &\leq \, C(\sigma, d_\bfx) |s|^2 \rme^{-d_\bfx \real s} \, , \quad
|\partial_{x_i}\partial_{x_j}\partial_{x_k}  \Phi(\bfx,\bfy)|\, \leq \, C(\sigma, d_\bfx) |s|^3 \rme^{-d_\bfx \real s} 
\end{align}
for any $1\leq i,j,k \leq 3$. We understand the $\curl$ operator to act on matrices columnwise and find that $\curlx(\Phi_s(\bfx,\bfy) \widehat{\bfa}(\bfy)) = \curlx(\Phi_s(\bfx,\bfy)I_3) \widehat{\bfa}(\bfy)$. We apply the Cauchy-Schwarz inequality componentwise to the $H^{1/2} \times H^{-1/2}$ dual pairing, use the boundedness of the trace $\gamma:H^1(\Omega) \to H^{1/2}(\Gamma)$ together with the estimates in \eqref{eq:Phiest} and find that for any $\widehat{\bfa} \in \HdivO$
\begin{align*}
\left| (\Scal \widehat{\bfa})(\bfx) \right| \, &= \, |s|^{-1}\left| \int_\Gamma \curl_\bfx^2 \left( \Phi_s(\bfx,\bfy) I_3\right) \widehat{\bfa}(\bfy) \ds(\bfy) \right|
\, \leq \, C(\sigma, d_\bfx) |s|^2 \rme^{-d_\bfx \real s} \Vert \widehat{\bfa} \Vert_{\HdivO} \, .
\end{align*}
Applying the same steps to $\left| (\Dcal \widehat{\bfa})(\bfx) \right|$ yields that
\begin{align*}
\left| (\Dcal \widehat{\bfa})(\bfx) \right| \, &= \, \left| \int_\Gamma \curl_\bfx \left( \Phi_s(\bfx,\bfy) I_3\right) \widehat{\bfa}(\bfy) \ds(\bfy) \right| \, \leq \, C(\sigma, d_\bfx) |s|^2 \rme^{-d_\bfx \real s} \Vert \widehat{\bfa} \Vert_{\HdivO}\, ,
\end{align*}
what finishes the proof.
\end{proof}
We end this section with a lemma on the regularity of the scatterer $D$ and how it influences the regularity of solutions to the Laplace domain Maxwell's equations.
To abbreviate, we write $H_t^s(\Gamma) = H^s(\Gamma)^3 \cap L_t^2(\Gamma)$.
\begin{lem}\label{lem:reg}
Let $D$ be of class $C^{2}$, $\Omega = \R^3\setminus \overline{D}$, $\mathcal{O} \in \{D,\Omega\}$ and let $\Lbfg \in H_t^{1/2}(\Gamma)$. Any weak solution $\Lbfv \in H(\curl, \mathcal{O})$ of \eqref{eq:MWV}
for $s \in \C_+$ satisfies $\Lbfv \in H^1(\mathcal{O})^3$ and 
\begin{align}\label{eq:RegBound}
\Vert \Lbfv \Vert_{H^1(\mathcal{O})^3} \, \leq \, C \frac{|s|^2}{\real s} \Vert \Lbfg \Vert_{H^{1/2}(\Gamma)^3} \, .
\end{align}
\end{lem}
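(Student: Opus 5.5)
The plan is to reduce the $H^1$ regularity to the classical embedding of $H(\curl)\cap H(\divop)$ with a tangential (or normal) trace in $H^{1/2}$ into $H^1$ on $C^2$ (in fact $C^{1,1}$) domains, and then to track the $s$-dependence through \eqref{eq:Vest}. First observe that taking the divergence of \eqref{eq:MWV1} gives $s^2\divop \Lbfv = 0$, and hence $\divop\Lbfv=0$ in $\mathcal{O}$, because $s\neq 0$. So $\Lbfv\in H(\curl,\mathcal{O})\cap H(\divop,\mathcal{O})$ with $\divop \Lbfv = 0$ and $\bfnu\times\Lbfv=\Lbfg\in H_t^{1/2}(\Gamma)$.

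For $\mathcal{O}=D$ I would cite the regularity estimate for $C^{1,1}$ (hence $C^2$) domains, e.g. from \cite{Mon03} or the Amrouche--Bernardi--Dauge--Girault type results:
\begin{align*}
\Vert \bfu\Vert_{H^1(D)^3} \, \leq \, C\left(\Vert \bfu\Vert_{L^2(D)^3} + \Vert \curl\bfu\Vert_{L^2(D)^3} + \Vert\divop\bfu\Vert_{L^2(D)} + \Vert \bfnu\times\bfu\Vert_{H^{1/2}(\Gamma)^3}\right).
\end{align*}
For $\mathcal{O}=\Omega$ the domain is unbounded. Here I would multiply by a smooth cutoff $\chi$ equal to $1$ near $\Gamma$ and supported in a ball $B_R$. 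Then $\chi\Lbfv$ lies on the bounded $C^2$ domain $B_R\setminus\overline D$, and its divergence $\nabla\chi\cdot\Lbfv$ and its curl $\chi\curl\Lbfv+\nabla\chi\times\Lbfv$ are controlled by $\Vert\Lbfv\Vert_{H(\curl,\Omega)}$. Its tangential trace vanishes on $\partial B_R$. On $(1-\chi)\Lbfv$, which is a divergence-free field supported away from $\Gamma$, I would use the whole-space identity $\Vert\nabla\bfw\Vert_{L^2}^2=\Vert\curl\bfw\Vert^2_{L^2}+\Vert\divop\bfw\Vert^2_{L^2}$, valid for $\bfw\in H^1(\R^3)^3$ (after a density argument, applied to $\bfw = (1-\chi)\Lbfv$ extended by zero). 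Together these give
\begin{align*}
\Vert \Lbfv\Vert_{H^1(\mathcal{O})^3} \, \leq \, C\left(\Vert \Lbfv\Vert_{H(\curl,\mathcal{O})} + \Vert\Lbfg\Vert_{H^{1/2}(\Gamma)^3}\right),
\end{align*}
with $C$ depending only on $D$ and the cutoff, not on $s$.

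Then I would insert \eqref{eq:Vest} together with the continuous embedding $H_t^{1/2}(\Gamma)\hookrightarrow\HdivO$. This embedding is the point requiring care, since $\sdiv\Lbfg$ must lie in $H^{-1/2}(\Gamma)$. I would argue it via $\Lbfg=\gamma_t\bfw$ for an $H^1$ lifting $\bfw$ of the tangential field $\Lbfg\times\bfnu$: for tangential $\Lbfg$ one has $\bfnu\times(\Lbfg\times\bfnu)=\Lbfg$, the field $\Lbfg\times\bfnu$ lies in $H^{1/2}$ because $\bfnu$ is $C^1$, and continuity of $\gamma_t:H(\curl)\to\HdivO$ then gives the embedding. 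This produces the factor $\frac{|s|^2}{\real s}\max\{1,|s|^{-2}\}$. The remaining step is to absorb the $\max$ and the additive term $\Vert\Lbfg\Vert$ into $C|s|^2/\real s$. For $|s|\geq 1$ this works because $|s|^2/\real s\geq |s|\geq1$. For small $|s|$, the factor $|s|^{-2}$ turns the bound into $1/\real s$, which is not dominated by $|s|^2/\real s$ as $|s|\to 0$.

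I expect this small-frequency bookkeeping to be the main obstacle. My first attempt would be to restrict to $\real s\geq\sigma>0$ and let $C$ depend on $\sigma$, as is done elsewhere in the paper. Then $|s|\geq\sigma$, so $\max\{1,|s|^{-2}\}\leq\max\{1,\sigma^{-2}\}$ and $1\leq |s|^2/\sigma^2$, which closes the estimate. The elliptic regularity estimate itself I would take from the literature rather than reprove.
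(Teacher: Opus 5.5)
Your proposal is correct and follows the paper's proof essentially step by step: $\divop\Lbfv=0$ from the weak form, the $X_N$-embedding of \cite{Ametal98} on $D$ and on a truncated exterior domain for $\chi\Lbfv$, the identification $H(\curl,\R^3)\cap H(\divop,\R^3)=H^1(\R^3)^3$ for $(1-\chi)\Lbfv$, and finally \eqref{eq:Vest} via the embedding $H_t^{1/2}(\Gamma)\hookrightarrow\HdivO$. (Contrary to your wording, $(1-\chi)\Lbfv$ is not divergence-free, but its divergence $-\nabla\chi\cdot\Lbfv$ is bounded by $C\Vert\Lbfv\Vert_{L^2}$, so nothing breaks.)

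Your small-frequency caveat is well founded and in fact necessary. The paper silently drops the factor $\max\{1,|s|^{-2}\}$ and the additive $\Vert\Lbfg\Vert_{H^{1/2}(\Gamma)^3}$ term. Since the trace theorem gives $\Vert\Lbfg\Vert_{H^{1/2}(\Gamma)^3}\le C\Vert\Lbfv\Vert_{H^1(\mathcal{O})^3}$, while $|s|^2/\real s=s\to0$ for real $s\to0^+$, the bound can only hold for $\real s\ge\sigma$ with $C=C_\sigma$. That is exactly the form in which it is used in Lemma~\ref{lem:Fbounds}.
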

\begin{proof}
By \cite[Thm.\@ 2.12, Rmk.\@ 2.14]{Ametal98}, the space
\begin{align*}
X_N(D) \, := \, \left\{ \Lbfu \in H(\curl,D) \cap H(\divop, D) \, : \, \bfnu \times \Lbfu \in H^{1/2}(\Gamma)^3 \right\}
\end{align*}
with the norm
\begin{align*}
\Vert \Lbfu \Vert_{X_N(D)}^2 \, := \, \Vert \Lbfu \Vert_{L^2(D)^3}^2 + \Vert \curl \Lbfu \Vert_{L^2(D)^3}^2 + \Vert \divop\Lbfu \Vert_{L^2(D)}^2 + \Vert \bfnu \times \Lbfu \Vert_{H^{1/2}(\Gamma)^3}^2
\end{align*}
is compactly embedded in $H^1(D)^3$. The weak formulation of \eqref{eq:MWV} implies that the weak divergence of $\Lbfv$ vanishes.
An application of \eqref{eq:Vest} for $\mathcal{O}=D$ now yields that
\begin{align*}
\Vert \Lbfv \Vert_{H^1(D)^3} \, \leq \, C( \Vert \Lbfv \Vert_{H(\curl,D)} + \Vert \Lbfg \Vert_{H^{1/2}(\Gamma)^3}) \, \leq \, C \frac{|s|^2}{\real s} \Vert \Lbfg \Vert_{H^{1/2}(\Gamma)^3}\, .
\end{align*}
To obtain the bound \eqref{eq:RegBound} for $\mathcal{O}=\Omega$ we choose $R>0$ in such a way that $D\subset B_{R}(0)$ and decompose $\Lbfv \in H(\curl, \Omega)$ into
$\Lbfv = \chi \Lbfv + (1-\chi)\Lbfv$, where $\chi \in C_c^\infty(B_{2R}(0))$ is a cut-off function satisfying $\chi = 1$ in $B_{R}(0)$.
The function $\chi \Lbfv$ is supported in $\Omega' = B_{3R}(0) \setminus \overline{D}$.
Straightforward calculations show that $\chi \Lbfv \in H(\curl, \Omega') \cap H(\divop, \Omega')$ and that $\bfnu \times (\chi \Lbfv) = \Lbfg$ on $\Gamma$ and $\bfnu \times (\chi \Lbfv) = 0$ on $\partial B_{3R}(0)$. 
We can now apply the embedding $X_N(\Omega')$ into $H^1(\Omega')^3$ and obtain
\begin{align}\label{eq:v1est}
\begin{split}
\Vert \chi \Lbfv \Vert_{H^1(\Omega')^3} \, &\leq \, C \left( \Vert \chi \Lbfv \Vert_{H(\curl, \Omega')}
+ \Vert \divop (\chi \Lbfv) \Vert_{L^2(\Omega')}
+ \Vert \Lbfg \Vert_{H^{1/2}(\Gamma)^3}
\right) \\
&\leq \, C \left( \Vert \Lbfv \Vert_{H(\curl, \Omega')}
+ \Vert \Lbfg \Vert_{H^{1/2}(\Gamma)^3}
+ \Vert \nabla \chi \Vert_{L^\infty(A)^3} \Vert \Lbfv \Vert_{L^2(A)^3}
\right) \\
&\leq \, C \left( \Vert \Lbfv \Vert_{H(\curl, \Omega)}
+ \Vert \Lbfg \Vert_{H^{1/2}(\Gamma)^3}
\right)\, ,
\end{split}
\end{align}
where in the intermediate step of \eqref{eq:v1est}, $A = B_{2R}(0)\setminus \overline{B_{R}(0)}$.
On the other hand, the function $(1-\chi) \Lbfv$ is supported in $\R^3\setminus \overline{B_{2R}(0)}$. We can extend this function by $0$ into all $\R^3$. 
Since $\chi$ is smooth, this extended function is in $H(\curl, \R^3) \cap H(\divop, \R^3)$. It is known that this space coincides with $H^1(\R^3)^3$ and that the corresponding norms are equivalent.
Therefore, 
\begin{align}\label{eq:v2est}
\begin{split}
\Vert (1-\chi)\Lbfv \Vert_{H^1(\R^3)}
&\leq \, C \left( \Vert (1-\chi) \Lbfv \Vert_{H(\curl, \R^3)}
+ \Vert \divop ((1-\chi) \Lbfv) \Vert_{L^2(\R^3)}
\right) \\
&\leq \, C \left( \Vert \Lbfv\Vert_{H(\curl, \Omega)}
+ \Vert \Lbfg \Vert_{H^{1/2}(\Gamma)^3}
\right)\, .
\end{split}
\end{align}
Combining \eqref{eq:v1est} and \eqref{eq:v2est} and using \eqref{eq:Vest} now yields \eqref{eq:RegBound} for $\mathcal{O} = \Omega$.

\end{proof}
\begin{rem}\label{rem:regcurl}
Under the additional regularity assumption that $\Lbfg\in H_t^{3/2}(\Gamma)$
there also holds that $\curl \Lbfv\in H^1(\mathcal{O})^3$.  In fact, by using again \cite{Ametal98}, one finds that also the space 
\begin{align*}
X_T(D) \, := \, \left\{ \Lbfu \in H(\curl,D) \cap H(\divop, D) \, : \, \bfnu \cdot \Lbfu \in H^{1/2}(\Gamma) \right\}
\end{align*}
embeds compactly into $H^1(D)^3$. Since $\divop \curl \Lbfv=0$ and $\curl (\curl \Lbfv) = s^2 \Lbfv$ one finds that $\curl \Lbfv\in H(\curl,D) \cap H(\divop, D)$. Moreover, since $\sdiv : H_t^{3/2}(\Gamma) \to H^{1/2}(\Gamma)$ is bounded (see, e.g., \cite[Prop.\@ 3.3]{MelWör26}) and
\begin{align*}
\bfnu \cdot \curl \Lbfv \, = \, \sdiv(\bfnu \times \Lbfv) \, = \, \sdiv \Lbfg
\end{align*} 
one finds that $\Vert \bfnu \cdot \curl \Lbfv \Vert_{H^{1/2}(\Gamma)^3} \leq C \Vert \widehat{\bfg} \Vert_{H^{3/2}(\Gamma)^3}$. Consequently, by proceeding as in Lemma~\ref{lem:reg} for $\mathcal{O} \in \{D, \Omega\}$ one finds that
\begin{align}\label{eq:RegBound2}
\Vert \curl \Lbfv \Vert_{H^1(\mathcal{O})^3} \, \leq \, C\left( \Vert \curl \Lbfv \Vert_{H(\curl, \mathcal{O})} + \Vert \bfnu \cdot \curl \Lbfv \Vert_{H^{1/2}(\Gamma)^3} \right)\, \leq \, C \frac{|s|^3}{\real s} \Vert \Lbfg \Vert_{H^{3/2}(\Gamma)^3} \, .
\end{align}
Using that
\begin{align*}
-\frac{1}{s} \left( \gamma_t (\curl (\Scal(s)\Lbfg)|_+) - \gamma_t (\curl (\Scal(s)\Lbfg)|_-) \right) \, = \, \Lbfg
\end{align*}
(see, e.g., \cite[p.\@ 155]{BanSay22} or \cite[Thm.\@ 5.52]{KiHe15}) we find by using \eqref{eq:RegBound2} that 
\begin{align*}
\Vop^{-1}(s) :H_t^{3/2}(\Gamma) \to  H_t^{1/2}(\Gamma) \quad \text{with } \quad  \Vert \Vop^{-1}(s) \Vert_{H^{1/2}(\Gamma)^3 \leftarrow H^{3/2}(\Gamma)^3} \, \leq \, C \frac{|s|^2}{\real s} \, .
\end{align*}
More generally, one can show by using the same arguments as earlier, that if $\Gamma$ is the boundary of a $C^{m+1}$ domain for $m\in \N$, then
\begin{align}\label{eq:Vhighbound}
\Vop^{-1}(s) :H_t^{m+1/2}(\Gamma) \to  H_t^{m-1/2}(\Gamma) \quad \text{with} \quad  \Vert \Vop^{-1}(s) \Vert_{H^{m-1/2}(\Gamma)^3 \leftarrow H^{m+1/2}(\Gamma)^3} \, \leq \, C \frac{|s|^2}{\real s} \, .
\end{align}
\end{rem}

\begin{rem}
Frequency explicit regularity bounds for Maxwell's equations were also studied in \cite{BanSau12}.
\end{rem}

\section{The domain derivative in the Laplace domain}\label{sec:ddL}
In this section we proceed similar to \cite[Ch.\@ 3]{Hag19}, where the domain derivative for time-harmonic Maxwell's equations has been derived.
For our purpose, we derive estimates explicitly in terms of the norm of the incoming field $\LEi$ and in terms of the Laplace domain parameter $s$.
Recently, estimates that depend explicitly on the norm of the incoming field have also been performed in the time-harmonic setting in \cite{AreGriKnoSchu26} in order to establish Fr\'echet differentiability of the far field operator.

Let $\bfh \in C^1(\Gamma,\R^3)$ be a vector field on the boundary $\Gamma$ of $D$ that is supposed to deform the scatterer.
Let $R>0$ be so large that $D_0 = B_R(0)$ includes all scatterers under consideration.
Due to \cite[Thm.\@ 1.5]{Ho99}, $\bfh$ can be extended to a function (still denoted by) $\bfh \in C_0^1(D_0,\R^3)$ such that $\Vert \bfh \Vert_{C^1(D_0)} \leq C \Vert \bfh \Vert_{C^1(\Gamma)}$.
We use this extended $\bfh \in C_0^1(D_0, \R^3)$ to define a diffeomorphism 
\begin{align*}
\bfzeta: D_0 \to D_0 \, , \qquad \bfzeta(\bfx) \, := \, \bfx + \bfh(\bfx)
\end{align*}
and let $D_\bfh = \bfzeta(D)$ and $\Gamma_\bfh = \bfzeta(\Gamma)$.
We define $M\in \N$ points away from $D_\bfh$ for all $\Vert \bfh \Vert_{C^1(\Gamma)}<h_0$ and denote them by $\bfz_1, \dots, \bfz_M \in \R^3$.
For these fixed points and for a fixed Laplace domain incident wave $\LEi$, let $\widehat{X}$ denote the operator that maps the boundary $\Gamma$ to $\LEs$ evaluated at the points $\bfz_1, \dots, \bfz_M$, i.e.
$\widehat{X}(\Gamma) = (\LEs(\bfz_j))_{j=1,\dots,M} \in \C^{3M}$.
Moreover, we define the operator
\begin{align}\label{eq:FhatOp}
\widehat{F}_\Gamma : D(\widehat{F}_\Gamma) \subset C^1(\Gamma, \R^3) \to \C^{3M}\, , \qquad \widehat{F}_\Gamma(\bfh) \, = \, \widehat{X}(\Gamma_\bfh)\, ,
\end{align}
where $D(\widehat{F}_\Gamma)$ is a neighborhood of the zero function in $C^1(\Gamma, \R^3)$ that is so small that $\Gamma_\bfh = \bfzeta(\Gamma)$ is a well-defined boundary of a scattering object $D_\bfh$.
The aim of this chapter is to characterize the Fr\'echet derivative of $\widehat{F}_\Gamma(0)$, i.e. we want to determine the operator $\widehat{F}_\Gamma'(0) : C^1(\Gamma, \R^3) \to \C^{3M}$ that satisfies
\begin{align*}
\frac{1}{\Vert \bfh \Vert_{C^1(\Gamma)}}|\widehat{F}_\Gamma(\bfh) - \widehat{F}_\Gamma(0) - \widehat{F}_\Gamma'(0)\bfh | \, \to \, 0 \quad \text{as } \, \Vert \bfh \Vert_{C^1(\Gamma)} \, \to \, 0\, .
\end{align*}
We start by writing down the weak formulation of the problem \eqref{eq:LMWEs1} for both the unperturbed object $D$, as well as for the perturbed object $D_\bfh$. 
For this purpose, let $\chi \in C_0^\infty(\R^3)$ denote a smooth cutoff function satisfying $0\leq \chi \leq 1$, $\chi=1$ in $D_0$ and $\chi = 0$ in $D_1$, where $D \subset \subset D_0 \subset\subset D_1$.
We assume that $\bfz_j \in \R^3\setminus \overline{D_1}$ for all $j=1,\dots,M$.
This particular choice of $\chi$ is useful, since both $\LEs+(\chi \LEi)|_{\Omega}$ and $\LEs_\bfh + (\chi \LEi)|_{\Omega_\bfh}$ have a vanishing tangential trace on $\partial D$ and $\partial D_\bfh$, respectively, while at the same time $\chi \LEi$ remains a solution to the Laplace domain Maxwell's equations in $D_0$.
The weak formulation of \eqref{eq:LMWEs1} for the scatterer $D$ may be formulated as the task to find $\Lbfz \in H_0(\curl,\Omega)$ such that $a(\Lbfz, \Lbfphi) = f(\Lbfphi)$ for all $\Lbfphi \in  H_0(\curl,\Omega)$, where
\begin{subequations}
\begin{align}
a(\Lbfz, \Lbfphi) \, &:= \, \int_\Omega \curl \Lbfz \cdot \overline{\curl \Lbfphi} + s^2 \Lbfz \cdot \overline{\Lbfphi} \dx \label{def:a}\, ,\\
f(\Lbfphi) \, &:= \, \int_\Omega \big(\curl (\LEi\chi)\big) \cdot \overline{\curl \Lbfphi} + s^2 \big(\LEi\chi\big) \cdot \overline{\Lbfphi }\dx \,
\end{align}
\end{subequations}
for all $\Lbfphi \in  H_0(\curl,\Omega)$ and to define $\LEs = \Lbfz - \LEi \chi$.
Note that, by straightforward computations, the antilinear functional $f$ can be written as
\begin{align*}
f(\Lbfphi) \, = \, -\int_{D_1 \setminus \overline{D_0}} \nabla \chi \cdot \left( \overline{\curl \Lbfphi} \times \LEi + \overline{\Lbfphi} \times \curl \LEi \right)\dx \, .
\end{align*}
The same weak formulation holds for the perturbed field $\LEs_\bfh$, when the domain $\Omega$ is replaced by $\Omega_\bfh$. 
In this situation, one states the weak formulation for $ \Lbfz_\bfh$ and defines $\LEs_\bfh = \Lbfz_\bfh - \LEi \chi$.
More importantly, a $\curl$-conforming variable transform back to the unperturbed domain $\Omega$ (see \cite[p.\@ 41-42]{Hag19}) shows, that the weak formulation is equivalent to the task to find $\Lbfz_\bfh^*\in  H_0(\curl,\Omega) $ such that $a_\bfh(\Lbfz_\bfh^*, \Lbfphi) = f(\Lbfphi)$ for all $\Lbfphi \in  H_0(\curl,\Omega)$, where
\begin{align*}
a_\bfh(\Lbfz_\bfh^*, \Lbfphi) \, := \, \int_\Omega \frac{1}{\det (J_\bfzeta)} \curl \Lbfz_\bfh^{* \top} J_\bfzeta^\top J_\bfzeta \overline{\curl \Lbfphi}
+ s^2 \Lbfz_\bfh^{* \top} J_\bfzeta^{-1} J_\bfzeta^{-\top} \overline{\Lbfphi} \det(J_\bfzeta) \dx \, 
\end{align*}
and to define $(\LEs_\bfh)^* =\Lbfz_\bfh^* - \LEi \chi$. The relation between $\Lbfz_\bfh$ and $\Lbfz_\bfh^*$ is given by $\Lbfz_\bfh^*(\bfx) = J_\bfzeta^\top(\bfx) \Lbfz_\bfh(\bfzeta(\bfx))$ (see \cite[Eq.\@ (3.1.2)]{Hag19}).
In the next lemma we study the continuity of $\Lbfz_\bfh^*$ and $(\LEs_\bfh)^*$ and derive important $s$-dependent bounds.
\begin{lem}
It holds that 
\begin{align}\label{eq:contres}
\Vert (\LEs_\bfh)^* - \LEs \Vert_{|s|, \Omega} \, = \, \Vert \Lbfz_\bfh^* - \Lbfz \Vert_{|s|, \Omega} \, \leq \, C \frac{|s|^2}{(\real s)^2} \Vert \bfh \Vert_{C^1(\Gamma)} \Vert \LEi \Vert_{H(\curl, D_1\setminus \overline{D_0})}\, .
\end{align}
\end{lem}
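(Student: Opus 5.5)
The plan is to compare the two variational problems on the fixed domain $\Omega$. The equality in \eqref{eq:contres} is immediate, since $(\LEs_\bfh)^* - \LEs = (\Lbfz_\bfh^* - \LEi\chi) - (\Lbfz - \LEi\chi) = \Lbfz_\bfh^* - \Lbfz$. For the inequality, the idea is to write the difference as the solution of an unperturbed problem with a small right-hand side, and then invoke coercivity of $a$.

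The first step is the coercivity estimate for $a$. Take $\Lbfphi = \Lbfw$ and multiply by $\overline{s}$:
\begin{align*}
\real\big(\overline{s}\, a(\Lbfw,\Lbfw)\big) \,=\, \real s\,\big(\Vert\curl\Lbfw\Vert^2_{L^2(\Omega)^3} + |s|^2\Vert\Lbfw\Vert^2_{L^2(\Omega)^3}\big) \,=\, \real s\,\Vert\Lbfw\Vert_{|s|,\Omega}^2 .
\end{align*}
Hence $\real s\,\Vert\Lbfw\Vert_{|s|,\Omega}^2 \le |s|\,|a(\Lbfw,\Lbfw)|$.

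The second step is a perturbation identity. Subtracting the two weak formulations gives, for all $\Lbfphi\in H_0(\curl,\Omega)$,
\begin{align*}
a(\Lbfz_\bfh^*-\Lbfz,\Lbfphi) \,=\, (a-a_\bfh)(\Lbfz_\bfh^*,\Lbfphi).
\end{align*}
Since $J_\bfzeta = I + J_\bfh$, the coefficient matrices $\det(J_\bfzeta)^{-1}J_\bfzeta^\top J_\bfzeta - I$ and $\det(J_\bfzeta)J_\bfzeta^{-1}J_\bfzeta^{-\top} - I$ are bounded in $L^\infty$ by $C\Vert\bfh\Vert_{C^1(D_0)} \le C\Vert\bfh\Vert_{C^1(\Gamma)}$ for $\Vert\bfh\Vert_{C^1}<h_0$. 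They are also supported in $D_0$. Therefore
\begin{align*}
|(a-a_\bfh)(\Lbfw,\Lbfphi)| \,\le\, C\Vert\bfh\Vert_{C^1(\Gamma)}\,\Vert\Lbfw\Vert_{|s|,\Omega}\,\Vert\Lbfphi\Vert_{|s|,\Omega},
\end{align*}
where the $s^2$ term is absorbed by the $|s|$-weighted norm.

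The third step bounds $\Lbfz_\bfh^*$ uniformly in $\bfh$. The form $a_\bfh$ satisfies the same coercivity, with constants uniform for small $\bfh$ because the coefficient matrices are uniformly positive definite. For $\bfh$ small the perturbation is absorbed, giving $\real s\,\Vert\Lbfz_\bfh^*\Vert_{|s|,\Omega}^2 \le C|s|\,|f(\Lbfz_\bfh^*)|$. From the representation of $f$ through $\nabla\chi$ supported in $D_1\setminus\overline{D_0}$ one gets
\begin{align*}
|f(\Lbfphi)| \,\le\, C\,\Vert\LEi\Vert_{H(\curl,D_1\setminus\overline{D_0})}\big(\Vert\curl\Lbfphi\Vert_{L^2} + \Vert\Lbfphi\Vert_{L^2}\big) \,\le\, C\max\{1,|s|^{-1}\}\,\Vert\LEi\Vert\,\Vert\Lbfphi\Vert_{|s|,\Omega}.
\end{align*}
This yields $\Vert\Lbfz_\bfh^*\Vert_{|s|,\Omega}\le C\frac{|s|}{\real s}\Vert\LEi\Vert_{H(\curl,D_1\setminus\overline{D_0})}$, where the constant depends on $\sigma$ through $\max\{1,|s|^{-1}\}\le\max\{1,\sigma^{-1}\}$.

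To conclude, test the perturbation identity with $\Lbfphi=\Lbfz_\bfh^*-\Lbfz$ and use the coercivity estimate:
\begin{align*}
\real s\,\Vert\Lbfz_\bfh^*-\Lbfz\Vert_{|s|,\Omega} \,\le\, C|s|\,\Vert\bfh\Vert_{C^1(\Gamma)}\,\Vert\Lbfz_\bfh^*\Vert_{|s|,\Omega}.
\end{align*}
Inserting the third-step bound gives the factor $|s|^2/(\real s)^2$ claimed in \eqref{eq:contres}.

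The main obstacle is the uniform-in-$\bfh$ coercivity of $a_\bfh$, in particular keeping the $s^2$ coefficient's perturbation from destroying the sign argument. I would treat $a_\bfh = a + (a_\bfh - a)$ and require $C\Vert\bfh\Vert_{C^1(\Gamma)}|s|/\real s<1/2$. This condition is not uniform in $s$, however. If it fails, I would instead argue directly: the real symmetric matrices $\det(J_\bfzeta)^{-1}J_\bfzeta^\top J_\bfzeta$ and $\det(J_\bfzeta)J_\bfzeta^{-1}J_\bfzeta^{-\top}$ are uniformly positive definite, and then $\real(\overline{s}\,a_\bfh(\Lbfw,\Lbfw))\ge c\,\real s\,\Vert\Lbfw\Vert^2_{|s|,\Omega}$ holds exactly as in the first step.
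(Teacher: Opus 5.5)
Your proof is correct and uses the same ingredients as the paper's. These are uniform coercivity of $a_\bfh$, obtained directly from the uniform positive definiteness of $\det(J_\bfzeta)^{-1}J_\bfzeta^\top J_\bfzeta$ and $\det(J_\bfzeta)J_\bfzeta^{-1}J_\bfzeta^{-\top}$; the bound $|(a-a_\bfh)(\Lbfu,\Lbfphi)|\le C\Vert\bfh\Vert_{C^1(\Gamma)}\Vert\Lbfu\Vert_{|s|,\Omega}\Vert\Lbfphi\Vert_{|s|,\Omega}$; and an a priori bound of order $|s|/\real s$. For coercivity of $a_\bfh$ you should keep only your fallback positive-definiteness argument, which is what the paper does; the absorption argument is not uniform in $s$. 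The only difference is that the roles are mirrored: the paper writes $T_\bfh(\Lbfz_\bfh^*-\Lbfz)=(T-T_\bfh)\Lbfz$, inverts the perturbed form and bounds the unperturbed $\Lbfz$, whereas you invert $a$ and bound $\Lbfz_\bfh^*$. You also track the factor $\max\{1,|s|^{-1}\}$ more explicitly than the paper's step \eqref{eq:helpZ}.
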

\begin{proof}
By the Riesz representation theorem, there is a well-defined boundedly invertible operator $T:H_0(\curl, \Omega) \to H_0(\curl, \Omega)$ such that
\begin{align}\label{eq:Riesz1}
\langle T\widehat{\bfu}, \Lbfphi \rangle_{|s|, \Omega} \, = \, a(\Lbfu, \Lbfphi) \quad \text{for all } \Lbfu, \Lbfphi \in H_0(\curl, \Omega) \, .
\end{align}
The scalar product on the left hand side of \eqref{eq:Riesz1} is defined in \eqref{eq:skp}.
Moreover, there exists a bounded and linear operator $T_\bfh: H_0(\curl, \Omega) \to H_0(\curl, \Omega)$ such that
\begin{align*}
\langle T_\bfh\Lbfu, \Lbfphi \rangle_{|s|, \Omega} \, = \, a_\bfh(\Lbfu, \Lbfphi) \quad \text{for all } \Lbfu, \Lbfphi \in H_0(\curl, \Omega) \, .
\end{align*}
Due to \cite[Lem.\@ 3.2]{Hag19}, it holds that
\begin{subequations}\label{eq:taylor}
\begin{align}
\Big\Vert \frac{J_\bfzeta^\top J_\bfzeta}{\det(J_\bfzeta)} - (1-\divop \bfh) I + J_\bfh + J_\bfh^\top \Big\Vert_{L^\infty(\Omega)} \, &\leq \, C \Vert \bfh \Vert_{C^1(\Gamma)}^2\, , \\
\Big\Vert \det(J_\bfzeta) J_\bfzeta^{-1} J_\bfzeta^{-\top} - (1+\divop \bfh) I - J_\bfh - J_\bfh^\top \Big\Vert_{L^\infty(\Omega)} \, &\leq \, C \Vert \bfh \Vert_{C^1(\Gamma)}^2\, .
\end{align}
\end{subequations}
The convergence $J_\bfzeta/(\det(J_\bfzeta))^{1/2} \to I$ and $J_\bfzeta^{-\top}(\det(J_\bfzeta))^{1/2} \to I$ as $\Vert \bfh \Vert_{C^1(\Gamma)} \to 0$ imply that there exist constants $0<c_i$, $i=1,2,3,4$ such that
\begin{align*}
c_1|\bfx| \, \leq \, \left| \frac{J_\bfzeta}{\sqrt{\det(J_\bfzeta)}} \bfx \right| \, \leq c_2|\bfx|\, , \quad
c_3|\bfx| \, \leq \, \left| J_\bfzeta^{-\top} \sqrt{\det(J_\bfzeta)} \bfx \right| \, \leq c_4|\bfx|
\end{align*}
for sufficiently small $\Vert \bfh \Vert_{C^1(\Gamma)}$. Therefore, for any $\Lbfu \in H_0(\curl,\Omega)$ we obtain that
\begin{align*}
\min\{c_1^2, c_3^2 \} \real s \Vert \Lbfu \Vert_{|s|, \Omega}^2 
\, &\leq \, \real s \left( \int_\Omega c_1^2 |\curl \Lbfu|^2 + |s|^2 c_3^2 |\Lbfu|^2 \dx \right) \\
\, &\leq \, \real s \left( \int_\Omega \frac{1}{\det(J_\bfzeta)} \left|J_\bfzeta \curl \Lbfu \right|^2 + s\overline{s} | J_\bfzeta^{-\top} \Lbfu |^2 \det(J_\bfzeta)\dx \right) \\
\, &\leq \, |a_\bfh (\Lbfu, s \Lbfu)| \, = \, |\langle T_\bfh \Lbfu, s \Lbfu \rangle_{|s|, \Omega} | \, \leq \, |s| \left\Vert T_\bfh \Lbfu \right\Vert_{|s|, \Omega} \Vert \Lbfu \Vert_{s, \Omega} \, .
\end{align*}
Therefore, $T_\bfh$ is boundedly invertible and $\Vert T_\bfh^{-1} \Vert_{|s|,\Omega \leftarrow |s|, \Omega} \leq C |s|/\real s$. Here, the operator norm indicates the use of the norm $\Vert \cdot \Vert_{|s|,\Omega}$ for both the domain and range space.
Moreover, we find that
\begin{align*}
&\left\Vert (T_\bfh - T) \Lbfu \right\Vert_{|s|, \Omega}^2 \\
\, &= \, |a_\bfh(\Lbfu, (T_\bfh - T) \Lbfu ) - a(\Lbfu, (T_\bfh - T) \Lbfu)| \\
\, &= \, \left| \int_\Omega \curl \Lbfu^\top \left( \frac{J_\bfzeta^\top J_\bfzeta}{\det(J_\bfzeta)} -I \right) \curl  ((T_\bfh - T) \Lbfu) + s^2 \Lbfu^\top \left(J_\bfzeta^{-1}J_\bfzeta^{-\top} \det(J_\bfzeta) - I \right)  (T_\bfh - T) \Lbfu \dx \right| \\
\, &\leq \, C\Vert \bfh \Vert_{C^1(\Gamma)} \Vert  (T_\bfh - T) \Lbfu \Vert_{|s|,\Omega} \Vert \Lbfu \Vert_{|s|,\Omega} \, .
\end{align*}
This implies that $\Vert T_\bfh - T \Vert_{|s|,\Omega \leftarrow |s|, \Omega} \leq C \Vert \bfh \Vert_{C^1(\Gamma)}$.
Again, by the Riesz representation theorem, the weak formulations for $\Lbfz$ and $\Lbfz_\bfh^*$ are equivalent to the operator equations $T\Lbfz = \bff$ and $T_\bfh \Lbfz_\bfh^* = \bff$ for a $\bff \in H_0(\curl,\Omega)$. 
Since $T_\bfh( \Lbfz_\bfh^* - \Lbfz) = (T-T_\bfh)\Lbfz$ we get that
\begin{align}\label{eq:finlem1}
\Vert  (\LEs_\bfh)^* -  \LEs \Vert_{|s|,\Omega} 
\, \leq \, C \frac{|s|}{\real s} \Vert (T-T_\bfh) \Lbfz\Vert_{|s|,\Omega} \, \leq \, C \frac{|s|}{\real s} \Vert \bfh \Vert_{C^1(\Gamma)} \Vert \Lbfz \Vert_{|s|, \Omega} \, .
\end{align}
Finally, we insert the function $s \Lbfz$ in the weak formulation and find that
\begin{align}\label{eq:helpZ}
\real s \Vert \Lbfz \Vert_{|s|,\Omega}^2 \, = \, \real a(\Lbfz,s\Lbfz)
\, \leq \,  |f(s\Lbfz)| \, \leq \, |s| \Vert \LEi \Vert_{H(\curl, D_1 \setminus \overline{D})}\Vert \Lbfz \Vert_{|s|,\Omega}\, ,
\end{align}
what implies that $\Vert \Lbfz \Vert_{|s|,\Omega} \leq |s|/\real s \Vert \LEi \Vert_{H(\curl, D_1 \setminus \overline{D})} $.
Using this estimate in \eqref{eq:finlem1} yields \eqref{eq:contres}.
\end{proof}

We define the material derivative $\Lbfw \in H_0(\curl,\Omega)$ as the unique solution to 
\begin{align}\label{eq:matder}
a(\Lbfw, \Lbfphi) \, = \, \int_\Omega \curl\Lbfz^\top \left( \divop \bfh I - J_\bfh - J_\bfh^\top \right) \overline{\curl \Lbfphi} - s^2 \Lbfz^\top \left( \divop \bfh I - J_\bfh - J_\bfh^\top \right) \overline{\Lbfphi} \dx
\end{align}
for all $\Lbfphi \in H_0(\curl,\Omega)$. By using \eqref{eq:taylor} we see that the right hand side of \eqref{eq:matder} carries the missing part in the brackets that are required for the second order decay of $|a(\Lbfz_\bfh^* - \Lbfz - \Lbfw, \Lbfphi)|$ with respect to $\Vert \bfh \Vert_{C^1(\Gamma)}$. This is proven in the next lemma.
\begin{lem}
Let $\Lbfz_\bfh^* \in  H_0(\curl,\Omega)$ and $\Lbfz \in H_0(\curl,\Omega)$ be the solutions of $a_\bfh(\Lbfz_\bfh^*, \Lbfphi) = f(\Lbfphi)$ and $a(\Lbfz, \Lbfphi) = f(\Lbfphi)$ for all $\Lbfphi \in H_0(\curl,\Omega)$, respectively and let the material derivative $\Lbfw \in H_0(\curl,\Omega)$ be defined by \eqref{eq:matder}. 
Then, it holds that
\begin{align}\label{eq:wlin}
\Vert \Lbfz_\bfh^* - \Lbfz- \Lbfw \Vert_{|s|,\Omega} \, \leq \, C \frac{|s|^3}{(\real s)^3} \Vert \bfh \Vert_{C^1(\Gamma)}^2 \Vert \LEi \Vert_{H(\curl, D_1\setminus \overline{D})}\, .
\end{align}
\end{lem}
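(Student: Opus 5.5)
Following the previous lemma, I would compute $T(\Lbfz_\bfh^* - \Lbfz - \Lbfw)$ and then apply the bound $\Vert T^{-1}\Vert_{|s|,\Omega\leftarrow|s|,\Omega} \leq C|s|/\real s$. This bound is the case $\bfh=0$ of the coercivity argument for $T_\bfh$. For any $\Lbfphi \in H_0(\curl,\Omega)$ we have $a(\Lbfz,\Lbfphi) = f(\Lbfphi) = a_\bfh(\Lbfz_\bfh^*,\Lbfphi)$, and therefore
\begin{align*}
a(\Lbfz_\bfh^* - \Lbfz - \Lbfw, \Lbfphi) \, = \, \big(a - a_\bfh\big)(\Lbfz_\bfh^*, \Lbfphi) - a(\Lbfw,\Lbfphi) \, .
\end{align*}
Next I would add and subtract $(a-a_\bfh)(\Lbfz,\Lbfphi)$. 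This splits the right-hand side into $(a-a_\bfh)(\Lbfz_\bfh^*-\Lbfz,\Lbfphi)$ plus $\big[(a-a_\bfh)(\Lbfz,\Lbfphi) - a(\Lbfw,\Lbfphi)\big]$.

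The second bracket is where the definition \eqref{eq:matder} of $\Lbfw$ enters. Written out, $(a-a_\bfh)(\Lbfz,\Lbfphi)$ equals
\begin{align*}
\int_\Omega \curl\Lbfz^\top\Big(I - \frac{J_\bfzeta^\top J_\bfzeta}{\det J_\bfzeta}\Big)\overline{\curl\Lbfphi} + s^2 \Lbfz^\top\big(I - \det(J_\bfzeta) J_\bfzeta^{-1}J_\bfzeta^{-\top}\big)\overline{\Lbfphi}\dx \, .
\end{align*}
By the Taylor expansions \eqref{eq:taylor}, the two matrices differ from $\divop\bfh\, I - J_\bfh - J_\bfh^\top$ and $-(\divop\bfh\, I - J_\bfh - J_\bfh^\top)$, respectively, by terms of size $O(\Vert\bfh\Vert_{C^1(\Gamma)}^2)$ in $L^\infty$. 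Subtracting the right-hand side of \eqref{eq:matder} therefore leaves only these quadratic remainders. Cauchy--Schwarz, applied separately to the curl part and to the $|s|^2$-weighted $L^2$ part, then bounds the bracket by $C\Vert\bfh\Vert^2_{C^1(\Gamma)}\Vert\Lbfz\Vert_{|s|,\Omega}\Vert\Lbfphi\Vert_{|s|,\Omega}$.

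For the first term I would use the same estimate as for $\Vert T_\bfh - T\Vert$ in the previous lemma. It gives $|(a-a_\bfh)(\Lbfz_\bfh^*-\Lbfz,\Lbfphi)| \leq C\Vert\bfh\Vert_{C^1(\Gamma)}\Vert \Lbfz_\bfh^*-\Lbfz\Vert_{|s|,\Omega}\Vert\Lbfphi\Vert_{|s|,\Omega}$. By \eqref{eq:finlem1} together with \eqref{eq:helpZ}, this is at most $C\frac{|s|^2}{(\real s)^2}\Vert\bfh\Vert^2\Vert\LEi\Vert\,\Vert\Lbfphi\Vert_{|s|,\Omega}$. The second term contributes $C\frac{|s|}{\real s}\Vert\bfh\Vert^2\Vert\LEi\Vert\,\Vert\Lbfphi\Vert_{|s|,\Omega}$ by \eqref{eq:helpZ}. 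Since $|s|/\real s\geq 1$, both contributions are dominated by $C\frac{|s|^2}{(\real s)^2}\Vert\bfh\Vert^2_{C^1(\Gamma)}\Vert\LEi\Vert_{H(\curl,D_1\setminus\overline D)}$.

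Taking $\Lbfphi = T(\Lbfz_\bfh^*-\Lbfz-\Lbfw)$ gives this bound for $\Vert T(\Lbfz_\bfh^*-\Lbfz-\Lbfw)\Vert_{|s|,\Omega}$. Multiplying by $\Vert T^{-1}\Vert\leq C|s|/\real s$ then yields the factor $|s|^3/(\real s)^3$ claimed in \eqref{eq:wlin}.

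I expect the main obstacle to be the bookkeeping of signs and transposes. One has to check that the right-hand side of \eqref{eq:matder} matches the first-order parts of $a-a_\bfh$ exactly, including the sign flip between the curl term and the mass term and the transposition convention $\Lbfz^\top(\cdot)\overline{\Lbfphi}$. Only then does the remainder cancel to second order. A smaller point is that $\Lbfw$ is well defined in $H_0(\curl,\Omega)$, since its right-hand side is a bounded antilinear functional and $T$ is invertible.
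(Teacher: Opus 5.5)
Your proposal is correct and is essentially the paper's proof. Both split $a(\Lbfz_\bfh^*-\Lbfz-\Lbfw,\Lbfphi)$ into a first-order term acting on $\Lbfz_\bfh^*-\Lbfz$, controlled via \eqref{eq:contres}, plus second-order Taylor remainders acting on $\Lbfz$, controlled via \eqref{eq:helpZ}, and both close with the coercivity estimate $\real s\,\Vert\Lbfphi\Vert_{|s|,\Omega}^2\le|s|\,|a(\Lbfphi,\Lbfphi)|$, which you phrase equivalently as $\Vert T^{-1}\Vert\le|s|/\real s$.

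The sign check you flag does go through with the true expansions $J_\bfzeta^\top J_\bfzeta/\det(J_\bfzeta)=(1-\divop\bfh)I+J_\bfh+J_\bfh^\top+O(\Vert\bfh\Vert_{C^1(\Gamma)}^2)$ and $\det(J_\bfzeta)J_\bfzeta^{-1}J_\bfzeta^{-\top}=(1+\divop\bfh)I-J_\bfh-J_\bfh^\top+O(\Vert\bfh\Vert_{C^1(\Gamma)}^2)$. Note that \eqref{eq:taylor}, read literally, has the sign of $J_\bfh+J_\bfh^\top$ flipped in both lines.
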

\begin{proof}
By using some simple rearrangements, it can be seen that
\begin{align*}
a(\Lbfz_\bfh^*- \Lbfz- \Lbfw, \Lbfphi) \, &= \,
a(\Lbfz_\bfh^* , \Lbfphi) -
a_\bfh(\Lbfz_\bfh^*, \Lbfphi)-
a(\bfw, \Lbfphi) \\
\, &= \, \int_\Omega \bigg(\Big( \curl \Lbfz_\bfh^{* \top} - \curl \Lbfz^{\top}  \Big)\Big(I - \frac{J_\bfzeta^\top J_\bfzeta}{\det(J_\bfzeta)} \Big) \overline{\curl \Lbfphi} \\
& \phantom{= \, \int_\Omega}
+s^2( \Lbfz_\bfh^{* \top} - \Lbfz^{\top}  ) \Big( I - J_\bfzeta^\top J_\bfzeta \det(J_\bfzeta) \Big) \overline{\Lbfphi} \\
& \phantom{= \, \int_\Omega}
-\curl \Lbfz^\top \Big( -I + \frac{J_\bfzeta^\top J_\bfzeta}{\det(J_\bfzeta)} + \divop \bfh I - J_\bfh - J_\bfh^\top \Big) \overline{\curl \Lbfphi} \\
& \phantom{= \, \int_\Omega}
+ s^2 \Lbfz^\top ( I - J_\bfzeta^\top J_\bfzeta \det(J_\bfzeta) + \divop \bfh I - J_\bfh - J_\bfh^\top) \overline{\Lbfphi} \bigg) \dx\, .
\end{align*}
Next, we apply the triangle inequality, use the linearization formulas in \eqref{eq:taylor} and make use of the auxiliary estimate in \eqref{eq:helpZ} and find that
\begin{align*}
&|a(\Lbfz_\bfh^*- \Lbfz- \Lbfw, \Lbfphi)| 
\, \leq \, \Vert \curl (\Lbfz_\bfh^{*} - \Lbfz) \Vert_{L^2(\Omega)^3} \Big\Vert I - \frac{J_\bfzeta^\top J_\bfzeta}{\det(J_\bfzeta)} \Big\Vert_{L^\infty(\Omega)} \Vert \curl\Lbfphi \Vert_{L^2(\Omega)^3} \\
&\phantom{\qquad \qquad \qquad \leq \,} + \Vert s(\Lbfz_\bfh^{*} - \Lbfz )\Vert_{L^2(\Omega)^3} \Vert I - J_\bfzeta^\top J_\bfzeta \det(J_\bfzeta) \Vert_{L^\infty(\Omega)}\Vert s\Lbfphi \Vert_{L^2(\Omega)^3} \\
&\phantom{\qquad \qquad \qquad\leq \,} + \Vert \curl \Lbfz \Vert_{L^2(\Omega)^3} \Big\Vert -I + \frac{J_\bfzeta^\top J_\bfzeta}{\det(J_\bfzeta)} + \divop \bfh I - J_\bfh - J_\bfh^\top \Big\Vert_{L^\infty(\Omega)}\Vert \curl \Lbfphi \Vert_{L^2(\Omega)^3} \\
&\phantom{\qquad \qquad \qquad \leq \,} + \Vert s \Lbfz\Vert_{L^2(\Omega)^3} \Vert I - J_\bfzeta^\top J_\bfzeta \det(J_\bfzeta) + \divop \bfh I - J_\bfh - J_\bfh^\top \Vert_{L^\infty(\Omega)} \Vert s \Lbfphi \Vert_{L^2(\Omega)^3} \\
&\phantom{|a(\Lbfz_\bfh^*- \Lbfz- \Lbfw, \Lbfphi)| } \leq \, C \frac{|s|^2}{(\real s)^2} \Vert \bfh \Vert_{C^1(\Gamma)}^2 \Vert \Lbfphi \Vert_{|s|,\Omega} \Vert \LEi \Vert_{H(\curl, D_1\setminus \overline{D})} \, .
\end{align*}
Finally, the inequality $\real s\Vert \Lbfphi \Vert_{|s|,\Omega}^2 \leq |s||a(\Lbfphi, \Lbfphi)|$ implies \eqref{eq:wlin}.
\end{proof}

The next proposition is about the domain derivative corresponding to the Laplace domain Maxwell's equations.
\begin{proposition}
Let $D$ be a bounded $C^2$ domain, $\partial D = \Gamma$ and let $\LEs \in H^1(\Omega)^3$ be the unique solution to \eqref{eq:LMWEs}. Then, the domain derivative in the Laplace domain is given by the solution $\LEt' \in H(\curl,\Omega)$ to
\begin{subequations}\label{eq:ddmw}
\begin{align}
\curl^2 \LEt' + s^2 \LEt' \, = \, 0 \quad &\text{in } \Omega\, ,\\
\bfnu \times \LEt' \, = \, - \scurl \big( \bfh_\bfnu \LEt_\bfnu \big) - \bfh_\bfnu \gamma_T \curl \LEt \quad &\text{on } \Gamma \, ,
\end{align}
\end{subequations}
where $\LEt = \LEs + \LEi \in H^1(D_0\setminus \overline{D})$.
It holds that $\widehat{F}_\Gamma'(0)\bfh = (\LEt'(\bfz_j))_{j=1,\dots,M}$; more precisely, there holds the bound
\begin{align}\label{eq:ddprop}
\left| \widehat{F}_\Gamma(\bfh) - \widehat{F}_\Gamma(0) - (\LEt'(\bfz_j))_{j=1,\dots,M} \right| \, \leq \, C |s|^5\rme^{-\mathrm{d}_\bfz \real s} \Vert \bfh \Vert_{C^1(\Gamma)}^2 \Vert {\LEi} \Vert_{H(\curl, D_1\setminus \overline{D})}
\end{align}
for $\real s > \sigma$, where we abbreviated  $\mathrm{d}_\bfz = \text{dist}(\bfz, \partial D_1)$ and where $C = C(\sigma, \mathrm{d}_\bfz)$.
\end{proposition}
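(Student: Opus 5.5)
The plan follows the standard material-derivative route of \cite[Ch.~3]{Hag19}, keeping track of the dependence on $s$ and on $\LEi$. The steps are: localize the linearization on the cut-off annulus, identify the material derivative there with a function $\LEt'$, show that $\LEt'$ solves \eqref{eq:ddmw}, and pass to pointwise bounds.

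\textbf{Step 1: localization.} Fix $\bfh$ with $\Vert\bfh\Vert_{C^1(\Gamma)}$ small. Since $\bfh$ vanishes outside $D_0$, we have $\bfzeta=\mathrm{id}$ there, and hence $\Lbfz_\bfh^*=\Lbfz_\bfh$ on $\Omega\setminus\overline{D_0}$. In particular $(\LEs_\bfh)^*=\LEs_\bfh$ in a neighborhood of $\overline{D_1}\setminus D_0$ and of the points $\bfz_j$.

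\textbf{Step 2: the candidate derivative.} Define $\LEt':=\Lbfw-(\bfh\cdot\nabla)\Lbfz-J_\bfh^\top\Lbfz$. This is the standard relation between the curl-conforming material derivative and the shape derivative, since $\Lbfz_\bfh^*(\bfx)=J_\bfzeta^\top\Lbfz_\bfh(\bfzeta(\bfx))$. It is well defined because $\Lbfz\in H^1(\Omega)^3$ by Lemma~\ref{lem:reg}. Outside $D_0$ the correction terms vanish, so $\LEt'=\Lbfw$ there. Combining Step~1 with \eqref{eq:wlin} then gives
\[
\Vert \LEs_\bfh-\LEs-\LEt'\Vert_{|s|,\Omega\setminus\overline{D_0}}\leq C\frac{|s|^3}{(\real s)^3}\Vert\bfh\Vert_{C^1(\Gamma)}^2\Vert\LEi\Vert_{H(\curl,D_1\setminus\overline{D})}.
\]

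\textbf{Step 3: identifying the boundary value problem.} I would verify that $\LEt'$ solves \eqref{eq:ddmw}. Inside $\Omega$, test \eqref{eq:matder} with $\Lbfphi\in C_c^\infty(\Omega)^3$ and integrate by parts, using $\curl^2\Lbfz+s^2\Lbfz=0$ near $\Gamma$ (because $\chi=1$ on $D_0$). This shows that $\LEt'$ satisfies the Maxwell system in $\Omega$.

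For the boundary condition, $\Lbfw\in H_0(\curl,\Omega)$, so $\bfnu\times\LEt'=-\bfnu\times\big((\bfh\cdot\nabla)\LEt+J_\bfh^\top\LEt\big)$ on $\Gamma$. Here I use that $\Lbfz=\LEt$ near $\Gamma$ and that $\gamma_t\LEt=0$. Decompose $\bfh=\bfh_\bfnu\bfnu+\bfh_T$. The tangential part drops out, since differentiating $\gamma_t\LEt=0$ along $\Gamma$ gives it zero contribution. The normal part is rewritten through the identity $\bfnu\times(\partial_\bfnu\LEt+\nabla(\bfnu\cdot\LEt)\ldots)$ together with $\curl\LEt$. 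Following \cite[Thm.~3.5]{Hag19}, this produces $-\scurl(\bfh_\bfnu\LEt_\bfnu)-\bfh_\bfnu\gamma_T\curl\LEt$.

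I expect this trace computation to be the main obstacle. It needs enough regularity of $\LEt$ near $\Gamma$ (Lemma~\ref{lem:reg} and Remark~\ref{rem:regcurl} for $C^2$ boundaries) to make the traces meaningful in $\HdivO$. Where regularity is insufficient, I would argue by density, approximating $\bfh$ and the data by smooth ones.

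\textbf{Step 4: pointwise bound.} On $\Omega\setminus\overline{D_0}$, apply the Stratton--Chu formula \eqref{eq:Stratton-Chu} to the radiating Maxwell solution $\bfe:=\LEs_\bfh-\LEs-\LEt'$, with the integral taken over $\partial D_1$ (or over the boundary of an intermediate domain between $D_0$ and $D_1$). This represents $\bfe(\bfz_j)$ through $\Scal$ and $\Dcal$ applied to $\gamma_t\bfe$ and $\frac1s\gamma_t\curl\bfe$ on that surface.

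Lemma~\ref{lem:pw} bounds each potential evaluation by $C|s|^2\rme^{-\mathrm{d}_\bfz\real s}$ times the $\HdivO$ norm of the trace. The trace theorem bounds these norms by $\Vert\bfe\Vert_{H(\curl)}$ on a neighborhood of the surface, at a cost of at most $|s|$ from the $\frac1s\curl$ term together with $\curl^2\bfe=-s^2\bfe$. Inserting Step~2 and absorbing the powers of $(\real s)^{-1}$ into $C(\sigma,\cdot)$ yields a total $s$-power of at most $|s|^5$, which is \eqref{eq:ddprop}.
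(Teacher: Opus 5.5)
Your proposal is correct and follows the paper's proof essentially step by step: the same candidate $\LEt'=\Lbfw-J_\bfh^\top\Lbfz-J_{\Lbfz}\bfh$, the same weak verification of the Maxwell system and of the boundary trace via \cite{Hag19}, and the same Stratton--Chu argument on $\partial D_1$ combined with Lemma~\ref{lem:pw} and \eqref{eq:wlin}.

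Two minor points. First, the trace step you flag as the main obstacle follows directly from $J_\bfh^\top\Lbfz+J_{\Lbfz}\bfh=\nabla(\bfh\cdot\Lbfz)+\curl\Lbfz\times\bfh$ together with $\bfnu\times\Lbfz=0$ and $\bfnu\cdot\curl\Lbfz=0$ on $\Gamma$, so no splitting of $\bfh$ is needed. Second, to land on $|s|^5$ rather than $|s|^6$, bound the traces directly by $\Vert\bfe\Vert_{|s|,\Omega\setminus\overline{D_0}}$, using $\frac{1}{|s|}\Vert\curl\bfe\Vert_{H(\curl,\Omega\setminus\overline{D_0})}\le\max\{1,|s|^{-1}\}\Vert\bfe\Vert_{|s|,\Omega\setminus\overline{D_0}}$ (a consequence of $\curl^2\bfe=-s^2\bfe$). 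Charging an extra factor $|s|$ against $\Vert\bfe\Vert_{H(\curl)}$, as your write-up does, would give $|s|^6$.
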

\begin{proof}
The proof consists of showing that $\LEt' = \Lbfw - J_\bfh^\top \Lbfz - J_{\Lbfz}\bfh$, where $\Lbfw \in H_0(\curl,\Omega)$ is the material derivative defined in \eqref{eq:matder}, $\Lbfz$ is given by $\LEs + \LEi\chi$ and $J_\bfa$ denotes the Jacobian of the vector valued quantity $\bfa \in H^1(\Omega)^3$. To prove this identity, we show that
\begin{subequations}
\begin{align}
a(\Lbfw - J_\bfh^\top \Lbfz - J_{\Lbfz}\bfh, \Lbfphi ) \,&=\, 0\quad  \text{for all } \Lbfphi \in H_0(\curl,\Omega)\, , \label{eq:ida} \\ 
\bfnu \times (\Lbfw - J_\bfh^\top \Lbfz - J_{\Lbfz}\bfh )\, &= \, \bfnu \times \LEt' \quad \text{on } \Gamma \, . \label{eq:boundcd}
\end{align}
\end{subequations}
To show the property \eqref{eq:ida} we first show some useful identities.
By using Green's theorem (see e.g.\@ \cite[Thm.\@ 3.24]{Mon03}) one finds that
\begin{align}\label{eq:normaldrop}
\bfnu \cdot \curl \Lbfphi \, = \, 0 \quad \text{in } H^{-1/2}(\Gamma) \quad \text{for any } \Lbfphi \in H_0(\curl,\Omega) \, .
\end{align}
Moreover, for $\bfa \in H^1(\Omega)^3$, it holds that $\curl \bfa \times \bfh = (J_{\bfa} - J_{\bfa}^\top) \bfh$ in $L^2(\Omega)^3$, what follows from straightforward computations.
From the proof of \cite[Thm.\@ 3.6, Eq.\@ (3.1.8)]{Hag19} we see that
\begin{align}\label{eq:help1}
\curl \big( J_\bfh^\top \Lbfz + J_{\Lbfz}\bfh \big) \, = \, \curl \Lbfz \divop \bfh + J_{\curl \Lbfz} \bfh - J_\bfh \curl \Lbfz\, .
\end{align}
Note that due to Remark \ref{rem:regcurl}, $\curl \Lbfz \in H^1(\Omega)$ and $J_{\curl \Lbfz}$ is well-defined. Furthermore, straightforward computations show that
\begin{align*}
\curl^2 \Lbfz \, = \, -s^2 \Lbfz - \LEi\Delta \chi + J_{\nabla \chi} \LEi - J_{\LEi}\nabla \chi + \nabla \chi \times \curl \LEi
\end{align*}
and due to the disjoint support of $\bfh$ and $\nabla \chi$, there holds that $\curl^2 \Lbfz \times \bfh = -s^2 \Lbfz \times \bfh$.
Now, as in the proof of \cite[Thm.\@ 3.6, p.\@ 48]{Hag19} one sees that
\begin{align}\label{eq:help2}
J_\bfh^\top \curl \Lbfz + J_{\curl \Lbfz}\bfh \, = \, \curl^2 \Lbfz \times \bfh + \nabla(\curl \Lbfz \cdot \bfh) \, = \, -s^2 \Lbfz \times \bfh + \nabla(\curl \Lbfz \cdot \bfh) \, .
\end{align}
We now use the definition of $\Lbfw$ in \eqref{eq:matder}, the definition of $a$ from \eqref{def:a} together with \eqref{eq:help1} and \eqref{eq:help2} and find that for any $\Lbfphi \in C_0^\infty(D,\R^3)$
\begin{align*}
&a\big(\Lbfw - J_\bfh^T\Lbfz - J_{\Lbfz}\bfh, \Lbfphi \big) \\
\, &= \,  \int_\Omega \curl\Lbfz^\top \left( \divop \bfh I - J_\bfh - J_\bfh^\top \right) \overline{\curl \Lbfphi} - s^2 \Lbfz^\top \left( \divop \bfh I - J_\bfh - J_\bfh^\top \right) \overline{\Lbfphi} \dx \\
\, &\phantom{= \, } - \left( \int_\Omega \curl \big( J_\bfh^T\Lbfz + J_{\Lbfz}\bfh \big) \cdot \overline{\curl \Lbfphi} + s^2 \big( J_\bfh^T\Lbfz + J_{\Lbfz}\bfh \big) \cdot \overline{\Lbfphi} \dx \right) \\
\, &= \, \int_\Omega - \big( J_\bfh^\top \curl \Lbfz + J_{\curl \Lbfz}\bfh \big) \cdot \overline{\curl \Lbfphi} - s^2 \big( \Lbfz \divop \bfh + J_{\Lbfz}\bfh - J_\bfh\Lbfz \big) \cdot \overline{\Lbfphi} \dx \\
\, &= \, \int_\Omega - ( -s^2 \Lbfz \times \bfh + \nabla(\curl \Lbfz \cdot \bfh) ) \cdot \overline{\curl \Lbfphi} - s^2 \curl(\Lbfz \times \bfh) \cdot \overline{\Lbfphi} \dx \\
\, &= \,  \int_\Omega \divop \left( s^2 \overline{\Lbfphi} \times \big( \Lbfz \times \bfh \big) - \big(\curl \Lbfz \cdot \bfh \big) \overline{\curl \Lbfphi} \right) \dx \\
\, &= \, \int_{\Gamma} s^2 \left( \overline{\Lbfphi} \times \big( \Lbfz \times \bfh \big) \right) \cdot \bfnu - \big( \curl \Lbfz \cdot \bfh \big) \big( \overline{ \curl\Lbfphi} \cdot \bfnu \big) \ds \, = \, 0\, .
\end{align*}
The last equality follows since $\curl \overline{\Lbfphi} \cdot \bfnu = 0$ (see \eqref{eq:normaldrop}) and 
\begin{align*}
\left( \overline{\Lbfphi} \times \big( \Lbfz \times \bfh \big) \right) \cdot \bfnu \, = \, 
 \left( \bfnu \times \overline{\Lbfphi} \right) \cdot \big( \Lbfz \times \bfh \big) \, = \, 0 \quad \text{for any } \Lbfphi \in C_0^\infty(\Omega,\R^3)\, .
\end{align*}
Furthermore, note that $\bfh \in C_0^1(\Omega,\R^3) \subset W_{\text{loc}}^{1,\infty}(\Omega)^3$ and the regularities of both $\Lbfz$ and $\curl \Lbfz$ imply that $\Lbfz \times \bfh \in H^1(\Omega)^3$, $\curl\Lbfz \cdot \bfh \in H^1(\Omega)$. 
Thus, by the density of $C_0^\infty(\Omega,\R^3)$ in $H_0(\curl, \Omega)$ (see, e.g.\@, \cite[Def.\@ 4.19]{KiHe15}) and the continuity of $a$, \eqref{eq:ida} follows.
To show the boundary condition in \eqref{eq:boundcd}, we use that $\Lbfw \in H_0(\curl, \Omega)$ and find as in the proof of \cite[Thm.\@ 3.6, p.\@ 46]{Hag19} that
\begin{align*}
\bfnu \times \big(\Lbfw - J_\bfh^\top \Lbfz - J_{\Lbfz}\bfh \big) \, = \, - \bfnu \times \big(\curl \Lbfz \times \bfh \big) - \bfnu \times  \nabla \big( \bfh \cdot \Lbfz \big)  \, .
\end{align*}
Moreover, by using the density of $C^\infty(\overline{\Omega})$ in $H^1(\Omega)$ and the fact that $\curl \Lbfz$ coincides with its tangential component (see \eqref{eq:normaldrop}) one finds that
\begin{align*}
- \bfnu \times \big(\curl \Lbfz \times \bfh \big) \, = \, -\bfh_\bfnu \gamma_T \curl \Lbfz \quad \text{and } - \bfnu \times \nabla \big( \bfh \cdot \Lbfz \big)  \, = \, -\scurl \big(\bfh_\bfnu \Lbfz_\bfnu\big)\, .
\end{align*}
The fact that $\Lbfz$ coincides with $\LEs + \LEi$ in a neighborhood of $\Gamma$ now yields the condition in \eqref{eq:boundcd}.

The weak formulations for $\Lbfz_\bfh^*$ and ${\Lbfz}$ and the definition of $\Lbfw$ from \eqref{eq:matder} imply that $\curl \Lbfv \in H(\curl,\Omega\setminus\overline{D_1})$ and $\curl^2 \Lbfv + s^2 \Lbfv = 0$ in $\Omega\setminus \overline{D_1}$ for $\Lbfv \in \{\Lbfz_\bfh^*, \Lbfz, \Lbfw \}$.
Now, we apply the definition of $F_\Gamma$ from \eqref{eq:FhatOp}, the fact that for $\bfz \in \Omega\setminus \overline{D}$ there holds that $\Lbfz_\bfh^*(\bfz) = \widehat{\Es_\bfh}^*(\bfz)$, $\Lbfz(\bfz) = \LEs(\bfz)$ and $\LEt'(\bfz) = \Lbfw(\bfz)$ as well as the Stratton-Chu formula \eqref{eq:Stratton-Chu} (integrating over $\partial D_1$) together with the boundedness of the trace $\gamma_t$, the pointwise bounds in \eqref{lem:pw} and the bound \eqref{eq:wlin}
\begin{align*}
&\big|\LEs_\bfh(\bfz) - \LEs(\bfz) - \LEt'(\bfz)\big| 
= \, \big|\Lbfz_\bfh^*(\bfz) - \Lbfz(\bfz) - \Lbfw(\bfz)\big| \\
&= \, \big| \big(\Dcal(s) \gamma_t\big(\Lbfz_\bfh^* - \Lbfz - \Lbfw \big) \big) (\bfz) - \frac{1}{s}\big(\Scal(s) \gamma_t\big(\curl \big(\Lbfz_\bfh^* - \Lbfz - \Lbfw \big) \big) \big) (\bfz)  \big| \\
& \leq \, C |s|^2 \rme^{-\mathrm{d}_\bfz \real s} \Big( \big\Vert \Lbfz_\bfh^* - \Lbfz - \Lbfw  \big\Vert_{H(\curl,\Omega\setminus\overline{D_1})} + \frac{1}{|s|}\big\Vert \curl\big(\Lbfz_\bfh^* - \Lbfz - \Lbfw \big) \big\Vert_{H(\curl,\Omega\setminus\overline{D_1})} \Big) \\
&\leq \, C |s|^5\rme^{-\mathrm{d}_\bfz \real s} \Vert \bfh \Vert_{C^1(\Gamma)}^2 \Vert {\LEi} \Vert_{H(\curl, D_1\setminus \overline{D})}\, ,
\end{align*}
where we abbreviated  $\mathrm{d}_\bfz = \text{dist}(\bfz, \partial D_1)$ and $C = C(\sigma, \mathrm{d}_\bfz)$.
\end{proof}

\subsection{Boundary integral formulation for the domain derivative}\label{subsec:bi}
In order to derive a suitable boundary integral equation for the domain derivative we first introduce the operator $\Lambda_\mathcal{O} : H^{3/2}(\Gamma)^3 \to H^1(\mathcal{O} )^3$, that maps $\Lbfg \mapsto \Lbfv$, where $\Lbfv \in H^1(\mathcal{O})^3$ is the unique solution to \eqref{eq:MWV} (see also Lemma~\ref{lem:reg}).
By $\gamma_n^\pm$ we denote the normal trace, which maps from $H^1(\Omega)^3$ and $H^1(D)^3$ to $H^{1/2}(\Gamma)$ respectively. Analogously, we understand $\gamma_T^\pm$, the projections onto the tangential plane.
We define the operator $L(s) : H^{3/2}(\Gamma)^3 \to \HdivO$ via
\begin{align}\label{eq:defL}
  L(s)\widehat{\bfa} \, := \, 
-\big(\scurl\big( \bfh_\bfnu ( \gamma_n^- \Lambda_D + \gamma_n^+ \Lambda_\Omega) \widehat{\bfa} \big) + \bfh_\bfnu (\gamma_T^- \curl \Lambda_D + \gamma_T^+ \curl \Lambda_\Omega)\widehat{\bfa} \big) \, ,
\end{align}
as well as the operator $\Fcal(s) : H^{3/2}(\Gamma)^3 \to H(\curl,\Omega)$ given by
\begin{align}\label{eq:Fcal}
\Fcal(s) := \Scal(s) \Vop^{-1}(s) L(s) \, .
\end{align}
By the uniqueness of the interior Maxwell problem together with our standing assumption that $\LEi$ is a solution to time harmonic Maxwell's equation in $D$, it holds that $\Lambda_D \gamma_t \LEi = \LEi$ in $D$.
Therefore, the operator $\Fcal(s)$ maps $\gamma_t \LEi$ directly to the domain derivative $\LEp$ from \eqref{eq:ddmw}, i.e., $\Fcal(s)\gamma_t\LEi = \LEp$ in $\Omega$.
In the next lemma we derive $s$-dependent bounds for the operator $\Fcal(s)$.
\begin{lem}\label{lem:Fbounds}
Let $\real s \geq \sigma  >0$, let $\Gamma$ be of class $C^2$ and let $\bfh \in C^1(\Gamma, \R^3)$.
For the operator $\Fcal(s)$ from \eqref{eq:Fcal} there holds that
\begin{align}\label{eq:Fcalbound}
\Vert \Fcal(s) \Vert_{H(\curl, \Omega) \leftarrow H^{3/2}(\Gamma)^3} \, \leq \, C_\sigma \frac{|s|^5}{(\real s)^2} \, .
\end{align}
Moreover, for $\bfx \in \Omega$ and $d_\bfx=\mathrm{dist}(\bfx,\Gamma)$ it holds that
\begin{align}\label{eq:Fcalpwbound}
\Vert \Fcal(s)\cdot (\bfx) \Vert_{\C^3 \leftarrow H^{3/2}(\Gamma)^3} \, \leq \, C(\sigma,\mathrm{d}_\bfx) \rme^{- \mathrm{d}_\bfx \real s}|s|^7\, , 
\end{align}
where $C(\sigma,\mathrm{d}_\bfx)$ does not depend on $s$ but on $\sigma$ and on $\mathrm{d}_\bfx^{-1}$.
\end{lem}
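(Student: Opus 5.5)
The plan is to factor $\Fcal(s)=\Scal(s)\Vop^{-1}(s)L(s)$ and bound each factor in $s$-explicit norms, then multiply the bounds. For the composition $\Scal(s)\Vop^{-1}(s):\HdivO\to H(\curl,\Omega)$ we already have, from \eqref{eq:Vest} (equivalently \cite[Prop.\@ 6.5]{BanSay22}), the bound $C|s|^2/\real s\,\max\{1,|s|^{-2}\}\le C_\sigma |s|^2/\real s$ for $\real s\ge\sigma$. Hence \eqref{eq:Fcalbound} reduces to showing
\begin{align*}
\Vert L(s)\Vert_{\HdivO\leftarrow H^{3/2}(\Gamma)^3}\,\le\, C_\sigma\frac{|s|^3}{\real s}\, .
\end{align*}

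To bound $L(s)$, I treat its two terms separately. For the $\scurl$ term, I use that $\scurl:H^{1/2}(\Gamma)\to\HdivO$ is bounded. Multiplication by $\bfh_\bfnu$ is bounded on $H^{1/2}(\Gamma)$, since $\bfh\in C^1$ and $\bfnu\in C^1$ for a $C^2$ boundary. The normal traces $\gamma_n^\pm:H^1\to H^{1/2}(\Gamma)$ are bounded. Lemma~\ref{lem:reg} then gives $\Vert\Lambda_{\mathcal O}\widehat{\bfa}\Vert_{H^1}\le C|s|^2/\real s\,\Vert\widehat{\bfa}\Vert_{H^{1/2}}\le C|s|^2/\real s\,\Vert\widehat{\bfa}\Vert_{H^{3/2}}$. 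For the $\gamma_T\curl$ term, I first embed $\HdivO$-norm $\lesssim$ $H^{1/2}$-type norm. The one place to be careful is that $\sdiv$ of the product must lie in $H^{-1/2}$. Here I use $\sdiv(\bfh_\bfnu\gamma_T\bfu)$ together with the fact that $L^2_t$-valued $H^{1/2}$ fields with $H^{1/2}$ coefficients have $\sdiv$ in $H^{-1/2}$, again exploiting that $\bfh_\bfnu$ is only $C^1$. Then $\gamma_T^\pm:H^1\to H^{1/2}$ is bounded, and \eqref{eq:RegBound2} of Remark~\ref{rem:regcurl} gives $\Vert\curl\Lambda_{\mathcal O}\widehat{\bfa}\Vert_{H^1}\le C|s|^3/\real s\,\Vert\widehat{\bfa}\Vert_{H^{3/2}}$. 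Collecting both terms gives the claimed $|s|^3/\real s$ (using $|s|\ge\sigma$ to absorb the $|s|^2$ term), and hence \eqref{eq:Fcalbound}.

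For the pointwise bound \eqref{eq:Fcalpwbound}, I write $\Fcal(s)\widehat{\bfa}(\bfx)=(\Scal(s)\Vop^{-1}(s)L(s)\widehat{\bfa})(\bfx)$ and apply Lemma~\ref{lem:pw} to the outer single layer potential. This gives a factor $C(\sigma,d_\bfx)|s|^2\rme^{-d_\bfx\real s}$ times $\Vert\Vop^{-1}(s)L(s)\widehat{\bfa}\Vert_{\HdivO}$. By \eqref{eq:Vcoerc}, $\Vop^{-1}$ contributes $C_\sigma|s|^2/\real s$, and $L(s)$ contributes $|s|^3/\real s$. Using $\real s\ge\sigma$ to replace $(\real s)^{-2}$ by $\sigma^{-2}$, the total is $|s|^7\rme^{-d_\bfx\real s}$ up to a constant depending on $\sigma$ and $d_\bfx^{-1}$.

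The main obstacle is the $\HdivO$ estimate for the second term of $L(s)$ when $\bfh_\bfnu$ is only $C^1$ and $\Gamma$ only $C^2$. Note that Remark~\ref{rem:regcurl} as stated uses a boundedness of $\sdiv$ on $H^{3/2}_t$, which may tacitly require more smoothness; I would accept it as given. If the product estimate is delicate, I would instead identify $\bfh_\bfnu\gamma_T\curl\bfv$ with $-\bfnu\times(\curl\bfv\times\bfh)$ as in the proof of the preceding proposition. I would then bound it as the tangential trace $\gamma_t$ of the $H(\curl)$-field $\curl\bfv\times\bfh$, whose $H(\curl)$ norm is controlled by $\Vert\curl\bfv\Vert_{H^1}$ and $\Vert\bfh\Vert_{C^1}$. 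That trace argument, through the continuity of $\gamma_t:H(\curl)\to\HdivO$, avoids surface-product estimates entirely, and I would use it as the primary route.
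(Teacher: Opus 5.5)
Your bulk estimate \eqref{eq:Fcalbound} follows the paper's proof. You apply \eqref{eq:Vest} to $\Scal(s)\Vop^{-1}(s)$ and then bound $L(s)$ term by term via Lemma~\ref{lem:reg} and Remark~\ref{rem:regcurl}. For the second term, the first route you sketch is the paper's: $\bfh_\bfnu\in C^1$ is a multiplier on $H^{1/2}(\Gamma)$, combined with $H_t^{1/2}(\Gamma)\subset\HdivO$. That route is sound.

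The route you designate as primary does not work as stated. Since $\bfnu\times(\bfu\times\bfh)=\bfh_\bfnu\bfu-(\bfnu\cdot\bfu)\bfh$, one has
\begin{align*}
\bfnu\times(\curl\Lbfv\times\bfh) \, = \, \bfh_\bfnu\gamma_T\curl\Lbfv-(\bfnu\cdot\curl\Lbfv)\,(\bfh-\bfh_\bfnu\bfnu)\, .
\end{align*}
So the identification needs $\bfnu\cdot\curl\Lbfv=0$ on $\Gamma$. In the preceding proposition this came from $\Lbfz\in H_0(\curl,\Omega)$, see \eqref{eq:normaldrop}. Here $\Lbfv=\Lambda_{\mathcal O}\widehat{\bfa}$ satisfies $\bfnu\times\Lbfv=\widehat{\bfa}$, so by Remark~\ref{rem:regcurl} $\bfnu\cdot\curl\Lbfv=\sdiv\widehat{\bfa}$, which is nonzero in general. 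Your identity therefore drops the term $(\sdiv\widehat{\bfa})(\bfh-\bfh_\bfnu\bfnu)$; your minus sign is also off, though that is harmless for norms.

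The repair is easy. Replace $\bfh$ by $\bfb:=(\bfh\cdot\widetilde{\bfnu})\widetilde{\bfnu}$, where $\widetilde{\bfnu}$ is a compactly supported $C^1$ extension of $\bfnu$, available since $\Gamma$ is $C^2$. Then $\bfnu\times(\bfu\times\bfb)=\bfh_\bfnu\gamma_T\bfu$ holds for every $\bfu$. Moreover $\Vert\curl\Lbfv\times\bfb\Vert_{H(\curl,\mathcal O)}\le C\Vert\curl\Lbfv\Vert_{H^1(\mathcal O)^3}$, and continuity of $\gamma_t^\pm$ gives the bound without surface product estimates.

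For \eqref{eq:Fcalpwbound} you take a different and shorter route than the paper. The paper represents $\Lbfv=\Fcal(s)\widehat{\bfa}$ by the exterior Stratton--Chu formula \eqref{eq:Stratton-Chu}. It then bounds both $\Dcal(s)\gamma_t\Lbfv$ and $s^{-1}\Scal(s)\gamma_t\curl\Lbfv$ via \eqref{eq:pwbounds}. This requires an $H(\curl)$ bound on $\curl\Lbfv$, which costs an extra $|s|$ through $\curl^2\Lbfv=-s^2\Lbfv$.

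You instead apply \eqref{eq:pwbounds} only to the outer single layer potential, with density $\Vop^{-1}(s)L(s)\widehat{\bfa}$, and use \eqref{eq:Vcoerc}. The factors $|s|^2\cdot|s|^2\cdot|s|^3$ give the same $|s|^7$, with $(\real s)^{-2}$ and $\max\{1,|s|^{-1}\}$ absorbed into $C(\sigma,d_\bfx)$. This is valid and avoids the detour through $\curl\Lbfv$.

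The paper's version relies only on the Cauchy data of $\Lbfv$ and its $H(\curl)$ bound. That is the same mechanism used in the proof of \eqref{eq:ddprop}, where no single-layer representation with a controlled density is available.
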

\begin{proof}
The function $\Lbfv = \Fcal(s)\widehat{\bfa} \in H(\curl,\Omega)$ is the unique solution to \eqref{eq:MWV} with $\widehat{\bfg}$ replaced by $L(s)\widehat{\bfa}$ in \eqref{eq:MWV2}.
Therefore, by \eqref{eq:Vest}, it holds that
\begin{align}\label{eq:Vest3}
\Vert \Lbfv \Vert_{H(\curl,\Omega)} \, \leq \, C_\sigma \frac{|s|^2}{\real s}\Vert L(s)\widehat{\bfa} \Vert_{\HdivO} \, .
\end{align}
We denote by $\Lbfw_D \in H^1(D)^3$ and $\Lbfw_\Omega\in H^1(\Omega)^3$ the unique solutions of \eqref{eq:MWV} with $\widehat{\bfg} = \widehat{\bfa}$ and $\mathcal{O}=D$ and $\mathcal{O} = \Omega$, respectively.
In light of the definition of $L(s)$ from \eqref{eq:defL} and by using \cite[Thm.\@ 3.20]{McLean00} we estimate
\begin{align*}
\Vert \scurl\big( \bfh_\bfnu ( \gamma_n^- \Lambda_D + \gamma_n^+ \Lambda_\Omega) \widehat{\bfa} \Vert_{\HdivO} \, &\leq \, C \Vert \bfh_\bfnu ( \gamma_n^- \Lambda_D + \gamma_n^+ \Lambda_\Omega) \widehat{\bfa} \Vert_{H^{1/2}(\Gamma)} \\
\, &\leq \, C  \left( \Vert \Lbfw_D \Vert_{H^1(D)^3} + \Vert \Lbfw_\Omega \Vert_{H^1(\Omega)^3} \right) \\
\, &\leq \, C_\sigma  \frac{|s|^2}{\real s} \Vert \widehat{\bfa} \Vert_{H^{1/2}(\Gamma)^3}\, ,
\end{align*}
where we used the boundedness of $\gamma_n^\pm$ and the bound in \eqref{eq:RegBound}. We use that for sufficiently smooth domains the space $H_t^{1/2}(\Gamma)$ is embedded in $\HdivO$ and estimate now using \eqref{eq:RegBound2}
\begin{align*}
\Vert \bfh_\bfnu (\gamma_T^- \curl \Lambda_D + \gamma_T^+ \curl \Lambda_\Omega)\widehat{\bfa} \big) \Vert_{H^{1/2}(\Gamma)^3}
\, &\leq \, C \left( \Vert \curl \Lbfw_D \Vert_{H^1(D)^3} + \Vert \curl \Lbfw_\Omega \Vert_{H^1(\Omega)^3} \right) \\
\, &\leq \, C_\sigma \frac{|s|^3}{\real s}\Vert \widehat{\bfa} \Vert_{H^{3/2}(\Gamma)^3}\, .
\end{align*}
Therefore, by using \eqref{eq:Vest3}, we find that
\begin{align*}
\Vert \Fcal(s) \widehat{\bfa} \Vert_{H(\curl,\Omega)} \, = \,
\Vert \Lbfv \Vert_{H(\curl,\Omega)} \, \leq \, C_\sigma \frac{|s|^5}{(\real s)^2}\Vert \widehat{\bfa} \Vert_{H^{3/2}(\Gamma)^3} \,  ,
\end{align*}
what yields the bound in \eqref{eq:Fcalbound}.
Finally, we apply the Stratton-Chu formula in the exterior \eqref{eq:Stratton-Chu}, the pointwise operator bounds from \eqref{eq:pwbounds} and the computation from above to obtain for a fixed $\bfx \in \Omega$ that
\begin{align*}
|\Fcal(s)\widehat{\bfa}(\bfx)| \, &= \, |\Lbfv(\bfx)| \, = \, \Big|(\Dcal(s) \gamma_t \Lbfv)(\bfx) - \frac{1}{s}(\Scal(s) \gamma_t \curl \Lbfv)(\bfx)\Big| \\
\, &\leq \, C(\sigma,\mathrm{d}_\bfx)\rme^{-\mathrm{d}_\bfx \real s}\left( |s|^2\Vert \gamma_t \Lbfv \Vert_{\HdivO} + |s|\Vert \gamma_t \curl \Lbfv \Vert_{\HdivO} \right)\\
\, &\leq \, C(\sigma,\mathrm{d}_\bfx)\rme^{-\mathrm{d}_\bfx \real s}|s|^7\Vert \widehat{\bfa} \Vert_{H^{3/2}(\Gamma)^3} \, ,
\end{align*}
what yields \eqref{eq:Fcalpwbound}.
\end{proof}
Combining the previous lemma with the fact that $\Fcal(s)\gamma_t\LEi = \LEp$ in $\Omega$ immediately yields the following corollary about frequency dependent bounds for the domain derivative in the Laplace domain.
\begin{cor}
Let $\real s \geq \sigma  >0$, let $\Gamma$ be of class $C^{2}$ and let $\bfh \in C^1(\Gamma, \R^3)$.
Let $\LEp \in H(\curl,\Omega)$ be the domain derivative, i.e.\@ the unique solution to \eqref{eq:ddmw}.
Then,
\begin{align*}
\Vert \LEp \Vert_{H(\curl, \Omega)} \, \leq \, C_\sigma \frac{|s|^5}{(\real s)^2}\Vert \gamma_t \LEi \Vert_{H^{3/2}(\Gamma)^3}
\end{align*}
and for $\bfx \in \Omega$, it holds that
\begin{align}\label{eq:Eppwbound}
| \LEp(\bfx) | \, \leq \, C(\sigma,\mathrm{d}_\bfx) \rme^{- \mathrm{d}_\bfx \real s}|s|^7\Vert \gamma_t \LEi \Vert_{H^{3/2}(\Gamma)^3} \, , 
\end{align}
where $C(\sigma,\mathrm{d}_\bfx)$ does not depend on $s$ but on $\sigma$ and on $\mathrm{d}_\bfx^{-1}$.
\end{cor}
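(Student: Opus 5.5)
The corollary follows by applying Lemma~\ref{lem:Fbounds} with the specific argument $\widehat{\bfa} = \gamma_t \LEi$. The plan is to first justify the identity $\LEp = \Fcal(s)\gamma_t\LEi$ in $\Omega$, and then read off both bounds from the operator bounds \eqref{eq:Fcalbound} and \eqref{eq:Fcalpwbound}.

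For the identity, I will argue as follows. The incident field $\LEi$ solves the Laplace domain Maxwell equations in $D_0 \supset\supset D$ and is smooth there, since $\Ei(\cdot,0)$ is supported away from $D_0$. Hence $\gamma_t\LEi \in H^{3/2}(\Gamma)^3$, and by uniqueness for the interior problem \eqref{eq:MWV} with $\mathcal{O}=D$ we get $\Lambda_D\gamma_t\LEi = \LEi$ in $D$. Likewise, $\Lambda_\Omega\gamma_t\LEi = -\LEs$, since $-\LEs$ solves \eqref{eq:MWV} in $\Omega$ with boundary datum $\bfnu\times\LEi$.

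The point that needs care is matching the two one-sided contributions in $L(s)$ from \eqref{eq:defL} with the boundary datum in \eqref{eq:ddmw}, which is written in terms of the total field $\LEt=\LEs+\LEi$ on the exterior side. Two facts should make this work.
\begin{itemize}
\item The field $\LEi$ is smooth across $\Gamma$, so its normal component and $\gamma_T\curl\LEi$ have no jump.
\item The sign conventions for $\gamma_n^\pm$ and $\gamma_T^\pm$ in the definition of $L(s)$ must be the ones under which $\gamma_n^-\Lambda_D+\gamma_n^+\Lambda_\Omega$ applied to $\gamma_t\LEi$ yields the exterior normal component $\LEt_\bfnu$, and analogously for the $\gamma_T\curl$ term. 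I will take this as the intended convention, as the paper asserts right before Lemma~\ref{lem:Fbounds}.
\end{itemize}
With these in hand, $L(s)\gamma_t\LEi$ coincides with the right-hand side of the boundary condition in \eqref{eq:ddmw}. Then $\Scal(s)\Vop^{-1}(s)L(s)\gamma_t\LEi$ is the unique $H(\curl,\Omega)$ solution of \eqref{eq:ddmw} by \cite[Prop.\@ 6.5]{BanSay22}, i.e.\ it equals $\LEp$.

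Given the identity, the energy bound follows from
\begin{align*}
\Vert\LEp\Vert_{H(\curl,\Omega)} = \Vert\Fcal(s)\gamma_t\LEi\Vert_{H(\curl,\Omega)} \leq \Vert\Fcal(s)\Vert_{H(\curl,\Omega)\leftarrow H^{3/2}(\Gamma)^3}\Vert\gamma_t\LEi\Vert_{H^{3/2}(\Gamma)^3},
\end{align*}
combined with \eqref{eq:Fcalbound}. The pointwise bound \eqref{eq:Eppwbound} follows in the same way from \eqref{eq:Fcalpwbound} evaluated at $\bfx$. The constants inherit their dependence on $\sigma$ and $\mathrm{d}_\bfx^{-1}$ from Lemma~\ref{lem:Fbounds} and do not depend on $s$.

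The main obstacle is the identification step, specifically the trace bookkeeping that turns the sum of one-sided traces in $L(s)$ into the traces of $\LEt$ on the exterior side. Once that is settled, the two estimates are immediate.
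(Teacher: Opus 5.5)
Your proposal is the paper's argument: the corollary is obtained by combining Lemma~\ref{lem:Fbounds} with the identity $\Fcal(s)\gamma_t\LEi=\LEp$, which the paper asserts just before that lemma without further proof. Your caution about the identification step is justified, and the issue is a sign slip in \eqref{eq:defL} rather than a trace convention. With \eqref{eq:MWV2} one indeed has $\Lambda_\Omega\gamma_t\LEi=-\LEs$, and since $\gamma_T$ is even in $\bfnu$, no choice of orientation turns $\gamma_T^+\curl\Lambda_\Omega\gamma_t\LEi$ into $+\gamma_T\curl\LEs$, so \eqref{eq:defL} must be read with $\Lambda_D-\Lambda_\Omega$. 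This is harmless here: the proof of Lemma~\ref{lem:Fbounds} bounds the $\Lambda_D$ and $\Lambda_\Omega$ contributions separately by the triangle inequality, so the corrected operator satisfies the same estimates and your conclusion stands.
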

We define the operator $K_\bfh(s)$ for $\bfh \in C^1(\Gamma, \R^3)$ with $0<\Vert \bfh \Vert_{C^1(\Gamma)} < h_0$ by
\begin{align*}
K_\bfh(s) \, := \, \Scal_{\Gamma_\bfh}(s)\Vop_{\Gamma_\bfh}^{-1}(s)\gamma_{t,\Gamma_\bfh} - \Scal(s)\Vop^{-1}(s)\gamma_{t,\Gamma} - \Fcal(s)\gamma_{t,\Gamma}\, ,
\end{align*}
where $\Scal_{\Gamma_\bfh}$ and $\Vop_{\Gamma_\bfh}$ denote the single layer potential and operator on $\Gamma_h$ and $\gamma_{t,\Gamma_\bfh}$ and $\gamma_{t,\Gamma}$ denote the tangential traces on $\Gamma_\bfh$ and $\Gamma$. We evaluate this operator pointwise in space at the receiver points $\bfz_1, \dots, \bfz_M$, such that
\begin{align*}
(K_\bfh(s) \cdot (\bfz_j))_{j=1,\dots,M} : H^1(D_0)^3 \to \C^{3M} \, .
\end{align*}
A prior evaluation of $\Lambda_{D_0}: H^{1/2}(\partial D_0)^3 \to H^1(D_0)^3$ with $\widehat{\bfg} \mapsto \Lbfw$ with $\Lbfw$ solving 
\begin{align*}
\curl^2 \Lbfw + s^2 \Lbfw \, &= \, 0 \quad \text{in } D_0 \, , \\
\bfnu \times \Lbfw \, &= \, \widehat{\bfg} \quad \text{on } \partial D_0 \, ,
\end{align*}
guarantees that the input of $K_\bfh(s)$ is a solution to Laplace domain Maxwell's equations. The combination of \eqref{eq:ddprop} with \eqref{eq:RegBound} yields the following corollary.
\begin{cor}\label{cor:dd}
The operator
\begin{align*}
M_\bfh(s): H^{1/2}(\partial D_0 )^3 \to \C^{3M}\, ,  \quad M_\bfh(s) \, := \, (K_\bfh(s)(\Lambda_{D_0} \cdot ) (\bfz_j))_{j=1,\dots,M}
\end{align*}
is a bounded operator with the bound
\begin{align*}
|M_\bfh(s)\widehat{\bfg}| \, \leq \, C_\sigma |s|^7 \rme^{-d_\bfz \real s} \Vert \bfh \Vert_{C^1(\Gamma)}^2 \Vert \widehat{\bfg} \Vert_{H^{1/2}(\partial D_0)^3}
\end{align*}
for $\real s > \sigma$, $\mathrm{d}_\bfz = \text{dist}(\bfz, \partial D_1)$ and $C = C(\sigma, \mathrm{d}_\bfz)$.
\end{cor}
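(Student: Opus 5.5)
The plan is to reduce the statement to the Laplace domain domain-derivative bound \eqref{eq:ddprop}, applied with the incident field $\LEi := \Lambda_{D_0}\widehat{\bfg}$. Then \eqref{eq:RegBound} controls the resulting norm of $\LEi$ by $\Vert\widehat{\bfg}\Vert_{H^{1/2}(\partial D_0)^3}$.

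\emph{Step 1: $K_\bfh(s)$ produces exactly the remainder in \eqref{eq:ddprop}.} Fix $\widehat{\bfg}\in H^{1/2}(\partial D_0)^3$ and set $\LEi=\Lambda_{D_0}\widehat{\bfg}$. By Lemma~\ref{lem:reg} applied with $\mathcal{O}=D_0$ (whose boundary is a sphere, hence smooth), $\LEi\in H^1(D_0)^3$ solves the Laplace domain Maxwell equations in $D_0$. It is therefore an admissible incident field for every scatterer $D_\bfh\subset\subset D_0$. I then identify the three terms of $K_\bfh(s)$:
\begin{itemize}
\item By the representation $\Lbfv=\Scal(s)\Vop^{-1}(s)\Lbfg$ of the solution of \eqref{eq:MWV}, we have $-\Scal_{\Gamma_\bfh}\Vop^{-1}_{\Gamma_\bfh}\gamma_{t,\Gamma_\bfh}\LEi=\LEs_\bfh$ and $-\Scal\Vop^{-1}\gamma_{t,\Gamma}\LEi=\LEs$.
\item As noted before Lemma~\ref{lem:Fbounds}, $\Fcal(s)\gamma_t\LEi=\LEp$.
\end{itemize}
Up to the sign convention (the derivative of $-\Scal\Vop^{-1}\gamma_t$ being $\Fcal\gamma_t$ or $-\Fcal\gamma_t$, which I will track carefully and which does not affect absolute values), $M_\bfh(s)\widehat{\bfg}$ equals $\widehat F_\Gamma(\bfh)-\widehat F_\Gamma(0)-(\LEp(\bfz_j))_j$. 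This requires the receivers to satisfy $\bfz_j\in\R^3\setminus\overline{D_1}$, so that point evaluation makes sense.

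\emph{Step 2: apply \eqref{eq:ddprop}.} This gives
\[
|M_\bfh(s)\widehat{\bfg}|\le C|s|^5\rme^{-\mathrm d_\bfz\real s}\Vert\bfh\Vert_{C^1(\Gamma)}^2\Vert\LEi\Vert_{H(\curl,D_1\setminus\overline D)}.
\]
One subtlety needs care: $\LEi$ is only defined in $D_0$, while the norm is over $D_1\setminus\overline{D}$. In the proof of \eqref{eq:ddprop}, however, $\LEi$ enters only through $f$, i.e.\ through $\chi\LEi$ on the support of $\chi$. So I will either shrink the cutoff region so that it lies inside $D_0$ (relabelling the domains so that $D\subset\subset D_0'\subset\subset D_1'\subset D_0$), or simply read the norm as $\Vert\LEi\Vert_{H(\curl,D_0\setminus\overline D)}$.

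\emph{Step 3: bound the incident field.} The estimate \eqref{eq:RegBound} for $\mathcal{O}=D_0$ gives
\[
\Vert\LEi\Vert_{H(\curl,D_0)}\le\Vert\LEi\Vert_{H^1(D_0)^3}\le C\frac{|s|^2}{\real s}\Vert\widehat{\bfg}\Vert_{H^{1/2}(\partial D_0)^3}.
\]
Since $\real s>\sigma$, the factor $(\real s)^{-1}$ is absorbed into $C_\sigma$, producing $|s|^{5+2}=|s|^7$. Here I need $\Lbfg\in H_t^{1/2}$, so I take $\widehat{\bfg}$ tangential, or else project onto the tangential part.

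\emph{Main obstacle.} The main difficulty is Step 1: making sure that the scatterer-independent incident field $\Lambda_{D_0}\widehat{\bfg}$ fits the hypotheses under which \eqref{eq:ddprop} was derived. Those hypotheses are that $\LEi$ is a Maxwell solution on the support of $\chi$ and on a neighbourhood of $D_\bfh$ for all small $\bfh$, and that the constant in \eqref{eq:ddprop} is uniform for $\Vert\bfh\Vert_{C^1(\Gamma)}<h_0$. I will handle this by choosing $D_1\subset D_0$ as described in Step 2. Linearity and boundedness of $M_\bfh(s)$ then follow immediately, because it is a composition of bounded linear maps.
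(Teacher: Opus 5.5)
Your proposal is correct and follows the paper's one-line argument exactly: insert $\LEi=\Lambda_{D_0}\widehat{\bfg}$ into \eqref{eq:ddprop}, bound its norm by \eqref{eq:RegBound} on $D_0$, and absorb $(\real s)^{-1}\le\sigma^{-1}$ to go from $|s|^5$ to $|s|^7$. Your relabelling $D\subset\subset D_0'\subset\subset D_1'\subset D_0$ correctly repairs a domain mismatch the paper glosses over, and it only helps since $\mathrm{dist}(\bfz,\partial D_1')\ge\mathrm{dist}(\bfz,\partial D_1)$; however, the sign of $\Fcal$ in $K_\bfh$ is not merely cosmetic as you suggest: it must be the one for which $K_\bfh\LEi$ equals $\pm$ the linearization remainder $\LEs_\bfh-\LEs-\LEp$, since otherwise $K_\bfh\LEi$ picks up an extra $\pm 2\LEp$ and is only of first order in $\Vert\bfh\Vert_{C^1(\Gamma)}$.
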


\begin{rem}
Corollary \ref{cor:dd} is used in Proposition~\ref{prop:main} below to derive the temporal domain derivative. 
One can refrain from its use and apply \eqref{eq:ddprop} directly to obtain the same result, when making the assumption that the incident time-dependent wave $\Ei(\cdot, t)$ is not supported in $D_0$ for $t>T'$ for some $0<T'<T$.
\end{rem}

\section{The temporal domain derivative for perfect conductors}\label{sec:ddT}
The Laplace domain bounds of the previous section can now be used to return to the time domain. 
We recall the terminology introduced in the beginning of Section~\ref{sec:ddL} for computing Fr\'echet derivatives in the time domain.
For a fixed incident wave $\Ei$ satisfying 
\begin{align}\label{eq:Ei}
\partial_t^2 \Ei + \curl^2 \Ei \, = \, 0 \quad \text{in } \R^3 \times (0,\infty)
\end{align}
with $\Ei(\cdot, 0)$ supported away from $D_0$ with $D\subset \subset D_0$, let $X$ denote the operator that maps the boundary $\Gamma$ to the scattered field $\Es$ solving \eqref{eq:MWEs}, evaluated at the points $\bfz_1, \dots, \bfz_M$, i.e.
$X(\Gamma) = (\Es(\bfz_j))_{j=1,\dots,M} \in L^2(0,T; \R^{3M})$.
Thus, let
\begin{align}\label{eq:FOp}
F_\Gamma : D(F_\Gamma) \subset C^1(\Gamma, \R^3) \to L^2(0,T; \R^{3M}) \, , \qquad F_\Gamma(\bfh) \, = \, X(\Gamma_\bfh)\, ,
\end{align}
where $D(F_\Gamma)$ is a neighborhood of the zero function in $C^1(\Gamma, \R^3)$ that is so small that $\Gamma_\bfh = \bfzeta(\Gamma)$ is a well-defined boundary of a scattering object $D_\bfh$.
By the previous insights and bounds we see that the Fr\'echet derivative of $F_\Gamma$ at the point $0$, that we denote by $F_\Gamma'(0) : C^1(\Gamma,\R^3) \to L^2(0,T; \R^{3M})$ is given by
\begin{align*}
F_\Gamma'(0)\bfh \, = \, \Fcal(\partial_t)\gamma_t \Ei\, ,
\end{align*}
where $\Fcal(\partial_t)$ is the convolution type operator defined as in \eqref{Heaviside} corresponding to the Laplace domain operator $\Fcal(s)$ from \eqref{eq:Fcal}.
This is a consequence of Corollary~\ref{cor:dd}. The mapping properties of $F_\Gamma'(0)$ in the bulk $\Omega \times (0,T)$ as well as pointwise in $\bfz_j \times (0,T)$ for $j=1,\dots, M$, follow from Lemma~\ref{lem:Fbounds}. We summarize this in the following theorem.
\begin{proposition}\label{prop:main}
Let $D$ be a bounded $C^2$ domain, $\bfh \in C^1(\Gamma, \R^3)$ and for $r \geq 0$ let $\Ei \in H^{r+7}(0,T; H_{\mathrm{loc}}^1(\R^3))$ solve \eqref{eq:Ei}. Let $\Ei(\cdot, 0)$ be supported away from $D_0$ with $D\subset \subset D_0$. Furthermore, let $\Es \in H^{r+5}(0,T; H^1(\Omega)^3)$ be the unique solution to \eqref{eq:MWEs}.
The temporal domain derivative for perfect conductors is the field $\Et'\in H^{r+2}(0, T; \HcurlOmega)$ satisfying
\begin{subequations}\label{eq:ddmwtd}
\begin{align}
\partial_t^2 \Et' + \curl^2 \Et' \, = \, 0 \quad &\text{in } \Omega \times [0, \infty)\, ,\\
\bfnu \times \Et' \, = \, - \scurl \big( \bfh_\bfnu \Et_\bfnu \big) - \bfh_\bfnu \gamma_T \curl \Et \quad &\text{on } \Gamma \times [0,\infty) \, ,
\end{align}
\end{subequations}
where $\Et = \Ei + \Es$. Evaluating $\Et'$ pointwise in space at $\bfz_j$, $j=1,\dots,M$ provides the Fr\'echet derivative of $F_\Gamma$ at zero from \eqref{eq:FOp}, i.e., 
\begin{align}\label{eq:FrechRep}
F_\Gamma'(0)\bfh \, = \, (E'(\bfz_j))_{j=1,\dots,M} \in H_0^r(0,T; \R^{3M}) \, .
\end{align}
\end{proposition}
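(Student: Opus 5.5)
The plan is to define $\Et' := \Fcal(\partial_t)\gamma_t \Ei$ and to transfer everything from the Laplace domain back to the time domain using the operator-valued Paley--Wiener type result \eqref{eq:pol_bound}--\eqref{Heaviside} of \cite{L94}.

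\textbf{Step 1 (the equations \eqref{eq:ddmwtd}).} For almost every $s \in \C_+$ the Laplace transform of $\Et'$ is $\Fcal(s)\gamma_t\LEi = \LEp$, the solution of \eqref{eq:ddmw}. Since $\Ei(\cdot,0)$ and $\partial_t\Ei(\cdot,0)$ vanish near $D_0$, the same holds for $\Es$ and $\Et'$ in a neighbourhood of $\Omega\cap D_0$. Hence Laplace transformation commutes with $\partial_t^2$, $\curl$, $\gamma_T$ and $\scurl$. Inverting \eqref{eq:ddmw} then gives \eqref{eq:ddmwtd} in the distributional sense, with $\Et = \Ei + \Es$ and $\Es = -(\Sop\Vop^{-1})(\partial_t)\gamma_t\Ei$.

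\textbf{Step 2 (regularity).} Lemma~\ref{lem:Fbounds} gives the bound $\|\Fcal(s)\| \le C_\sigma |s|^5/(\real s)^2$, that is, $\kappa = 5$ and $\nu = 2$. Therefore $\Fcal(\partial_t): H_0^{r+7}(0,T;H^{3/2}(\Gamma)^3) \to H_0^{r+2}(0,T;\HcurlOmega)$. Here I trade two orders to match the regularity claimed in the statement; with $\kappa=5$ alone one would even get $r+2$ from $r+7$.

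The hypothesis on $\Ei$ is only in $H^1_{\mathrm{loc}}$. It must be upgraded to $\gamma_t\Ei \in H^{r+7}_0(0,T;H^{3/2}(\Gamma)^3)$. I do this by composing with $\Lambda_{D_0}$ as in Corollary~\ref{cor:dd}: $\Ei$ solves the homogeneous system in $D_0$, so $\Ei = \Lambda_{D_0}(\gamma_t\Ei|_{\partial D_0})$. Interior elliptic regularity for Maxwell away from $\partial D_0$ (iterating Lemma~\ref{lem:reg} and Remark~\ref{rem:regcurl} on a smaller $C^2$ neighbourhood of $\Gamma$) costs at most polynomial factors in $s$. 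These factors are absorbed into the extra time derivatives. Vanishing initial data near $D_0$ justifies the subscript $0$.

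The same argument with the pointwise bound \eqref{eq:Fcalpwbound}, using $|s|^7$ and $\nu=0$ after absorbing $\mathrm{e}^{-d\real s}\le 1$, shows that $\bfz_j \mapsto \Et'(\bfz_j)$ defines an element of $H_0^r(0,T;\R^{3M})$.

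\textbf{Step 3 (Fréchet derivative).} I would write
\begin{align*}
F_\Gamma(\bfh) - F_\Gamma(0) - (\Et'(\bfz_j))_j \, = \, M_\bfh(\partial_t)\,\gamma_t\Ei|_{\partial D_0} ,
\end{align*}
where $M_\bfh$ is the operator of Corollary~\ref{cor:dd}. To justify this identity, note that $\Es_\bfh = -(\Sop_{\Gamma_\bfh}\Vop_{\Gamma_\bfh}^{-1})(\partial_t)\gamma_{t,\Gamma_\bfh}\Ei$ and that the composition rule holds.

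Corollary~\ref{cor:dd} gives the polynomial bound $C_\sigma|s|^7\|\bfh\|_{C^1(\Gamma)}^2$. The transfer result then yields
\begin{align*}
\big\|M_\bfh(\partial_t)\gamma_t\Ei\big\|_{H_0^r(0,T;\R^{3M})} \, \le \, C\|\bfh\|_{C^1(\Gamma)}^2\,\|\Ei\|_{H_0^{r+7}(0,T;H^{1/2}(\partial D_0)^3)} .
\end{align*}
The constant comes from \cite{L94}. It depends on $T$ through the choice $\sigma=1/T$ but is independent of $\bfh$. Dividing by $\|\bfh\|_{C^1(\Gamma)}$ and letting $\bfh\to0$ gives \eqref{eq:FrechRep}; taking $r=0$ already covers the $L^2$ setting of \eqref{eq:FOp}.

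\textbf{Main obstacle.} The main obstacle is the uniformity in $\bfh$ of the constant in Corollary~\ref{cor:dd}. This constant enters through the perturbed operators $\Scal_{\Gamma_\bfh}\Vop^{-1}_{\Gamma_\bfh}$ and through the pullback estimates \eqref{eq:contres} and \eqref{eq:wlin}. I would check that all constants there depend only on $h_0$ and the $C^2$ character of $\Gamma$. If this fails, I would fall back on the variant in the remark after Corollary~\ref{cor:dd}, applying \eqref{eq:ddprop} directly.
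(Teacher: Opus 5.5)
Your argument follows the paper's proof. You define $\Et'=\Fcal(\partial_t)\gamma_t\Ei$ and get the bulk and point-value mapping properties from Lemma~\ref{lem:Fbounds} through the Laplace-domain transfer result. With $\kappa=5$ that result gives exactly $H_0^{r+7}\to H_0^{r+2}$, so nothing is ``traded''. You then conclude from the identity $F_\Gamma(\bfh)-F_\Gamma(0)-(\Et'(\bfz_j))_j=M_\bfh(\partial_t)\gamma_{t,\partial D_0}\Ei$ and the $\Vert\bfh\Vert_{C^1(\Gamma)}^2$ bound of Corollary~\ref{cor:dd}. Your worry about uniformity in $\bfh$ is legitimate but benign, and the paper uses this implicitly. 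The estimate \eqref{eq:ddprop} is proved on the fixed domain $\Omega$ after pulling back, so $\bfh$ enters only through \eqref{eq:taylor} and the constants $c_1,\dots,c_4$, which are uniform for $\Vert\bfh\Vert_{C^1(\Gamma)}<h_0$. The bound \eqref{eq:RegBound} applied on $D_0$ does not involve $\bfh$ at all. Your fallback is therefore unnecessary, and it would not change the $\bfh$-dependence anyway, since Corollary~\ref{cor:dd} is itself derived from \eqref{eq:ddprop}.

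The one place where you go beyond the paper, the upgrade in Step~2, does not work as written. Lemma~\ref{lem:Fbounds} needs $\gamma_t\Ei\in H_0^{r+7}(0,T;H^{3/2}(\Gamma)^3)$. The $r+7$ time derivatives are used up exactly by the factors $|s|^5$ and $|s|^7$ in \eqref{eq:Fcalbound} and \eqref{eq:Fcalpwbound}, so no time derivatives are left to absorb further powers of $s$. Routing through $\Lambda_{D_0}$ costs $|s|^2/\real s$ by \eqref{eq:RegBound}. The interior step from $H^1$ to $H^2$ near $\Gamma$ costs at least one more power of $|s|$: in $D_0$ one has $\divop\LEi=0$ and hence $\Delta\LEi=s^2\LEi$. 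Working directly in the time domain with $\Delta\Ei=\partial_t^2\Ei$ and interpolation gives only $\gamma_t\Ei\in H_0^{r+6}(0,T;H^{3/2}(\Gamma)^3)$, one order short of what both conclusions need.

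The paper does not attempt this upgrade. Its proof applies Lemma~\ref{lem:Fbounds} to $\gamma_t\Ei$ as an element of $H_0^{r+7}(0,T;H^{3/2}(\Gamma)^3)$, i.e.\ it tacitly assumes this regularity. It uses the weaker $H^{1/2}(\partial D_0)$ setting only for the remainder $M_\bfh(\partial_t)$, where Corollary~\ref{cor:dd} needs nothing more. You should do the same: either state $\gamma_t\Ei\in H_0^{r+7}(0,T;H_t^{3/2}(\Gamma))$ as a hypothesis, or raise the time index on $\Ei$. Do not claim that the losses are absorbed.
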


\begin{proof}
By the discussion in the beginning of Section~\ref{subsec:bi} and by using the Heaviside calculus notation \eqref{Heaviside} there holds
\begin{align}\label{eq:Etrep}
\Et' \, = \, \Fcal(\partial_t) \gamma_t \Ei\, ,
\end{align}
where, by Lemma~\ref{lem:Fbounds}, the operator $\Fcal(\partial_t)$ has the mapping properties
\begin{align*}
\Fcal(\partial_t) &: H_0^{r+7}(0,T; H^{3/2}(\Gamma)^3) \to H_0^{r+2}(0,T; \HcurlOmega) \, , \\
\Fcal(\partial_t) \cdot (\bfz_j) &: H_0^{r+7}(0,T; H^{3/2}(\Gamma)^3) \to H_0^{r}(0,T; \R^3) \, , \quad j=1,\dots,M\, .
\end{align*}
In the same way and by using Corollary~\ref{cor:dd} one finds that
\begin{align*}
M_\bfh(\partial_t) : H_0^{r+7}(0,T; H^{1/2}(\partial D_0)) \to H_0^{r}(0,T; \R^{3M})\, .
\end{align*}
Moreover, Corollary~\ref{cor:dd} yields the bound 
\begin{align}\label{eq:hbound}
\Vert M_\bfh(\partial_t) \widehat{\bfg} \Vert_{H_0^{r}(0,T; \R^{3M})} \, \leq \, C \Vert \bfh \Vert_{C^1(\Gamma)^3}^2 \Vert \widehat{\bfg}\Vert_{H_0^{r+7}(0,T; H^{1/2}(\partial D_0))}\, .
\end{align}
Now we use that 
\begin{align*}
M_\bfh(\partial_t) \gamma_{t,\partial D_0}\Ei \, = \, ((\Es_\bfh - \Es - \Et')(\bfz_j))_{j=1,\dots,M} \, = \, F_\Gamma(\bfh) - F_\Gamma(0) - (\Et'(\bfz_j))_{j=1,\dots,M}\, ,
\end{align*}
where $\gamma_{t,\partial D_0}$ denotes the tangential trace taken on $\partial D_0$. Applying \eqref{eq:hbound} now shows \eqref{eq:FrechRep}.
\end{proof}
For the temporal domain derivative $\Et'$ from \eqref{eq:ddmwtd} the previous proof shows in particular the regularity bounds
\begin{align*}
\Vert \Et' \Vert_{H_0^{r+5}(0,T; \HcurlOmega)} \, &\leq \, C \Vert \gamma \Ei \Vert_{H_0^{r}(0,T; H^{3/2}(\Gamma)^3)}\, , \\
\Vert \Et'(\bfz_j) \Vert_{H_0^{r+7}(0,T; \R^3)} \, &\leq \, C \Vert \gamma \Ei \Vert_{H_0^{r}(0,T; H^{3/2}(\Gamma)^3)}
\end{align*}
for any $r\geq 0$ and $j=1,\dots,M$.

\subsection{Approximation in time via Runge--Kutta convolution quadrature}
As a semi-discretization in time we consider the Runge--Kutta convolution quadrature method as initially introduced in \cite{BLM11}. For a recent revision of this method including a section on implementation remarks we refer to \cite[Ch.\@ 5]{BanSay22}.
We use an $m$-stage Radau IIA convolution quadrature method to approximate solutions to Maxwell's equation \eqref{eq:MWEs} as well as the temporal domain derivative \eqref{eq:ddmwtd} in time.
For this purpose, let $\tau>0$ be some constant time step size and denote by $t_n = \tau n$ equidistant time points.
We denote the temporal approximations to $\Es(\cdot,t_n)$ and $\Et'(\cdot,t_n)$ by $(\Es_\tau)_n$ and $(\Et'_\tau)_n$, respectively.
Similarly to our previous notation we let $X_\tau(\Gamma) = (\Es_\tau(\bfz_j))_{j=1,\dots,M} \in \ell_\tau^2(0,T; \R^{3M})$ be the sequence of temporal approximations to $\Es$ at the spatial points $\bfz_j$, $j=1,\dots,M$ and for the time points $t_n$, $n=0,\dots, N$.
The space $\ell^2_\tau(0,T; \R^{3M})$ consists of the space of finite sequences endowed with the norm
\begin{align*}
\Vert \Es_\tau \Vert_{\ell^2_\tau(0,T; \R^{3M})}^2 \, := \, \tau \sum_{n=0}^N \sum_{j=1}^M \left|(\Es_\tau)_n(\bfz_j)\right|_{\R^{3}}^2\, .
\end{align*}
As earlier, we are interested in finding the Fr\'echet derivative of the time-discrete function
\begin{align}\label{eq:FOpd}
F_{\Gamma,\tau} : D(F_{\Gamma,\tau}) \subset C^1(\Gamma, \R^3) \to \ell_\tau^2(0,T; \R^{3M}) \, , \qquad F_{\Gamma,\tau}(\bfh) \, = \, X_\tau(\Gamma_\bfh)\, ,
\end{align}
In light of Proposition~\ref{prop:main} it is unsurprising that the Fr\'echet derivative at zero is given by applying the same Radau IIA convolution quadrature method to the temporal domain derivative's representation from \eqref{eq:Etrep}. We summarize this in the next lemma.
\begin{lem}
Under the same regularity assumptions as in Proposition~\ref{prop:main}, the Fr\'echet derivative of the time discrete operator $F_{\Gamma,\tau}$ from \eqref{eq:FOpd} at zero is given by the convolution quadrature applied to \eqref{eq:Etrep} and evaluated in space at $\bfz_j$, $j=1,\dots,M$, i.e., 
\begin{align}\label{eq:repEtp}
F_{\Gamma,\tau}'(0)\bfh \, = \, (\Et_\tau'(\bfz_j))_{j=1,\dots,M} \, .
\end{align}
\end{lem}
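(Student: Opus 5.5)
The plan is to mirror the proof of Proposition~\ref{prop:main}, replacing the continuous Heaviside operational calculus by its Radau IIA convolution quadrature counterpart. For a Laplace domain family $K(s)$ we write $K(\partial_t^\tau)$ for the associated Runge--Kutta convolution quadrature operator. It is obtained by inserting the matrix-valued symbol $\Delta(\zeta)/\tau$ into $K$ and expanding in powers of $\zeta$. Two structural facts of convolution quadrature will be used throughout. The first is the composition rule $K(\partial_t^\tau)L(\partial_t^\tau) = (KL)(\partial_t^\tau)$. The second is linearity with respect to the symbol. By construction the time-discrete scattered field on $\Gamma_\bfh$ is $\Es_{\tau,\bfh} = -(\Scal_{\Gamma_\bfh}\Vop_{\Gamma_\bfh}^{-1})(\partial_t^\tau)\gamma_{t,\Gamma_\bfh}\Ei$. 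We therefore define the candidate derivative $\Et'_\tau := \Fcal(\partial_t^\tau)\gamma_t\Ei$, which is exactly convolution quadrature applied to \eqref{eq:Etrep}.

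By linearity in the symbol, the remainder is again a convolution quadrature operator:
\begin{align*}
F_{\Gamma,\tau}(\bfh) - F_{\Gamma,\tau}(0) - (\Et'_\tau(\bfz_j))_{j=1,\dots,M} \, = \, M_\bfh(\partial_t^\tau)\gamma_{t,\partial D_0}\Ei \, .
\end{align*}
Here $M_\bfh(s)$ is the operator from Corollary~\ref{cor:dd}, which already incorporates $\Lambda_{D_0}$. The point is that $\Lambda_{D_0}$ applied to the trace reproduces $\Ei$ in $D_0$; this uses that $\Ei$ solves \eqref{eq:Ei} and that $\Ei(\cdot,0)$ vanishes near $D_0$. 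For this step one should check that $\Lambda_{D_0}(s)\gamma_{t,\partial D_0}$ acts as the identity on the Laplace-domain solutions for every $s\in\C_+$. The convolution quadrature then inherits this identity, because only evaluations of the symbol at $\Delta(\zeta)/\tau$ enter, and $\Delta(\zeta)$ has spectrum in $\C_+$ for Radau IIA methods.

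Next we need a discrete analogue of the Lubich bound. Corollary~\ref{cor:dd} gives a polynomial bound $|M_\bfh(s)| \le C_\sigma |s|^7 \Vert\bfh\Vert_{C^1(\Gamma)}^2$ for $\real s>\sigma$. The exponential factor $\rme^{-d_\bfz\real s}$ is simply dropped. The standard convolution quadrature stability result for polynomially bounded symbols then applies (e.g.\ \cite{BLM11} or \cite[Ch.~5]{BanSay22}). It states that for $K$ satisfying \eqref{eq:pol_bound} with exponent $\kappa$, the operator $K(\partial_t^\tau)$ is bounded from $H_0^{r+\kappa}(0,T;X)$ into $\ell^2_\tau(0,T;Y)$, uniformly in $\tau$. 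The proof goes through a discrete Plancherel argument on the circle $|\zeta|=\rme^{-\sigma\tau}$, or alternatively writes $K(\partial_t^\tau)=(s^{-k}K)(\partial_t^\tau)\,(\partial_t^\tau)^k$ with $k>\kappa+1$. This yields
\begin{align*}
\Vert M_\bfh(\partial_t^\tau)\gamma_{t,\partial D_0}\Ei\Vert_{\ell^2_\tau(0,T;\R^{3M})} \, \leq \, C\Vert\bfh\Vert_{C^1(\Gamma)}^2 \Vert \gamma_{t,\partial D_0}\Ei\Vert_{H_0^{r+7}(0,T;H^{1/2}(\partial D_0)^3)} \, ,
\end{align*}
which is $o(\Vert\bfh\Vert_{C^1(\Gamma)})$ and proves \eqref{eq:repEtp}. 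Linearity and boundedness of $\bfh\mapsto\Et'_\tau(\bfz_j)$ follow in the same way from \eqref{eq:Fcalpwbound}, since $\Fcal(s)$ is linear in $\bfh$ and bounded by $C\Vert\bfh\Vert_{C^1(\Gamma)}|s|^7$.

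I expect the main obstacle to be this uniform-in-$\tau$ stability step, for two reasons. First, Radau IIA convolution quadrature with a polynomially growing symbol needs enough temporal regularity and vanishing initial data; $H_0^{r+7}$ with $r\ge 0$ supplies this. Second, the bound has to be transferred from the stage values to the endpoint values that appear in the $\ell_\tau^2$ norm. Everything else follows from linearity of convolution quadrature in the symbol together with the Laplace-domain estimates already established.
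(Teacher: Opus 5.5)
Your route is the one the paper sketches: pass to generating functions, evaluate Corollary~\ref{cor:dd} at $s=\Delta(\zeta)/\tau$, and transfer back to $\ell^2_\tau$. The step that reduces the remainder to $M_\bfh(\partial_t^\tau)\gamma_{t,\partial D_0}\Ei$ fails, and the reason you give for it is incorrect. The relation $\Lambda_{D_0}(s)\gamma_{t,\partial D_0}\Lbfu=\Lbfu$ is not an identity between symbols. The operator $\Lambda_{D_0}(s)\gamma_{t,\partial D_0}$ is a projection onto solutions of $\curl^2\Lbfu+s^2\Lbfu=0$ in $D_0$, and it reproduces only those. Convolution quadrature feeds $\Lambda_{D_0}(\Delta(\zeta)/\tau)$ the generating function of the stage samples of $\Ei$. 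That function is a solution at the argument $\Delta(\zeta)/\tau$ only if $(\partial_t^\tau)^2\Ei+\curl^2\Ei=0$ on the samples. This fails, since $\curl^2\Ei=-\partial_t^2\Ei$ exactly and $(\partial_t^\tau)^2\neq\partial_t^2$. Put differently, $\Lambda_{D_0}(\partial_t^\tau)\gamma_{t,\partial D_0}\Ei$ is the Radau IIA approximation of the interior problem, and it differs from $\Ei$ by its time-discretization error. That $\Delta(\zeta)$ has spectrum in $\C_+$ only guarantees that the symbol can be evaluated. With your definitions of $\Es_{\tau,\bfh}$ and $\Et'_\tau$, which use sampled $\Ei$ on $\Gamma_\bfh$ and $\Gamma$, the remainder is $K_\bfh(\partial_t^\tau)$ applied to the samples of $\Ei|_{D_0}$ themselves. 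Corollary~\ref{cor:dd} says nothing about this.

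This is more than a missing justification. For $\Lbfu$ that is not a Laplace-domain solution, $K_\bfh(s)\Lbfu$ has a nonvanishing part that is linear in $\bfh$. The exact shape derivative of $\bfh\mapsto\Scal_{\Gamma_\bfh}(s)\Vop_{\Gamma_\bfh}^{-1}(s)\gamma_{t,\Gamma_\bfh}\Lbfu$ has boundary data built from the actual traces $\gamma_n\Lbfu$ and $\gamma_T\curl\Lbfu$ on $\Gamma$. By contrast, $\Fcal(s)\gamma_t\Lbfu$ sees only $\gamma_t\Lbfu$ and uses the traces of $\Lambda_D(s)\gamma_t\Lbfu$ instead. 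So with sampled $\Ei$, $\Fcal(\partial_t^\tau)\gamma_t\Ei$ in general misses the derivative of $F_{\Gamma,\tau}$ by a term of size (time-discretization error)$\cdot\Vert\bfh\Vert_{C^1(\Gamma)}$. No stability estimate turns that into $o(\Vert\bfh\Vert_{C^1(\Gamma)})$.

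Your argument does go through if both discrete maps are built from the discretely propagated incident field. Concretely, set $F_{\Gamma,\tau}(\bfh)=-\big((\Scal_{\Gamma_\bfh}\Vop_{\Gamma_\bfh}^{-1}\gamma_{t,\Gamma_\bfh}\Lambda_{D_0})(\partial_t^\tau)\gamma_{t,\partial D_0}\Ei\big)(\bfz_j)$ and $\Et'_\tau=(\Fcal\gamma_t\Lambda_{D_0})(\partial_t^\tau)\gamma_{t,\partial D_0}\Ei$. The remainder is then $M_\bfh(\partial_t^\tau)\gamma_{t,\partial D_0}\Ei$ by linearity and the composition rule alone, exactly as in Proposition~\ref{prop:main}. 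For fixed $\tau$ and the finitely many steps $n\le N$, the $\Vert\bfh\Vert_{C^1(\Gamma)}^2$ bound follows from Cauchy's formula on a single circle $|\zeta|=\rho<1$. So uniform-in-$\tau$ stability is not the real obstacle. The alternative is to keep sampled $\Ei$ and accept that $\Et'_\tau$ is the derivative only up to this consistency error. The paper's sketch invokes Corollary~\ref{cor:dd} ``as in the proof of Proposition~\ref{prop:main}'' and passes over the same point, so you need to make this choice explicit.
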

\begin{proof}
We only draw a sketch of this proof as it does not include new techniques.
The idea is to consider the generating functions of $(\Es_{h,\tau}(\bfz_j))_{j=1,\dots,M}$, $(\Es_\tau(\bfz_j))_{j=1,\dots,M}$ and of $(\Et_\tau'(\bfz_j))_{j=1,\dots,M}$.
Then, as in the proof of Proposition~\ref{prop:main}, one applies Corollary~\ref{cor:dd}. 
Here, one needs to apply it to $M_\bfh(s)$ pointwise on the contour described by the Radau IIA convolution quadrature method.
\end{proof}

The application of a Radau IIA convolution quadrature method to approximate both the scattered field $\Es(\bfz_j,\cdot)$ and the domain derivative $\Et'(\bfz_j,\cdot)$ for $j=1,\dots,M$ in time has the extraordinary property that they achieve their full convergence order, which is $p=2m-1$ provided that the incident wave $\Ei$ is smooth enough in time and space. 
While this is has been proven in \cite{BLM11}, where it is shown that the full order in time is obtained through the exponential decay in $\real s$, the first application to Maxwell's equations was done in \cite{BalBanSauVeit13}. 
From \eqref{eq:pwbounds} and \eqref{eq:Vcoerc} we see that
\begin{equation*}
| \delta_{\bfz_j} \LEs | \, \leq \, \rme^{-d_{\bfz_j} \real s} |s|^4 \Vert \gamma_t \LEi \Vert_{\HdivO}
\end{equation*}
and applying \cite[Thm.\@ 3]{BLM11} yields that
\begin{align*}
\left| ( \Es(\bfz_j,t_n) - (\Es_\tau)_n(\bfz_j))_{j=1,\dots, M} \right|_{\R^{3M}} \, \leq \, C \tau^{2m-1} \Vert \gamma_t \Ei \Vert_{H_0^r(0,T; \HdivO)}
\end{align*}
for any $r>2m+11/2$ (compare also to \cite[Sec.\@ 6.3]{BanSay22}). The particular regularity in time follows from the requirement in \cite[Thm.\@ 3]{BLM11} that $r>2m+\kappa$ with $\kappa$ from \eqref{eq:pol_bound}  (here $\kappa = 4$) together with the embedding $H_0^{\tilde{r}}(0,T;X) \subset C_0^{r+1}([0,T]; X)$ for any $\tilde{r}>r+3/2$ that is required for the convergence of the remaining integral in \cite[Thm.\@ 3]{BLM11}.
For the temporal domain derivative we gather the approximation in time in the next theorem. We stress that it follows directly from \cite[Thm.\@ 3]{BLM11} and the pointwise bound in Lemma~\ref{lem:Fbounds}.
\begin{thm}\label{thm:sintime}
Let $D$ be a bounded $C^2$ domain and let $\bfh \in C^1(\Gamma, \R^3)$.
The convolution quadrature semi-discretization in time based on a Radau IIA method with $m$ stages from \eqref{eq:repEtp} provides the following estimate at $t_n = n\tau$:
\begin{align*}
\left| ( \Et'(\bfz_j,t_n) - (\Et'_\tau)_n(\bfz_j))_{j=1,\dots, M} \right|_{\R^{3M}} \, \leq \, C \tau^{2m-1} \Vert \gamma_t \Ei \Vert_{H_0^r(0,T; H^{3/2}(\Gamma)^3)}\, 
\end{align*}
for $\gamma_t \Ei \in H_0^r(0,T; H_t^{3/2}(\Gamma))$ with $r>2m+17/2$.
\end{thm}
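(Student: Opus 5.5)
The plan is to mirror the argument just given for the scattered field, replacing the pointwise Laplace domain bound for $\delta_{\bfz_j}\LEs$ by the pointwise bound \eqref{eq:Fcalpwbound} of Lemma~\ref{lem:Fbounds}. By \eqref{eq:Etrep}, $\Et'(\bfz_j,\cdot) = (\delta_{\bfz_j}\Fcal)(\partial_t)\gamma_t\Ei$. The same lemma establishing \eqref{eq:repEtp} shows that the Radau IIA convolution quadrature approximation $(\Et'_\tau)_n(\bfz_j)$ is exactly $(\delta_{\bfz_j}\Fcal)(\partial_t^\tau)\gamma_t\Ei$ evaluated at $t_n$. The error therefore is the convolution quadrature error of the single operator family $K(s) := \Fcal(s)\cdot(\bfz_j) : H^{3/2}(\Gamma)^3 \to \C^3$, applied to the datum $g = \gamma_t\Ei$.

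The first step is to verify the hypotheses of \cite[Thm.\@ 3]{BLM11} for $K(s)$. Analyticity in $\C_+$ follows from the analyticity of $\Scal(s)$, $\Vop^{-1}(s)$ and of the solution operators $\Lambda_D,\Lambda_\Omega$ entering $L(s)$ in \eqref{eq:defL}. By \eqref{eq:Fcalpwbound} we have $|K(s)| \le C(\sigma,d_{\bfz_j})\, |s|^7\, \rme^{-d_{\bfz_j}\real s}$ for $\real s\ge\sigma$. This is a polynomial bound of type \eqref{eq:pol_bound} with $\kappa = 7$ and $\nu=0$, and it carries the additional exponential decay in $\real s$ with $d_{\bfz_j}>0$. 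That exponential factor is precisely the ingredient used in \cite{BLM11} to obtain the full classical order $p=2m-1$ instead of the reduced stage-order rate. Invoking \cite[Thm.\@ 3]{BLM11} then gives
\begin{align*}
|\Et'(\bfz_j,t_n) - (\Et'_\tau)_n(\bfz_j)| \le C\tau^{2m-1}\Vert \gamma_t\Ei\Vert_{C^{\tilde r}([0,T];H^{3/2}(\Gamma)^3)}
\end{align*}
for any $\tilde r > 2m+\kappa = 2m+7$, together with the vanishing of sufficiently many initial derivatives. The latter is guaranteed because $\Ei(\cdot,0)$ is supported away from $D_0$.

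The second step converts this $C^{\tilde r}$ norm into a Sobolev norm, as in the scattered field case. Using the embedding $H_0^{r}(0,T;X)\subset C_0^{\tilde r}([0,T];X)$ for $r>\tilde r+3/2$, the condition becomes $r>2m+7+3/2 = 2m+17/2$, which matches the statement. Summing over the finitely many receivers $j=1,\dots,M$ yields the $\R^{3M}$ estimate with the constant taken as the maximum over $j$, which depends on $\min_j d_{\bfz_j}^{-1}$ and on $\sigma$.

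The main obstacle is checking that the structure of $K(s)$ matches exactly what \cite[Thm.\@ 3]{BLM11} requires. Concretely, the pointwise bound must hold uniformly on a half-plane $\real s\ge\sigma$ containing the contour used for the Radau IIA generating functions, i.e.\ the spectrum of $\delta(\zeta)/\tau$. The constant must also depend on $\sigma$ only through $\sigma^{-1}$, so that the choice $\sigma\sim 1/T$ is admissible. Both points follow from Lemma~\ref{lem:Fbounds}, where $C(\sigma,d_\bfx)$ depends only on $\sigma^{-1}$ and $d_\bfx^{-1}$. Since the bound there is stated as an operator norm from $H^{3/2}(\Gamma)^3$, the vector-valued version of the Lubich estimate applies verbatim with $X = H_t^{3/2}(\Gamma)$ and $Y=\C^3$.
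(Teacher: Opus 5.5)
Your proposal is correct and follows the paper's argument. The paper likewise applies \cite[Thm.\@ 3]{BLM11} to the pointwise family $\Fcal(s)\cdot(\bfz_j)$, using the exponentially decaying bound \eqref{eq:Fcalpwbound} with $\kappa=7$, and then passes to a Sobolev norm in time to obtain $r>2m+7+3/2=2m+17/2$. The one bookkeeping nuance is where the $3/2$ comes from: in the paper it is one extra time derivative, needed for the remaining integral in \cite[Thm.\@ 3]{BLM11}, plus the $1/2$ from the embedding $H_0^{\tilde r}\subset C_0^{r+1}$. Your formulation, an embedding into $C_0^{\tilde r}$ with $r>\tilde r+3/2$, hides this extra derivative, but it yields the same threshold.
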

If the temporal error is measured in the $\HcurlOmega$ norm instead, then order reduction occurs. This is also a consequence of \cite[Thm.\@ 3]{BLM11} and we summarize this in the next remark.
\begin{rem}
Under the same assumptions as in Theorem~\ref{thm:sintime} the following error bound holds at $t_n = n\tau$:
\begin{align*}
\left\Vert ( \Et'(\bfz_j,t_n) - (\Et'_\tau)_n(\bfz_j))_{j=1,\dots, M} \right\Vert_{\HcurlOmega} \, \leq \, C \tau^{m-2} \Vert \gamma_t \Ei \Vert_{H_0^r(0,T; H^{3/2}(\Gamma)^3)}\, 
\end{align*}
for $\gamma_t \Ei \in H_0^r(0,T; H^{3/2}(\Gamma)^3)$ with $r>2m+13/2$.
\end{rem}
\subsection{Approximation in space via Galerkin discretization}
Let $\{\mathcal{T}_h\}$ be a shape regular, quasi-uniform family of meshes of the boundary $\Gamma$.
To achieve the full order of convergence in space we assume that $\Gamma$ is $C^6$ and we perform estimates in the mesh size $h$ always in their full order.
We denote by $X_h$ the finite dimensional $H(\sdiv,\Gamma)$-conforming subspace of Raviart-Thomas basis functions.
Furthermore, denote by $P_h : X_h \to \HdivO$ the natural embedding of $X_h$ into $\HdivO$ and by $P_h': \HdivO \to X_h$ its adjoint with respect to the dual pairing
\begin{align}\label{eq:nuskp}
\langle \bfu, \bfv\rangle_{\bfnu} \, := \, \int_\Gamma (\bfnu \times \bfu) \cdot \overline{\bfv} \ds\, .
\end{align}
The space-discrete, Laplace domain single layer operator $\Vop_h(s)$ is defined by
\begin{equation*}
\Vop_h(s) : X_h \to X_h \, , \quad \Vop_h(s) \, = \, P_h' \Vop(s) P_h \, .
\end{equation*}
The operator $\Vop_h(s)$ inherits the coercivity properties of the operator $\Vop(s)$ from \eqref{eq:Vcoerc} (see also \cite[Lem.\@ 4.8]{BalBanSauVeit13}), i.e., it holds that
\begin{align*}
\Vert \Vop_h^{-1}(s) \Vert_{\HdivO \leftarrow \HdivO} \, \leq \, C \frac{|s|^2}{\real s}\max\{1, |s|^{-1}\}\, .
\end{align*}
Full-discretization error estimates for the direct scattering problem have been performed in \cite{BalBanSauVeit13}.
Here, we can profit from this procedure in the following way.
We write $K(\partial_t^\tau)$ for an operator that has been discretized in time at the time points $t_n = n\tau$ by a Radau IIA convolution quadrature scheme with $m$ stages. 
Moreover, we write $K(\underline{\partial_t^\tau})$ to indicate the $m$ stages approximating the operator $K(\partial_t)$ at the intermediate steps
$\underline{t_n} = (t_n + c_\ell \tau)_{\ell=1}^m$, where $c$ stems from the Runge-Kutta scheme (see \cite[Ch.\@ 5]{BanSay22} for details).
We define the densities 
\begin{align}\label{eq:phis}
\begin{split}
\bfphi \, &:= \, \Vop^{-1}(\partial_t) L(\partial_t) \gamma_t \Ei\, , \\
\widetilde{\bfphi}_{h,\tau} \, &:= \, \Vop_h^{-1}(\underline{\partial_t^\tau})P_h' L(\partial_t) \gamma_t \Ei \, ,\\
 \bfphi_{h,\tau} \, &:= \, \Vop_h^{-1}(\underline{\partial_t^\tau}) L_h(\underline{\partial_t^\tau}) \gamma_t \Ei\, , 
 \end{split}
\end{align}
where $L_h$ is an approximation to $L$ mapping from $\HdivO$ into $X_h$ that we define in \eqref{eq:Lhsplit} below and split the full error $S(\partial_t) \bfphi - S(\partial_t^\tau)\bfphi_{h,\tau}$ into
\begin{align}\label{eq:spliterr}
S(\partial_t) \bfphi - S(\partial_t^\tau)\bfphi_{h,\tau} \, = \, \bfE'(\bfy, t_n) - (\widetilde{\bfE}'_{h,\tau})_n(\bfy) + (\widetilde{\bfE}'_{h,\tau})_n(\bfy) - (\bfE'_{h,\tau})_n(\bfy) \, ,
\end{align}
where $(\widetilde{\bfE}'_{h,\tau})_n(\bfy) = (S(\partial_t^\tau)\widetilde{\bfphi}_{h,\tau}(\bfy))_n$. 
The difference of the first two terms on the right hand side of \eqref{eq:spliterr} can be estimated as in \cite[Thm.\@ 4.11]{BalBanSauVeit13}, since this difference carries the exact load $L(\partial_t)\gamma_t \Ei$ (which is, however, not available in practice).
The difference of the last two terms on the right hand side of \eqref{eq:spliterr} requires a deeper understanding in approximating the boundary load of the domain derivative.

Using the definition of $L(s)$ from \eqref{eq:defL}, it can be seen that for $\widehat{\bfa} = \gamma_t \LEi$ there holds
\begin{align}\label{eq:Lsplit}
L(s)\gamma_t \LEi \, = \, T_1(s)\gamma_t\LEi - T_2(s)\gamma_t\LEi
\end{align}
with
\begin{subequations}\label{eq:Talter}
\begin{align}
T_1(s)\gamma_t\LEi \, &:= \, -\scurl\big( \bfh_\bfnu ( \gamma_n^- \Lambda_D + \gamma_n^+ \Lambda_\Omega) \gamma_t\LEi \big) \, = \, \frac{1}{s}\scurl\left(\bfh_\bfnu \sdiv \bigl( \Vop^{-1}(s) \gamma_t \LEi \bigr)\right) \, , \label{eq:T1alter}\\
 T_2(s)\gamma_t\LEi \, &:= \bfh_\bfnu (\gamma_T^- \curl \Lambda_D + \gamma_T^+ \curl \Lambda_\Omega)\gamma_t \LEi \big) \, = \, \, s \bfh_\bfnu \bigl( \bfnu \times \Vop^{-1}(s) \gamma_t \LEi\bigr)\, . 
\end{align}
\end{subequations}
Using the operator on the right side of \eqref{eq:T1alter} and estimating operator by operator (using in particular \eqref{eq:Vhighbound}) and considering Lemma \ref{lem:Fbounds} for $T_2(s)$ shows that
\begin{equation*}
\Vert T_1(s) \Vert_{\HdivO \leftarrow H_t^{5/2}(\Gamma)} \, \leq \, C \frac{|s|}{\real s}
\quad \text{and } \quad 
\Vert T_2(s) \Vert_{\HdivO \leftarrow H_t^{3/2}(\Gamma)} \, \leq \, C \frac{|s|^3}{\real s}\, .
\end{equation*}

We let $\widehat{\bfg}$ be sufficiently regular (e.g., $\widehat{\bfg} \in H_t^{11/2}(\Gamma)$) and abbreviate $\bfj = \Vop^{-1}(s) \widehat{\bfg}$. Using an argument based on C\'ea's lemma together with a best-approximation result using the orthogonal projection $\Pi_{h, \sdiv} : \HdivO \to X_h$ (see \cite[Thm.\@ 14]{BufHip03}) one finds that the Galerkin approximation $\bfj_h = \Vop_h^{-1}(s)P_h'\widehat{\bfg}$ satisfies
\begin{align}\label{eq:galapp}
\Vert \bfj - \bfj_h \Vert_{\HdivO} \, \leq \, C \frac{|s|^4}{(\real s)^2} h^{3/2}\Vert \bfj \Vert_{H^1(\sdiv, \Gamma)}
\, \leq \, C \frac{|s|^6}{(\real s)^3} h^{3/2}\Vert \widehat{\bfg} \Vert_{H^{7/2}(\Gamma)^3}\, ,
\end{align}
where we used that $H_t^{5/2}(\Gamma) \subset H_t^2(\Gamma) \subset H^1(\sdiv, \Gamma)$ and the bound \eqref{eq:Vhighbound} for the last inequality.
We can also prove an error bound in the $L^2$-norm, which we state in the next lemma.

\begin{lem}\label{lem:jmjh}
Let $\Gamma$ be at least $C^4$, $\widehat{\bfg} \in H_t^{7/2}(\Gamma)$, $\bfj = \Vop^{-1}(s) \widehat{\bfg}$ and $\bfj_h = \Vop_h^{-1}(s)P_h'\widehat{\bfg}$. Then,
\begin{align}\label{eq:jbound}
\Vert \bfj - \bfj_h \Vert_{H(\sdiv, \Gamma)} \, \leq \, C h \frac{|s|^6}{(\real s)^3}  \Vert \widehat{\bfg} \Vert_{H^{7/2}(\Gamma)^3}
\end{align}
\end{lem}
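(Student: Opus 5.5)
The plan is to reduce the $H(\sdiv,\Gamma)$ error to the energy-norm error \eqref{eq:galapp}, plus a best-approximation term, by splitting
\begin{align*}
\bfj - \bfj_h \, = \, (\bfj - \Pi_h \bfj) + (\Pi_h \bfj - \bfj_h)\, .
\end{align*}
Here $\Pi_h$ is an approximation operator onto $X_h$ that commutes with the surface divergence, for instance the Raviart--Thomas interpolant or the projection $\Pi_{h,\sdiv}$ from \cite{BufHip03}. The first term is handled by standard interpolation estimates in $H(\sdiv,\Gamma)$:
\begin{align*}
\Vert \bfj - \Pi_h \bfj \Vert_{H(\sdiv,\Gamma)} \, \leq \, C h \Vert \bfj \Vert_{H^1(\sdiv,\Gamma)}\, .
\end{align*}
To bound the right-hand side, use $\bfj \in H_t^{5/2}(\Gamma)$ together with the embedding $H_t^{5/2}(\Gamma)\subset H^1(\sdiv,\Gamma)$. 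Then \eqref{eq:Vhighbound} with $m=3$ (admissible since $\Gamma$ is $C^4$) gives
\begin{align*}
\Vert \bfj \Vert_{H^{5/2}} \, \leq \, C\frac{|s|^2}{\real s}\Vert \widehat{\bfg}\Vert_{H^{7/2}}\, .
\end{align*}
This contribution is therefore of order $h|s|^2/\real s$, which is far better than required.

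For the discrete term $\bfe_h := \Pi_h \bfj - \bfj_h \in X_h$, I use an inverse-inequality argument on the quasi-uniform mesh. The $L^2$ part of $\bfe_h$ is controlled by the $H^{-1/2}$-type norm via an inverse estimate:
\begin{align*}
\Vert \bfe_h\Vert_{L^2(\Gamma)} \, \leq \, C h^{-1/2}\Vert \bfe_h\Vert_{H^{-1/2}(\Gamma)}\, .
\end{align*}
For the divergence part, $\sdiv \bfe_h$ is piecewise polynomial, so
\begin{align*}
\Vert \sdiv \bfe_h\Vert_{L^2(\Gamma)} \, \leq \, C h^{-1/2}\Vert \sdiv \bfe_h\Vert_{H^{-1/2}(\Gamma)}\, .
\end{align*}
Combining the two,
\begin{align*}
\Vert \bfe_h\Vert_{H(\sdiv,\Gamma)} \, \leq \, C h^{-1/2}\Vert \bfe_h \Vert_{\HdivO}\, .
\end{align*}
The main obstacle is the first inverse inequality, since $\HdivO$ controls only the $V_t^*$-norm, which for tangential fields is an $H^{-1/2}$-type norm. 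On a quasi-uniform Raviart--Thomas space this inverse estimate is standard, but it deserves a citation or a short argument via a Hodge decomposition of $X_h$ if needed.

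Next, split $\Vert \bfe_h\Vert_{\HdivO}$ by the triangle inequality as
\begin{align*}
\Vert \bfe_h\Vert_{\HdivO} \, \leq \, \Vert \Pi_h\bfj-\bfj\Vert_{\HdivO} + \Vert \bfj-\bfj_h\Vert_{\HdivO}\, .
\end{align*}
For the first piece, the approximation estimate in the weaker norm gives
\begin{align*}
\Vert \Pi_h\bfj-\bfj\Vert_{\HdivO} \, \leq \, Ch^{3/2}\Vert\bfj\Vert_{H^1(\sdiv,\Gamma)}\, ,
\end{align*}
as already used in deriving \eqref{eq:galapp}. The second piece is bounded directly by \eqref{eq:galapp}, giving $C h^{3/2}|s|^6(\real s)^{-3}\Vert\widehat{\bfg}\Vert_{H^{7/2}}$. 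Multiplying by $h^{-1/2}$ yields $h\,|s|^6/(\real s)^3$, and adding the interpolation term from the first step, which is dominated by this (using $\real s\ge\sigma$ and $|s|\ge\real s$ to compare powers), gives \eqref{eq:jbound}.
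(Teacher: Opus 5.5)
Your proposal is correct and follows the paper's argument: you use the same splitting $\bfj-\bfj_h=(\bfj-\Pi_h\bfj)+(\Pi_h\bfj-\bfj_h)$, the inverse estimate $\Vert\cdot\Vert_{H(\sdiv,\Gamma)}\le Ch^{-1/2}\Vert\cdot\Vert_{\HdivO}$ on $X_h$, a triangle inequality in $\HdivO$, and \eqref{eq:galapp} together with an $O(h^{3/2})$ approximation bound in $\HdivO$. The only difference is the auxiliary operator: the paper uses the $H(\sdiv,\Gamma)$-orthogonal projection $Q_{h,\sdiv}$ with Christiansen's estimates, while you use the Raviart--Thomas interpolant. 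For your choice the commuting property is never used; all you need is the $O(h)$ bound in $H(\sdiv,\Gamma)$ and the $O(h^{3/2})$ bound in $\HdivO$, both in terms of $\Vert\bfj\Vert_{H^1(\sdiv,\Gamma)}$.
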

\begin{proof}
We consider the orthogonal projection with respect to the $H(\sdiv, \Gamma)$ scalar product into $X_h$ denoted by
$Q_{h,\sdiv}$.
This operator satisfies
\begin{align}\label{eq:Qh0def}
h^{-1/2}\Vert \bfu - Q_{h, \sdiv}\bfu \Vert_{H(\sdiv,\Gamma)} + \Vert \bfu - Q_{h, \sdiv}\bfu \Vert_{\HdivO} \, \leq \, C h^{3/2} \Vert \bfu \Vert_{H(\sdiv, \Gamma)}  
\end{align}
for any $\bfu \in H(\sdiv, \Gamma)$ (see \cite[Eq.\@ (3.41), (3.47)]{Chris04}). Now, we use the triangle inequality and apply the inverse estimates \cite[Eq.\@ (1.47), (1.48)]{Ben84} 
\begin{align*}
\Vert \bfj - \bfj_h \Vert_{H(\sdiv, \Gamma)} \, &\leq \, \Vert \bfj - Q_{h,\sdiv} \bfj \Vert_{H(\sdiv, \Gamma)} + \Vert Q_{h,\sdiv} \bfj - \bfj_h \Vert_{H(\sdiv, \Gamma)} \\
\,& \leq \, \Vert \bfj - Q_{h,\sdiv} \bfj \Vert_{H(\sdiv, \Gamma)}+ h^{-1/2}\Vert Q_{h,\sdiv} \bfj - \bfj_h \Vert_{\HdivO} \, .
\end{align*}
The first term can be bounded by \eqref{eq:Qh0def}. For the second term we apply another triangle inequality and use \eqref{eq:galapp} and \eqref{eq:Qh0def} to see that
\begin{align*}
\Vert Q_{h,\sdiv} \bfj - \bfj_h \Vert_{\HdivO} \, &\leq \, \Vert Q_{h,\sdiv} \bfj - \bfj \Vert_{\HdivO} + \Vert \bfj- \bfj_h \Vert_{\HdivO} \\
&\leq \, C h^{3/2}  \frac{|s|^6}{(\real s)^3} \Vert \widehat{\bfg} \Vert_{H^{7/2}(\Gamma)^3} \, .
\end{align*}
This yields \eqref{eq:jbound}.
\end{proof}

We consider now the dual problem as also done in \cite[Thm.\@ 4.11]{BalBanSauVeit13}.
\begin{lem}\label{lem:dual}
Let $\bfz \in \Omega$ and define $\ell_i(\bfv) = \Sop_i(s)\bfv(\bfz) = \delta_\bfz \Sop_i(s)\bfv$, where $\Sop_i(s)\bfv$ denotes the $i$-th component of $\Sop(s)\bfv$.
Let $\widehat{\bfw}_i\in \HdivO$ and $\widehat{\bfw}_{i,h} \in X_h$ satisfy the problems
\begin{subequations}
\begin{align}
\overline{\langle \widehat{\bfw}_i, \Vop(s)\bfv \rangle_{\bfnu}} \, &= \, \ell_i(\bfv) \quad \text {for all } \bfv \in \HdivO \, , \label{def:wi} \\
\overline{\langle \widehat{\bfw}_{i,h}, \Vop_h(s)\bfv_h \rangle_{\bfnu}} \, &= \, \ell_i(\bfv_h) \quad \text {for all } \bfv_h \in X_h \, . \label{def:wih}
\end{align}
\end{subequations}
Under the assumption that $\Gamma$ is at least $C^6$, the function $\widehat{\bfw}_i \in H_t^{9/2}(\Gamma)$ and satisfies
\begin{align}\label{eq:wbound}
 \Vert \widehat{\bfw}_i \Vert_{H^{m+3/2}(\Gamma)^3} \, \leq \, C(\sigma,d_\bfz) |s|^{m+6}\rme^{-d_\bfz \real s } \quad \text{for any } s \in \C\text{ with } \real s > \sigma > 0\, ,
\end{align}
where $d_\bfz=\mathrm{dist}(\bfz,\Gamma)$ for $m=1,2,3$. Moreover, we obtain the bound
\begin{align}\label{eq:dualbounds}
\Vert \widehat{\bfw}_i - \widehat{\bfw}_{i,h}\Vert_{H(\sdiv,\Gamma)} \, \leq \, C(\sigma,d_\bfz)h |s|^{11}\rme^{-d_\bfz \real s} \, .
\end{align}
\end{lem}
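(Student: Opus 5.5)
We identify $\widehat{\bfw}_i$ explicitly, bound it through the regularity results of Section~\ref{sec:setting}, and then transfer the discretization error estimate of Lemma~\ref{lem:jmjh} to the dual problem.

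\textbf{Identifying the dual solution.} The functional $\ell_i(\bfv)=\delta_\bfz \Sop_i(s)\bfv$ is an integral of $\bfv$ against the $i$-th row of the kernel $s^{-1}\curl_\bfx^2(\Phi_s(\bfx,\cdot)I_3)|_{\bfx=\bfz}$. We therefore rewrite $\ell_i(\bfv)=\overline{\langle \widehat{\bfk}_i,\bfv\rangle_{\bfnu}}$ with an explicit density $\widehat{\bfk}_i$ on $\Gamma$. It is built from the tangential part of that kernel row, rotated by $\bfnu\times$ so that it matches the pairing \eqref{eq:nuskp}. Since $\bfz\notin\Gamma$, the kernel is $C^\infty$ in $\bfy\in\Gamma$, and its tangential trace is as smooth as $\Gamma$ allows ($C^6$ suffices for $H^{9/2}$).

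By the symmetry of the Maxwell single layer operator with respect to $\langle\cdot,\cdot\rangle_{\bfnu}$ (transposition, up to conjugation and $s\mapsto\bar s$), problem \eqref{def:wi} becomes $\Vop(\tilde s)\widehat{\bfw}_i=\widehat{\bfk}_i$ for $\tilde s\in\{s,\bar s\}$. Hence $\widehat{\bfw}_i=\Vop^{-1}(\tilde s)\widehat{\bfk}_i$.

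\textbf{Regularity bound \eqref{eq:wbound}.} Applying \eqref{eq:Vhighbound} with index $m+1$ (this needs $\Gamma\in C^{m+2}$, i.e.\ $C^5$ for $m=3$) gives
\[
\Vert\widehat{\bfw}_i\Vert_{H^{m+3/2}(\Gamma)^3}\le C\frac{|s|^2}{\real s}\Vert\widehat{\bfk}_i\Vert_{H^{m+5/2}(\Gamma)^3}.
\]
It remains to bound $\widehat{\bfk}_i$ in $H^{m+5/2}$, and $C^{m+3}$ is enough for that. Each tangential derivative of $\Phi_s(\bfz,\cdot)$ costs a factor $|s|$ (for $\real s>\sigma$, with $d_\bfz$ fixed). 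The kernel already carries $|s|^{-1}\cdot|s|^2$ from two derivatives, by \eqref{eq:Phiest}. Interpolating, the $H^{m+5/2}$ norm picks up roughly $|s|^{m+3}$ extra, up to rounding.

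Together with $|s|^2/\real s\le C_\sigma|s|^2$ and the exponential factor $\rme^{-d_\bfz\real s}$, this gives at most $|s|^{m+6}\rme^{-d_\bfz\real s}$. This is the generous exponent claimed. I expect the main care is here: tracking the $s$-powers honestly through the fractional Sobolev norm, which I would do by bounding the integer norm $H^{m+3}$ and using $H^{m+3}\subset H^{m+5/2}$.

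\textbf{Galerkin error \eqref{eq:dualbounds}.} The discrete problem \eqref{def:wih} is the Galerkin discretization of the adjoint equation. It has the same structure as $\bfj_h=\Vop_h^{-1}P_h'\widehat{\bfg}$, with $\widehat{\bfg}$ replaced by $\widehat{\bfk}_i$ and $\Vop$ replaced by its adjoint. The adjoint inherits the coercivity \eqref{eq:Vcoerc}, so Lemma~\ref{lem:jmjh} and \eqref{eq:galapp} apply verbatim. This gives
\[
\Vert\widehat{\bfw}_i-\widehat{\bfw}_{i,h}\Vert_{H(\sdiv,\Gamma)}\le Ch\frac{|s|^6}{(\real s)^3}\Vert\widehat{\bfk}_i\Vert_{H^{7/2}(\Gamma)^3}.
\]
The data norm satisfies $\Vert\widehat{\bfk}_i\Vert_{H^{7/2}}\le C|s|^{5}\rme^{-d_\bfz\real s}$, by the same count as above with $m=1$. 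Since $\real s>\sigma$, the product is $Ch|s|^{11}\rme^{-d_\bfz\real s}$, matching \eqref{eq:dualbounds}.

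The point needing verification here is that the proof of Lemma~\ref{lem:jmjh} (Céa's lemma, the $Q_{h,\sdiv}$ projection, the inverse estimates) uses only coercivity and continuity of $\Vop$. This holds equally for the adjoint operator.
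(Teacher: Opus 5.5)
Your proposal is correct and follows essentially the paper's proof. You identify $\widehat{\bfw}_i$ as $\Vop^{-1}(\overline{s})$ applied to the tangential trace of the smooth kernel row (the paper writes this as $\overline{\Vop^{-1}(s)\gamma_t K_i(\bfz,\cdot)}$), combine \eqref{eq:Vhighbound} with trace bounds of order $|s|^{k+1}\rme^{-d_\bfz\real s}$ in $H^{k-1/2}(\Gamma)^3$, and obtain \eqref{eq:dualbounds} from the C\'ea/Lemma~\ref{lem:jmjh} argument, which you invoke directly with data $\widehat{\bs k}_i$ to reach the same $h|s|^{11}$. The only slip is bookkeeping: your displayed estimate is \eqref{eq:Vhighbound} with index $m+2$ rather than $m+1$, so it needs $\Gamma\in C^{m+3}$ (exactly the assumed $C^6$ for $m=3$), not $C^{m+2}$.
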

\begin{proof}
Taking slight notational differences into account, one can proceed as in \cite[Proof of Thm.\@ 9]{BanSau12} to see that
\begin{align*}
\widehat{\bfw}_i \, = \, \overline{\Vop^{-1}(s) \gamma_t K_i(\bfz, \cdot)}\, , \quad \text{where} \quad K_i(\bfz, \bfy) \, = \, -s \Phi_s(\bfz, \bfy) \bfe_i - \frac{1}{s}\frac{\partial}{\partial x_i} \sgrad_\bfy \Phi_s(\bfx, \bfy)\, .
\end{align*}
For $\bfz$ away from $\Gamma$, the function $K_i$ is smooth and straightforward computation yields (see \cite[p.\@ 13]{BanSau12})
\begin{align*}
\Vert \gamma_t K_i(\bfz, \cdot) \Vert_{H^{k-1/2}(\Gamma)^3} \, \leq \, C(d_\bfz) \rme^{-d_\bfz \real s} |s|^{k+1}\, ,
\end{align*}
which, together with \eqref{eq:Vhighbound}, yields the bounds on $\widehat{\bfw}_i$ in \eqref{eq:wbound}.
The bounds in \eqref{eq:dualbounds} are obtained as follows. 
For the bound in the energy norm, one proceeds by using C\'ea's lemma together with \cite[Thm.\@ 14]{BufHip03} as cited earlier and obtains
\begin{align*}
\Vert \widehat{\bfw}_i - \widehat{\bfw}_{i,h}\Vert_{\HdivO} \, \leq \, C h^{3/2} \frac{|s|^4}{\real s}\Vert \widehat{\bfw}_i\Vert_{H^1(\sdiv, \Gamma)}  
\, \leq \, C(\sigma,d_\bfz)h^{3/2} |s|^{11}\rme^{-d_\bfz \real s}\, ,
\end{align*}
where we used that $H_t^{5/2}(\Gamma) \subset H^1(\sdiv,\Gamma)$.
The error estimates in \eqref{eq:dualbounds} can be obtained by proceeding as in Lemma~\ref{lem:jmjh}.
\end{proof}
By duality of $\scurl$ and $\sCurl$ it holds that 
\begin{align*}
\langle T_1(s) \widehat{\bfg}, \bfy \rangle_{L_t^2(\Gamma)}
\, = \, -\frac{1}{s} \langle h_\bfnu \sdiv( \Vop^{-1}(s) \widehat{\bfg}), \sCurl \bfy \rangle_{L^2(\Gamma)} \quad \text{for all } \bfy \in \HcurlO\, .
\end{align*}
Therefore, we approximate this scalar product via $T_{1,h}(s)\widehat{\bfg}$ with
\begin{align}\label{def:T1h}
\langle T_{1,h}(s) \widehat{\bfg}, \bfy_h \rangle_{L_t^2(\Gamma)} 
\, = \, -\frac{1}{s} \langle h_\bfnu \sdiv( \Vop_h^{-1}(s)P_h' \widehat{\bfg}), \sCurl \bfy_h \rangle_{L^2(\Gamma)} \quad \text{for all } \bfy_h \in \bfnu \times X_h\, .
\end{align}
Furthermore, we define $T_{2,h}(s)\widehat{\bfg}$ as the element in $X_h$ satisfying 
\begin{align}\label{def:T2h}
\langle T_{2,h}(s)\widehat{\bfg}, \bfx_h \rangle_{L_t^2(\Gamma)} \, = \, \langle s \bfh_\bfnu \bigl( \bfnu \times \Vop_h^{-1}(s)P_h' \widehat{\bfg}\bigr),  \bfx_h \rangle_{L_t^2(\Gamma)} \quad \text{for all } \bfx_h \in X_h\, .
\end{align}
In other terms, using the orthogonal projection $Q_{h,0} : L_t^2(\Gamma) \to X_h$ with respect to $L^2$ scalar product it holds that
$Q_{h,0}\bff_h = T_{2,h}(s)\widehat{\bfg}$, where $\bff_h = s \bfh_\bfnu \bigl( \bfnu \times \Vop_h^{-1}(s)P_h' \widehat{\bfg}\bigr)$.
\begin{rem}
We do not intend to derive error estimates of $T_1(s)\widehat{\bfg} - T_{1,h}(s)\widehat{\bfg}$ in the $\HdivO$-norm and of 
$T_2(s)\widehat{\bfg} - T_{2,h}(s)\widehat{\bfg}$ in the $L_t^2(\Gamma)$-norm.
While this would be possible through the introduction of Buffa--Christiansen functions, it is not required in order to obtain pointwise estimates in space for the domain derivative.
\end{rem}
The next lemma will be needed in order to derive the full error estimate.
\begin{lem}\label{lem:TmTh}
Let $\Gamma$ be at least $C^6$, $\widehat{\bfg} \in H_t^{7/2}(\Gamma)$, $\bfh \in C^4(\Gamma,\R^3)$ and let $\widehat{\bfw}_{i,h} \in X_h$ solve \eqref{def:wih}. 
Furthermore, let $\bfz \in \Omega$.
Then,
\begin{subequations}
\begin{align}
|\langle \widehat{\bfw}_{i,h}, (T_1(s) - T_{1,h}(s)) \widehat{\bfg} \rangle_\bfnu | \, &\leq \, C(\sigma,d_\bfz) h^2 |s|^{18} \Vert \widehat{\bfg} \Vert_{H^{7/2}(\Gamma)^3} \Vert \bfh_\nu \Vert_{C^4(\Gamma)}\rme^{-d_\bfz \real s}\, , \label{eq:T1mT1h} \\
|\langle \widehat{\bfw}_{i,h}, (T_2(s) - T_{2,h}(s)) \widehat{\bfg} \rangle_\bfnu | \, &\leq \, C(\sigma,d_\bfz) h^2 |s|^{18} \Vert \widehat{\bfg} \Vert_{H^{7/2}(\Gamma)^3} \Vert \bfh_\nu \Vert_{C^3(\Gamma)}\rme^{-d_\bfz \real s}\, , \label{eq:T2mT2h}
\end{align}
\end{subequations}
for any $s \in \C$ with $\real s > \sigma > 0$, where $d_\bfz=\mathrm{dist}(\bfz,\Gamma)$.
\end{lem}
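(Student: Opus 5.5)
We write $\bfj = \Vop^{-1}(s)\widehat{\bfg}$ and $\bfj_h = \Vop_h^{-1}(s)P_h'\widehat{\bfg}$. Both estimates will be reduced to products of two Galerkin errors, one for the primal density and one for the dual density $\widehat{\bfw}_i$. Multiplying two $O(h)$ bounds from Lemma~\ref{lem:jmjh} and Lemma~\ref{lem:dual} should give the $h^2$ factor. The $|s|$-powers then add up as $|s|\cdot |s|^6 \cdot |s|^{11} = |s|^{18}$, up to factors of $\real s\geq\sigma$, which go into the constant. The exponential factor $\rme^{-d_\bfz\real s}$ is inherited from the dual solution.

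\emph{Estimate for $T_1$.} First we express the dual pairing $\langle \widehat{\bfw}_{i,h},\cdot\rangle_\bfnu$ as an $L^2_t$ pairing against the test function $\bfy_h := \bfnu\times\widehat{\bfw}_{i,h}$, up to sign and conjugation. This $\bfy_h$ lies in $\bfnu\times X_h$, which is exactly the admissible test space in \eqref{def:T1h}. Using the duality relation for $T_1$ stated before \eqref{def:T1h} together with \eqref{def:T1h}, the pairing becomes
\begin{align*}
\frac{1}{s}\big\langle \bfh_\bfnu \sdiv(\bfj-\bfj_h), \sCurl(\bfnu\times\widehat{\bfw}_{i,h})\big\rangle_{L^2(\Gamma)} ,
\end{align*}
and $\sCurl(\bfnu\times \bfu)$ equals $\pm\sdiv\bfu$.

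Next we split $\widehat{\bfw}_{i,h} = \widehat{\bfw}_i - (\widehat{\bfw}_i-\widehat{\bfw}_{i,h})$.
\begin{itemize}
\item The term containing $\widehat{\bfw}_i-\widehat{\bfw}_{i,h}$ is bounded by Cauchy--Schwarz. We use \eqref{eq:jbound} for $\sdiv(\bfj-\bfj_h)$ and \eqref{eq:dualbounds} for the dual error. This gives $h^2|s|^{-1}|s|^6|s|^{11}\rme^{-d_\bfz\real s}$ times $\Vert\bfh_\bfnu\Vert_{L^\infty}$.
\item The term containing the exact $\widehat{\bfw}_i$ is the main obstacle, because the primal error $\bfj-\bfj_h$ on its own is only $O(h)$ in $H(\sdiv,\Gamma)$. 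Here we use Galerkin orthogonality. Moving $\bfh_\bfnu$ onto the dual side, the pairing reads $\langle \sdiv(\bfj-\bfj_h), \bfh_\bfnu\sdiv\widehat{\bfw}_i\rangle$. We then write $\bfh_\bfnu\sdiv\widehat{\bfw}_i = \sdiv\bfpsi$ for a smooth auxiliary field $\bfpsi$; existence requires a mean-zero condition on each connected component, which we would handle by a constant correction. In this way the term becomes an $H(\sdiv,\Gamma)$-type pairing of $\bfj-\bfj_h$ with a smooth function.
\item Alternatively, we compare with the $H(\sdiv)$ projection $Q_{h,\sdiv}$ and use \eqref{eq:Qh0def} on the smooth side. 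This needs $\widehat{\bfw}_i\in H_t^{9/2}(\Gamma)$ from \eqref{eq:wbound}, and $\bfh_\bfnu\in C^4$ so that $\bfh_\bfnu\sdiv\widehat{\bfw}_i$ keeps enough Sobolev regularity. That requirement is where the $C^4$ norm of $\bfh_\bfnu$ enters.
\item The remaining low-order pairing is $\langle \bfj-\bfj_h, \text{smooth}\rangle$. We convert it, via the Galerkin equation for $\Vop$, into $\langle \bfj-\bfj_h, \Vop(s)(\bfpsi-\Pi_h\bfpsi)\rangle_\bfnu$. Each of the two factors is $O(h^{3/2})$ in the energy norm by \eqref{eq:galapp} and the approximation property \eqref{eq:Qh0def}, so this part is at least $O(h^2)$.
\end{itemize}

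\emph{Estimate for $T_2$.} The argument is analogous. By \eqref{def:T2h}, $T_2-T_{2,h}$ splits into two pieces:
\begin{itemize}
\item $(I-Q_{h,0})\bff$ with $\bff = s\bfh_\bfnu(\bfnu\times\bfj)$, which is tested against $\bfnu\times\widehat{\bfw}_{i,h}$;
\item $Q_{h,0}\big(s\bfh_\bfnu\,\bfnu\times(\bfj-\bfj_h)\big)$.
\end{itemize}
In the first piece we replace $\widehat{\bfw}_{i,h}$ by $\widehat{\bfw}_i$ plus the dual error. By $L^2$-orthogonality, the part with the exact $\widehat{\bfw}_i$ equals $\langle (I-Q_{h,0})\bff, (I-Q_{h,0})(\bfnu\times\widehat{\bfw}_i)\rangle$, which is $O(h^2)$ by standard Raviart--Thomas approximation of smooth fields. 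The part with the dual error is handled by \eqref{eq:dualbounds}. In the second piece we again split $\widehat{\bfw}_{i,h}$ into $\widehat{\bfw}_i$ plus the dual error, and for the smooth part we use the duality argument from the $T_1$ case. Smoothness of $\bfh_\bfnu$ ($C^3$ suffices, since no extra $\sdiv$ appears) gives the required regularity. The $|s|$-powers are collected exactly as in the $T_1$ case.
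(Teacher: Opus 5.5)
Your skeleton matches the paper's. You rewrite the pairing via \eqref{def:T1h} with test function $\bfnu\times\widehat{\bfw}_{i,h}$, split $\widehat{\bfw}_{i,h}=\widehat{\bfw}_i+(\widehat{\bfw}_{i,h}-\widehat{\bfw}_i)$, and use Cauchy--Schwarz with \eqref{eq:jbound} and \eqref{eq:dualbounds} for the error--error term. The gap is in the exact-dual $T_1$ term $\langle\sdiv(\bfj-\bfj_h),\bfh_\bfnu\sdiv\widehat{\bfw}_i\rangle_{L^2(\Gamma)}$. Writing $\bfh_\bfnu\sdiv\widehat{\bfw}_i=\sdiv\boldsymbol{\psi}$ leaves the divergence on $\bfj-\bfj_h$. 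Your plan for the resulting $H(\sdiv,\Gamma)$ inner product $(\bfj-\bfj_h,\boldsymbol{\psi})_{H(\sdiv,\Gamma)}$ is to compare with $Q_{h,\sdiv}$, and that cannot produce the extra order. The error $\bfj-\bfj_h$ is not orthogonal to $X_h$ in the $H(\sdiv,\Gamma)$ inner product; its only orthogonality is the Galerkin relation for $\Vop(s)$. So you may not replace $\boldsymbol{\psi}$ by $\boldsymbol{\psi}-Q_{h,\sdiv}\boldsymbol{\psi}$. If instead you put $Q_{h,\sdiv}$ on the primal side, then $(\bfj-Q_{h,\sdiv}\bfj,\boldsymbol{\psi})_{H(\sdiv,\Gamma)}$ is $O(h^2)$. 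The discrete remainder $(Q_{h,\sdiv}\bfj-\bfj_h,\boldsymbol{\psi})_{H(\sdiv,\Gamma)}$, however, is only controlled by $\Vert Q_{h,\sdiv}\bfj-\bfj_h\Vert_{\HdivO}=O(h^{3/2})$. As written, your argument gives $h^{3/2}$ for this term, not $h^2$.

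The missing step is a single integration by parts before anything else:
$\langle\sdiv(\bfj-\bfj_h),\bfh_\bfnu\sdiv\widehat{\bfw}_i\rangle_{L^2(\Gamma)}=-\langle\bfj-\bfj_h,\sgrad(\bfh_\bfnu\sdiv\widehat{\bfw}_i)\rangle_{L_t^2(\Gamma)}$.
After this, the \emph{whole} term is a low-order pairing of $\bfj-\bfj_h$ with a smooth field, so your final Aubin--Nitsche bullet applies to all of it; $\boldsymbol{\psi}$ and $Q_{h,\sdiv}$ become superfluous. Concretely, as in the paper:
\begin{itemize}
\item rewrite the pairing as a $\bfnu$-pairing with $\scurl(\bfh_\bfnu\sdiv\widehat{\bfw}_i)$;
\item insert $\Vop(s)\Vop^{-1}(s)$ and use $\Vop(s)^*=-\overline{\Vop(s)}$ to move $\Vop(s)$ onto $\bfj-\bfj_h$;
\item subtract an arbitrary $\bfx_h\in X_h$ by Galerkin orthogonality.
\end{itemize}
The function to approximate is then $\overline{\Vop^{-1}(s)}\scurl(\bfh_\bfnu\sdiv\widehat{\bfw}_i)$, not a field with prescribed divergence. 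Its $H(\sdiv,\Gamma)$-regularity comes from \eqref{eq:Vhighbound} applied to an $H^{5/2}$ field. That is exactly where $\widehat{\bfw}_i\in H_t^{9/2}(\Gamma)$, $\bfh_\bfnu\in C^4$ and the extra powers of $|s|$ enter; the $|s|$-count here is not the product of your two $O(h)$ bounds.

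Your $T_2$ argument is sound, and more careful than the paper's, which silently drops $Q_{h,0}$. In your second piece, though, the exact-dual term is $\langle Q_{h,0}(\bfnu\times\widehat{\bfw}_i),s\bfh_\bfnu\,\bfnu\times(\bfj-\bfj_h)\rangle_{L_t^2(\Gamma)}$, which is not yet a smooth pairing. Split once more, $Q_{h,0}=I-(I-Q_{h,0})$; the $(I-Q_{h,0})$ part is $O(h)\cdot O(h)$ in $L^2$. The same Aubin--Nitsche step then handles the remaining term $\overline{s}\langle\bfh_\bfnu\widehat{\bfw}_i,\bfj-\bfj_h\rangle_{L_t^2(\Gamma)}$.
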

\begin{proof}
Since $\sCurl (\bfnu \times \bfu) = \sdiv(\bfu)$ for all $\bfu \in \HdivO$ (see, e.g., \cite[p.\@ 261]{KiHe15}) there holds that
\begin{align*}
\langle \widehat{\bfw}_{i,h}, (T_1(s) - T_{1,h}(s)) \widehat{\bfg} \rangle_\bfnu \, &= \, -\frac{1}{s}\langle \sCurl (\bfnu \times \widehat{\bfw}_{i,h}), \bfh_\bfnu \sdiv(\bfj - \bfj_h) \rangle_{L^2(\Gamma)} \\
\, &= \, -\frac{1}{s}\Big( \langle \sdiv (\widehat{\bfw}_{i,h} - \widehat{\bfw}_i), \bfh_\bfnu \sdiv(\bfj - \bfj_h) \rangle_{L^2(\Gamma)} \\
&\,  \phantom{= -\frac{1}{s}\Big(}  + \langle \sdiv \widehat{\bfw}_{i}, \bfh_\bfnu \sdiv(\bfj - \bfj_h) \rangle_{L^2(\Gamma)}\Big)\, .
\end{align*}
For the first term we use the Cauchy--Schwarz inequality, \eqref{eq:jbound} and \eqref{eq:wbound} to get that
\begin{align*}
|\langle \sdiv (\widehat{\bfw}_{i,h} - \widehat{\bfw}_i), \bfh_\bfnu \sdiv(\bfj - \bfj_h) \rangle_{L^2(\Gamma)}| \, \leq \, C(\sigma, d_\bfz) h^2 |s|^{16} \Vert \bfh_\bfnu \Vert_{C^0} \rme^{-d_\bfz \real s} \Vert \widehat{\bfg} \Vert_{H^{7/2}(\Gamma)^3}\, .
\end{align*}
For the second term we use the duality between $\sdiv$ and $\sgrad$, the property that the adjoint of $\Vop(s)$ with respect to the scalar product \eqref{eq:nuskp} fulfills $\Vop(s)^* = -\overline{\Vop(s)}$ (see \cite[Lem.\@ 5.61]{KiHe15}) and Galerkin orthogonality to see that
\begin{align*}
\langle \sdiv \widehat{\bfw}_{i}, \bfh_\bfnu \sdiv(\bfj - \bfj_h) \rangle_{L^2(\Gamma)} \, &= \, - \langle \sgrad (\bfh_\bfnu \sdiv \widehat{\bfw}_i), \bfj - \bfj_h \rangle_{L_t^2(\Gamma)} \\
\, &= \,  \langle \overline{\Vop(s)\Vop^{-1}(s)}\scurl (\bfh_\bfnu \sdiv \widehat{\bfw}_i), \bfj - \bfj_h \rangle_{\bfnu} \\
\ &= \,  -\langle \overline{\Vop^{-1}(s)}\scurl (\bfh_\bfnu \sdiv \widehat{\bfw}_i), \Vop(s)(\bfj - \bfj_h) \rangle_{\bfnu} \\
\ &= \,  -\langle \overline{\Vop^{-1}(s)}\scurl (\bfh_\bfnu \sdiv \widehat{\bfw}_i) - \bfx_h, \Vop(s)(\bfj - \bfj_h) \rangle_{\bfnu}
\end{align*}
for any $\bfx_h \in X_h$. The Cauchy--Schwarz inequality, the bound of $\Vop(s)$ from \cite[Thm.\@ 6.4]{BanSay22}, the use of \cite[Thm.\@ 14]{BufHip03} the embedding $H_t^{3/2}(\Gamma)\subset H(\sdiv,\Gamma)$ as well as the boundedness of the operators $\scurl : H^s(\Gamma) \to H_t^{s-1}(\Gamma)$ and $\sdiv : H_t^s(\Gamma) \to H^{s-1}(\Gamma)$ (see e.g. \cite[Prop.\@ 3.3]{MelWör26}) give 
\begin{align}\label{eq:comp2T1mT1h}
\begin{split}
&|\langle \sdiv \widehat{\bfw}_{i}, \bfh_\bfnu  \sdiv(\bfj - \bfj_h) \rangle_{L^2(\Gamma)}| \\
 \, &\leq \, \inf_{\bfx_h \in X_h}\Vert \overline{\Vop^{-1}(s)}\scurl (\bfh_\bfnu \sdiv \widehat{\bfw}_i) - \bfx_h \Vert_{\HdivO} \Vert \Vop(s)(\bfj - \bfj_h)  \Vert_{\HdivO} \\
 \, &\leq \, C(\sigma) h^2 |s|^8 \Vert \widehat{\bfg} \Vert_{H^{7/2}(\Gamma)^3} \Vert  \overline{\Vop^{-1}(s)}\scurl (\bfh_\bfnu \sdiv \widehat{\bfw}_i) \Vert_{H(\sdiv, \Gamma)} \\
 \, &\leq \, C(\sigma) h^2 |s|^{10} \Vert \widehat{\bfg} \Vert_{H^{7/2}(\Gamma)^3} \Vert  \scurl (\bfh_\bfnu \sdiv \widehat{\bfw}_i) \Vert_{H^{5/2}(\Gamma)^3} \\
  \, &\leq \, C(\sigma) h^2 |s|^{10} \Vert \widehat{\bfg} \Vert_{H^{7/2}(\Gamma)^3} \Vert \bfh_\bfnu \Vert_{C^4} \Vert  \widehat{\bfw}_i \Vert_{H^{9/2}(\Gamma)^3} \, ,
  \end{split}
\end{align}
where $C(\sigma)$ depends on $\sigma$, where $\real s > \sigma >0$. Applying the bound in \eqref{eq:wbound} yields the bound in \eqref{eq:T1mT1h}. To prove \eqref{eq:T2mT2h} we compute, similarly as before
\begin{align*}
\langle \widehat{\bfw}_{i,h}, (T_2(s) - T_{2,h}(s)) \widehat{\bfg} \rangle_\bfnu 
\, = \, \overline{s}\left( \langle \bfh_\bfnu( \widehat{\bfw}_{i,h} - \widehat{\bfw}_{i}), \bfj - \bfj_h\rangle_{L_t^2(\Gamma)} + \langle \bfh_\bfnu \widehat{\bfw}_{i}, \bfj - \bfj_h\rangle_{L_t^2(\Gamma)} \right)
\end{align*}
and estimate both terms. For the first one we use Cauchy--Schwarz, \eqref{eq:jbound} and \eqref{eq:wbound} and see that
\begin{align*}
|\langle \bfh_\bfnu( \widehat{\bfw}_{i,h} - \widehat{\bfw}_{i}), \bfj - \bfj_h\rangle_{L_t^2(\Gamma)}| \, &\leq \, C(\sigma, d_\bfz) h^2 |s|^{17} \Vert \bfh_\bfnu \Vert_{C^0} \rme^{-d_\bfz \real s} \, .
\end{align*}
Using the same results as in the computation of \eqref{eq:comp2T1mT1h} shows that
\begin{align*}
|\langle \bfh_\bfnu \widehat{\bfw}_{i}, \bfj - \bfj_h\rangle_{L_t^2(\Gamma)}| \, &\leq \, C(\sigma) |s|^8 h^{2} \Vert \widehat{\bfg} \Vert_{H^{7/2}(\Gamma)^3} \Vert \overline{V^{-1}(s)} \bfh_\bfnu \bfnu\times \widehat{\bfw}_i \Vert_{H(\sdiv,\Gamma)} \\
\, &\leq \, C(\sigma) |s|^{10} h^{2} \Vert \widehat{\bfg} \Vert_{H^{7/2}(\Gamma)^3} \Vert \bfh_\bfnu \Vert_{C^3} \Vert  \widehat{\bfw}_i \Vert_{H^{5/2}(\Gamma)^3} \, .
\end{align*}
The bound in \eqref{eq:wbound} now ultimately yields \eqref{eq:T2mT2h}.
\end{proof}

\subsection{Full error estimation}\label{sec:FE}
Finally we can estimate the full error $|S(\partial_t) \bfphi - S(\partial_t^\tau)\bfphi_{h,\tau}|$ with $\bfphi$ and $\bfphi_{h,\tau}$ as in \eqref{eq:phis}. 
We return to the split in \eqref{eq:spliterr}. To estimate the first difference, we proceed similar to \cite[Proof of Thm.\@ 4.11]{BalBanSauVeit13}, while for the second difference we use the estimates we proved earlier.
The space-discrete operator $L_h(s): \HdivO \to X_h$ is defined by
\begin{align}\label{eq:Lhsplit}
L_h(s) \widehat{\bfg} \, = \, T_{1,h}(s)\widehat{\bfg} - T_{2,h}(s)\widehat{\bfg}
\end{align}
with $T_{1,h}(s)\widehat{\bfg}$ and $T_{2,h}(s)\widehat{\bfg}$ defined in \eqref{def:T1h} and \eqref{def:T2h}.
\begin{thm}
Let a convolution quadrature method in time based on a Radau IIA method with $m$ stages and a Galerkin approximation in space with lowest order Raviart-Thomas functions be applied to approximate the temporal domain derivative $\bfE'$ from \eqref{eq:ddmwtd}.
In particular, let the time and space discretized temporal domain derivative be denoted by $\bfE_{h,\tau}' = S(\partial_t^\tau)\bfphi_{h,\tau}$, where $\bfphi_{h,\tau}$ is defined in \eqref{eq:phis} with the operator $L_h$ defined in the Laplace domain in \eqref{eq:Lhsplit}.
Let $\Gamma$ be at least $C^6$ and let $\gamma_t \Ei \in H_0^r(0,T; H_t^{11/2}(\Gamma))$ with $r>2m+39/2$.
Then, for any $\bfz \in \Omega$ it holds that
\begin{align*}
\left|\bfE'(\bfz, t_n) - (\bfE_{h,\tau}')_n(\bfz) \right| \, \leq \, C(h^{2} + \tau^{2m-1})\Vert \gamma_t \Ei \Vert_{H_0^{r}(0,T ; H_t^{11/2}(\Gamma))}\, .
\end{align*}
\end{thm}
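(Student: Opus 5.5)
We follow the split \eqref{eq:spliterr} and bound the two differences separately. Both bounds will come from Laplace-domain estimates of the form $C(\sigma,d_\bfz)\,|s|^\kappa \rme^{-d_\bfz\real s}$, combined with \cite[Thm.\@ 3]{BLM11}. The exponential decay in $\real s$ is what preserves the full order $\tau^{2m-1}$; the power $\kappa$ fixes the required temporal regularity $r>2m+\kappa+3/2$.

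\textbf{First difference.} The term $\bfE'(\bfz,t_n)-(\widetilde{\bfE}'_{h,\tau})_n(\bfz)$ carries the exact load $L(\partial_t)\gamma_t\Ei$. We proceed as in \cite[Proof of Thm.\@ 4.11]{BalBanSauVeit13}. In the Laplace domain set $\widehat{\bfg}=L(s)\gamma_t\LEi$, $\bfj=\Vop^{-1}(s)\widehat{\bfg}$ and $\bfj_h=\Vop_h^{-1}(s)P_h'\widehat{\bfg}$. Using the dual problems of Lemma~\ref{lem:dual} and Galerkin orthogonality, we get
\begin{align*}
\delta_\bfz\Sop_i(s)(\bfj-\bfj_h)=\overline{\langle\widehat{\bfw}_i-\widehat{\bfw}_{i,h},\Vop(s)(\bfj-\bfj_h)\rangle_\bfnu}.
\end{align*}
This is bounded by $\Vert\widehat{\bfw}_i-\widehat{\bfw}_{i,h}\Vert_{\HdivO}\,\Vert\Vop(s)\Vert\,\Vert\bfj-\bfj_h\Vert_{\HdivO}$. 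Each factor is $\mathcal O(h^{3/2})$ by the energy-norm estimates in the proof of Lemma~\ref{lem:dual} and by \eqref{eq:galapp}, so together they give at least $h^2$.

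The load itself must be controlled in $H^{7/2}$. Using \eqref{eq:Talter}, \eqref{eq:Vhighbound} and the mapping properties of $\scurl$ and $\sdiv$, the load $L(s)\gamma_t\LEi$ lies in $H_t^{7/2}(\Gamma)$ with norm $\lesssim |s|^{k}\Vert\gamma_t\LEi\Vert_{H^{11/2}}$ for some $k$. This is what uses $\Gamma\in C^6$ and the $H^{11/2}$ data. The outcome is a pointwise Laplace-domain bound $C h^2|s|^{\kappa_1}\rme^{-d_\bfz\real s}$.

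The temporal part is handled as in Theorem~\ref{thm:sintime}: we treat $\delta_\bfz\Sop\Vop_h^{-1}P_h'L$ as a single transfer operator with exponential decay and apply \cite[Thm.\@ 3]{BLM11}. This gives $\tau^{2m-1}$, uniformly in $h$ thanks to the $h$-uniform bound on $\Vop_h^{-1}$.

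\textbf{Second difference.} The term $(\widetilde{\bfE}'_{h,\tau})_n-(\bfE'_{h,\tau})_n$ is exactly $S(\partial_t^\tau)\Vop_h^{-1}(\underline{\partial_t^\tau})\bigl(P_h'L-L_h\bigr)(\underline{\partial_t^\tau})\gamma_t\Ei$ evaluated at $\bfz$. In the Laplace domain, by \eqref{def:wih}, its $i$-th component is
\begin{align*}
\delta_\bfz\Sop_i\Vop_h^{-1}(P_h'L-L_h)\widehat{\bfa}=\overline{\langle\widehat{\bfw}_{i,h},(L-L_h)\widehat{\bfa}\rangle_\bfnu},
\end{align*}
with the test function $\Vop_h^{-1}(P_h'L-L_h)\widehat{\bfa}\in X_h$. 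Here we use that the $P_h'$-projection is invisible when tested against discrete functions.

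Splitting $L-L_h=(T_1-T_{1,h})-(T_2-T_{2,h})$, Lemma~\ref{lem:TmTh} applied with $\widehat{\bfg}=\gamma_t\LEi$ gives the bound $C h^2|s|^{18}\rme^{-d_\bfz\real s}\Vert\gamma_t\LEi\Vert_{H^{7/2}}$. This is a family of linear functionals on $H_t^{11/2}\subset H_t^{7/2}$ that is analytic in $s$ and decays exponentially. Applying \cite[Thm.\@ 3]{BLM11}, or simply the time-continuous estimate at the stage level, converts it into $Ch^2\Vert\gamma_t\Ei\Vert_{H_0^r}$ with $r>18+3/2$. This matches the threshold $r>2m+39/2$ once the CQ error of this $h$-small term is absorbed.

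\textbf{Expected obstacle.} The delicate step is the bookkeeping of the combined discretization. Because $\bfE'_{h,\tau}$ uses discrete stage operators throughout, the second difference should be written as CQ applied to the Laplace-domain operator family of the previous paragraph. We then need both pieces: (i) the continuous-time error $h^2$ from the $|s|^{18}$ bound, and (ii) the CQ error $\tau^{2m-1}\cdot h^2$, which is harmless. The largest $|s|$-power, $\kappa=18$, is what dictates $r>2m+39/2$. Summing the two differences gives $C(h^2+\tau^{2m-1})$.
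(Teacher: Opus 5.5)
Your overall plan matches the paper's: the split \eqref{eq:spliterr}, duality for the spatial error, and Lemma~\ref{lem:TmTh} with \cite[Thm.\@ 3]{BLM11} for the load error. However, the second difference is misidentified, and a term is lost. By \eqref{eq:phis}, $\widetilde{\bfphi}_{h,\tau}$ contains the continuous-time load $L(\partial_t)\gamma_t\Ei$, as you yourself say in the first step. So the second difference is the restriction to the time points of
\begin{align*}
\delta_\bfz\Sop(\underline{\partial_t^\tau})\Vop_h^{-1}(\underline{\partial_t^\tau})\bigl(P_h'L(\partial_t)-L_h(\underline{\partial_t^\tau})\bigr)\gamma_t\Ei \, ,
\end{align*}
and not of the expression with $(P_h'L-L_h)(\underline{\partial_t^\tau})$ that you write down.

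Your duality argument via \eqref{def:wih} and Lemma~\ref{lem:TmTh} handles only the part in which $L$ is also discretized (the paper's $P_1$). The remainder
\begin{align*}
P_2 \, = \, \delta_\bfz\Sop(\underline{\partial_t^\tau})\Vop_h^{-1}(\underline{\partial_t^\tau})P_h'\bigl(L(\partial_t)-L(\underline{\partial_t^\tau})\bigr)\gamma_t\Ei
\end{align*}
is dropped. It is not harmless by inspection. $L(s)$ grows polynomially with no exponential decay, so bounding $L(\partial_t)-L(\underline{\partial_t^\tau})$ on its own only yields a reduced order; the decaying factor $\delta_\bfz\Sop\Vop_h^{-1}$ must stay attached.

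The paper adds and subtracts $\delta_\bfz\Sop(\partial_t)\Vop_h^{-1}(\partial_t)P_h'L(\partial_t)\gamma_t\Ei$ and applies \cite[Thm.\@ 3]{BLM11} twice. The first application is to the full composite $\delta_\bfz\Sop\Vop_h^{-1}P_h'L$, with bound $|s|^7\rme^{-d_\bfz\real s}$ (cf.\ \eqref{eq:Eppwbound}). The second is to $\delta_\bfz\Sop\Vop_h^{-1}P_h'$, with bound $|s|^4\rme^{-d_\bfz\real s}$ and data $L(\partial_t)\gamma_t\Ei$. Together these give $\tau^{2m-1}$.

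Alternatively, you can repair your own proof by taking as intermediate the convolution quadrature of the whole composite $\delta_\bfz\Sop\Vop_h^{-1}P_h'L$ applied to $\gamma_t\Ei$. This is what your ``single transfer operator'' remark in the first step tacitly assumes. Then the first difference is the spatial error plus one full-order CQ error, the second difference is exactly your term, and no $P_2$ appears. As written, your two steps use incompatible intermediates.

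A second, repairable error concerns regularity. For $\gamma_t\LEi\in H_t^{11/2}(\Gamma)$ the load $L(s)\gamma_t\LEi$ is not in $H_t^{7/2}(\Gamma)$. In $T_1(s)=\frac1s\scurl\bigl(\bfh_\bfnu\sdiv\Vop^{-1}(s)\,\cdot\,\bigr)$, each of $\Vop^{-1}(s)$, $\sdiv$ and $\scurl$ costs one derivative. So the load lies only in $H^{5/2}$, and $\widehat{\bfphi}=\Vop^{-1}(s)L(s)\gamma_t\LEi$ only in $H_t^{3/2}(\Gamma)$. Hence \eqref{eq:galapp}, which needs data in $H^{7/2}$, does not give you $h^{3/2}$ for the primal factor.

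You do not need it. As in the paper, $\widehat{\bfphi}\in H(\sdiv,\Gamma)$ already gives $\Vert\Vop(s)(\widehat{\bfphi}-\widehat{\bfphi}_h)\Vert_{\HdivO}\le C|s|^6h^{1/2}\Vert\widehat{\bfphi}\Vert_{H(\sdiv,\Gamma)}$. Together with $h^{3/2}$ from the dual side, this yields $h^2$.

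On the dual side, compare with $\Pi_{h,\sdiv}\widehat{\bfw}_i$ rather than with $\widehat{\bfw}_{i,h}$. The dual Galerkin error carries an extra C\'ea factor: it is bounded by $h^{3/2}|s|^{11}\rme^{-d_\bfz\real s}$ in the proof of Lemma~\ref{lem:dual}, versus $h^{3/2}|s|^{7}\rme^{-d_\bfz\real s}$ for the projection error. Using the Galerkin error raises the power of $|s|$ by four. For $m=1$ this pushes the required regularity above $r>2m+39/2$, contradicting your claim that $\kappa=18$ is the largest exponent.
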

\begin{proof}
We can split $\bfE'(\bfz, t_n) - (\bfE_{h, \tau}')_n(\bfz)$ as in \eqref{eq:spliterr}. We start by estimating 
\begin{align*}
\left| \bfE'(\bfz, t_n) - (\widetilde{\bfE}_{h, \tau}')_n(\bfz)  \right| \, \leq \, \left| \bfE'(\bfz, t_n) - \bfE'_h(\bfz,t_n)  \right| + \left| \bfE'_h(\bfz,t_n) - (\widetilde{\bfE}_{h, \tau}')_n(\bfz)  \right|\, ,
\end{align*}
where $\bfE'_h(\bfz,t_n) = \Sop(\partial_t) \bfphi_h$ with $\bfphi_h = \Vop_h^{-1}(\partial_t) P_h' L(\partial_t)\gamma_t \Ei$.
Due to the semi-discrete error in time from Theorem~\ref{thm:sintime} we get the bound
\begin{align*}
\left| \bfE'_h(\bfz,t_n) - (\widetilde{\bfE}_{h, \tau}')_n(\bfz)  \right| \, \leq \, C \tau^{2m-1} \Vert \gamma_t \Ei \Vert_{H_0^r(0,T; H^{3/2}(\Gamma)^3)}
\end{align*}
for $r >2m+17/2$. For the other term we apply Nitsche's trick. Let $\widehat{\bfw}_i$ solve \eqref{def:wi}. Then,
\begin{align*}
\Sop_i(s)(\widehat{\bfphi} - \widehat{\bfphi}_h)(\bfz) \, = \, \overline{\langle \widehat{\bfw}_i, \Vop(s)(\widehat{\bfphi} - \widehat{\bfphi}_h) \rangle_\bfnu} \, = \, \overline{\langle \widehat{\bfw}_i - \Pi_{h,\sdiv}\widehat{\bfw}_i, \Vop(s)(\widehat{\bfphi} - \widehat{\bfphi}_h) \rangle_\bfnu}\, ,
\end{align*}
where the last step follows from Galerkin orthogonality.
Using Cauchy--Schwarz, \cite[Thm.\@ 14]{BufHip03} and the bounds on $\widehat{\bfw}_i$ from Lemma~\ref{lem:dual} gives 
\begin{align*}
\left| \LEt'(\bfz) - \LEt_h'(\bfz)  \right| \, \leq \, C(\sigma, d_\bfz) h^{3/2} |s|^{7} \rme^{-d_\bfz \real s} \Vert \Vop(s)(\widehat{\bfphi} - \widehat{\bfphi}_h) \Vert_{\HdivO} \, .
\end{align*}
Estimating using the bound for $\Vop(s)$ from \cite[Thm.\@ 6.4]{BanSay22} and the estimate \eqref{eq:Vcoerc} (see also \cite[Sec.\@ 2.8]{BanSay22}) together with \cite[Thm.\@ 14]{BufHip03} yields
\begin{align*}
\Vert \Vop(s)(\widehat{\bfphi} - \widehat{\bfphi}_h) \Vert_{\HdivO} \, \leq \, C(\sigma) |s|^6 h^{1/2}\Vert \widehat{\bfphi} \Vert_{H(\sdiv,\Gamma)}\, .
\end{align*}
Using the representation in \eqref{eq:Talter} and estimating as in the proof of Lemma~\ref{lem:TmTh} yields that
\begin{align*}
\Vert \widehat{\bfphi} \Vert_{H(\sdiv,\Gamma)} \, \leq \, C(\sigma) |s|^5 \Vert \bfh_\bfnu \Vert_{C^4} \Vert \gamma_t \LEi \Vert_{H^{11/2}(\Gamma)^3}
\end{align*}
and therefore,
\begin{align*}
\left| \LEt'(\bfz) - \LEt_h'(\bfz)  \right| \, \leq \, C(\sigma, d_\bfz) h^{2} |s|^{18} \rme^{-d_\bfz \real s} \Vert \bfh_\bfnu \Vert_{C^4} \Vert \gamma_t \LEi \Vert_{H^{11/2}(\Gamma)^3}
\end{align*}
for any $s$ with $\real s > \sigma > 0$. Back to the time domain, this yields
\begin{align*}
\left| \bfE'(\bfz, t_n) - \bfE'_h(\bfz,t_n)  \right| \, \leq \, C h^2 \Vert \gamma_t \Ei \Vert_{H_0^r(0,T; H_t^{7/2}(\Gamma))}
\end{align*}
for $r >37/2$, so in total we obtain that 
\begin{align}\label{eq:bound1}
\left| \bfE'(\bfz, t_n) - (\widetilde{\bfE}_{h, \tau}')_n(\bfz)  \right| \, \leq \, C ( \tau^{2m-1} + h^2 ) \Vert \gamma_t \Ei \Vert_{H_0^r(0,T; H_t^{7/2}(\Gamma))}
\end{align}
for $r > \max\{2m+17/2, 37/2 \}$.
For the second part we find
\begin{align}\label{eq:2ndterm}
(\widetilde{\bfE}'_{h,\tau})_n(\bfy) - (\bfE'_{h,\tau})_n(\bfy) \, = \, (\Sop(\partial_t^\tau)(\widetilde{\bfphi}_{h,\tau} - \bfphi_{h,\tau}(\bfy))_n
\end{align}
with $\widetilde{\bfphi}_{h,\tau}$ and $\bfphi_{h,\tau}$ from \eqref{eq:phis}. Thus, we consider the term
\begin{align}\label{eq:2ndtermlong}
\delta_\bfz\Sop(\underline{\partial_t^\tau})\Vop_h^{-1}(\underline{\partial_t^\tau})(P_h' L(\partial_t) - L_h(\underline{\partial_t^\tau})) \gamma_t\Ei\, ,
\end{align}
since the term in \eqref{eq:2ndterm} is just a restriction to some special time points of \eqref{eq:2ndtermlong}.
We split the term in \eqref{eq:2ndtermlong} into $P_1+P_2$ with
\begin{subequations}
\begin{align}
P_1 \, &:= \, \delta_\bfz\Sop(\underline{\partial_t^\tau})\Vop_h^{-1}(\underline{\partial_t^\tau})(P_h' L(\underline{\partial_t^\tau}) - L_h(\underline{\partial_t^\tau})) \gamma_t\Ei\, , \\
P_2 \, &:= \,   \delta_\bfz\Sop(\underline{\partial_t^\tau})\Vop_h^{-1}(\underline{\partial_t^\tau})(P_h' L(\partial_t) - P_h'L(\underline{\partial_t^\tau})) \gamma_t\Ei \, . \label{def:P2}
\end{align}
\end{subequations}
For the first term we consider the Laplace domain expression that is $\delta_\bfz\Sop(s)(\widehat{\bfm}^*_h - \widehat{\bfm}_h)$ with
\begin{align*}
\widehat{\bfm}^*_h \, := \Vop_h^{-1}(s)P_h'L(s) \gamma_t\LEi \in X_h  \qquad \text{and} \qquad
\widehat{\bfm}_h \, := \, \Vop_h^{-1}(s)L_h(s) \gamma_t\LEi \in X_h \, .
\end{align*}
Using \eqref{def:wih} we recognize that
\begin{align*}
\delta_\bfz\Sop_i(s)(\widehat{\bfm}^*_h - \widehat{\bfm}_h) \, = \, \overline{\langle \widehat{\bfw}_{i,h}, \Vop_h(s)(\widehat{\bfm}^*_h - \widehat{\bfm}_h) \rangle_\bfnu}
\, = \, \overline{\langle \widehat{\bfw}_{i,h}, (L(s) - L_h(s))\gamma_t\LEi \rangle_\bfnu}\, .
\end{align*}
Now we split using \eqref{eq:Lsplit} and \eqref{eq:Lhsplit} to get
\begin{align*}
\langle \widehat{\bfw}_{i,h}, (L(s) - L_h(s))\gamma_t\LEi \rangle_\bfnu \, = \, \langle \widehat{\bfw}_{i,h}, (T_1(s) - T_{1,h}(s))\gamma_t\LEi \rangle_\bfnu - \langle \widehat{\bfw}_{i,h}, (T_2(s) - T_{2,h}(s))\gamma_t\LEi \rangle_\bfnu
\end{align*}
and apply Lemma~\ref{lem:TmTh} to get
\begin{align*}
|\delta_\bfz\Sop_i(s)(\widehat{\bfm}^*_h - \widehat{\bfm}_h) | \, \leq \, 
C(\sigma,d_\bfz) h^2 |s|^{18} \Vert \LEi \Vert_{H^{7/2}(\Gamma)^3} \Vert \bfh_\nu \Vert_{C^4(\Gamma)}\rme^{-d_\bfz \real s} \, .
\end{align*}

Now we can use \cite[Thm.\@ 3]{BLM11} to get the bound
\begin{align}\label{eq:boundP1}
| (P_1)_n | \, \leq \, C h^{2}( \tau^{2m-1} + 1 ) \Vert \gamma_t \Ei \Vert_{H_0^r(0,T; H_t^{7/2}(\Gamma))} 
\end{align}
for $r>2m+39/2$. We split $P_2$ from \eqref{def:P2} into 
\begin{multline}\label{eq:P2split}
P_2 \, = \, \delta_\bfz\Sop(\partial_t)\Vop_h^{-1}(\partial_t)P_h' L(\partial_t)\gamma_t \Ei - \delta_\bfz\Sop(\underline{\partial_t^\tau})\Vop_h^{-1}(\underline{\partial_t^\tau})P_h' L(\underline{\partial_t^\tau})\gamma_t \Ei \\
+\delta_\bfz\Sop(\underline{\partial_t^\tau})\Vop_h^{-1}(\underline{\partial_t^\tau})P_h' L(\partial_t)\gamma_t \Ei - \delta_\bfz\Sop(\partial_t)\Vop_h^{-1}(\partial_t)P_h' L(\partial_t)\gamma_t \Ei \, .
\end{multline}
Due to the Laplace domain bound 
\begin{align*}
\left| \delta_\bfz\Sop(s)\Vop_h^{-1}(s)P_h' L(s)\gamma_t \LEi \right| \, \leq \, C(\sigma,\mathrm{d}_\bfx) \rme^{- \mathrm{d}_\bfx \real s}|s|^7\Vert \gamma_t \LEi \Vert_{H^{3/2}(\Gamma)^3}
\end{align*}
that we also found in \eqref{eq:Eppwbound} with $\Vop^{-1}(s)$ instead of $\Vop^{-1}_h(s)$, we can immediately apply \cite[Thm.\@ 3]{BLM11} to bound the first difference on the right hand side of \eqref{eq:P2split}.
Similarly, we have that 
\begin{align*}
\left| \delta_\bfz\Sop(s)\Vop_h^{-1}(s)P_h' \widehat{\bfg} \right| \, \leq \, C(\sigma,\mathrm{d}_\bfx) \rme^{- \mathrm{d}_\bfx \real s}|s|^4\Vert \widehat{\bfg} \Vert_{H^{3/2}(\Gamma)^3}
\end{align*}
and again, one can apply \cite[Thm.\@ 3]{BLM11} as follows. For $r-3>2m+11/2$ we find that 
\begin{multline*}
\left| \left(\delta_\bfz\Sop(\underline{\partial_t^\tau})\Vop_h^{-1}(\underline{\partial_t^\tau})P_h' L(\partial_t)\gamma_t \Ei - \delta_\bfz\Sop(\partial_t)\Vop_h^{-1}(\partial_t)P_h' L(\partial_t)\gamma_t \Ei  \right)_n\right| \\
\, \leq \, C \tau^{2m-1} \Vert L(\partial_t) \gamma_t \Ei \Vert_{H_0^{r-3}(0,T; \HdivO)} \, \leq \, C \tau^{2m-1} \Vert \gamma_t \Ei \Vert_{H_0^{r}(0,T ; H_t^{3/2}(\Gamma))}\, .
\end{multline*}
In total we get that 
\begin{align}\label{eq:boundP2}
\left| (P_2)_n \right| \, \leq \, C \tau^{2m-1} \Vert \gamma_t \Ei \Vert_{H_0^{r}(0,T ; H_t^{3/2}(\Gamma))}\, .
\end{align}
Finally we can combine the two bounds from \eqref{eq:boundP1} and \eqref{eq:boundP2} to get that
\begin{align*}
\left| (\widetilde{\bfE}'_{h,\tau})_n(\bfy) - (\bfE'_{h,\tau})_n(\bfy) \right| \, \leq \, C(h^{2} + \tau^{2m-1})\Vert \gamma_t \Ei \Vert_{H_0^{r}(0,T ; H_t^{7/2}(\Gamma))}\,. 
\end{align*}
Combining this with \eqref{eq:bound1} yields the overall bound.
\end{proof}
\section{Shape reconstructions using time-dependent near field data}\label{sec:InvScat}
In this section we test the temporal domain derivative in a shape reconstruction algorithm.
Throughout this section we consider an incident electric field $\Ei$ solving Maxwell's equations $c^{-2} \partial_t^2 \Ei + \curl^2 \Ei = 0$ in $\R^3\times (0,\infty)$, which is given by a modulated Gaussian plane wave
\begin{align}\label{eq:EiG}
\Ei(\bfx,t) \, := \, \bfA\rme^{-1/(2\sigma^2)(t - \bfx\cdot \bfd/c - t_{\mathrm{lag}})^2}\cos(2\pi f_0(t - \bfx\cdot \bfd/c)) \, ,
\end{align}
where $\bfA \in \R^3$ is the polarization, $\bfd \in S^2$ is the direction of propagation with $\bfA \cdot \bfd = 0$, $\sigma>0$ is the width of the Gaussian pulse, $ t_{\mathrm{lag}}>0$ is some time delay and $f_0$ is the center of support of the wave in the frequency domain.
Gaussian plane waves are often considered in the context of time-dependent scattering problems (see, e.g., the numerical examples in \cite{BalBanSauVeit13, DeAnCo20, LiMonWei15}).
We note that this incident wave is not causal on any bounded scattering object. 
However, with $t_\mathrm{lag}$ chosen properly, we make sure that the size of $|\Ei|$ on the boundaries of all scattering objects under consideration is less than the machine tolerance.

In this section we restrict ourselves to the reconstruction of star-shaped scattering objects, centered at the origin.
For such an object,
let measurements of the scattered wave $\Es$ solving \eqref{eq:MWEs} with $\Ei$ as in \eqref{eq:EiG} at the receivers $\mathbf{z}_j$, $j=1,\dots,M$ be given for $N_t \in \N$ equidistant time steps and let them be denoted by
$\mathbf{g} \in  \ell^2(0,N_t, \R^{3M})$.
In our shape reconstruction algorithm we develop the radius $r=r(\alpha_n^m, \beta_n^m)$ of star-shaped objects into real-valued spherical harmonics. 
To be precise, for $\xhat = \bfx/|\bfx|\in S^2$, let the spherical harmonics be given by
\begin{align*}
Y_n^m(\xhat) \, := \, \sqrt{\frac{2n+1}{4\pi} \frac{(n-|m|)!}{(n+|m|)!}}P_n^{|m|}(\cos \theta)\rme^{\rmi m \varphi}\quad \text{for } m=-n,\dots, n \text{ and } n=0,1, \dots \, .
\end{align*}
Here, the associated Legendre polynomials are defined by
$P_n^m(t) = (1-t^2)^{m/2}(d/dt)^m P_n(t)$,
where $P_n$ denotes the Legendre polynomial of degree $n$ (see, e.g., \cite[pp.~26]{ColKre19}). Moreover, the functions $Y_n^m$ form a complete orthonormal basis of $L^2(S^2)$ (see, e.g., \cite[p.~41]{KiHe15}).
Abbreviating two finite sequences of numbers by $\mathbf{\alpha}_N = (\alpha_n^m)_{n=0,\dots, N, m=0,\dots,n}$ and $\mathbf{\beta}_N = (\beta_n^m)_{n=1,\dots, N, m=1,\dots,n}$ we write for the radius of the star-shaped object
\begin{align}\label{eq:rRep}
r(\alpha_N,\beta_N; \xhat) \, = \, \sum_{n=0}^N\sum_{m=0}^n\alpha_n^m \real Y_n^m(\xhat) + \sum_{n=1}^N\sum_{m=1}^n\beta_n^m \imag Y_n^m(\xhat)\, .
\end{align}
Any star-shaped object's boundary, whose radius $r$ is parametrized by $\alpha_n^m, \beta_n^m$ is denoted by $\Gamma_{\alpha_N,\beta_N}$, where
\begin{align*}
\Gamma_{\alpha_N,\beta_N} \, := \, \{r(\alpha_N,\beta_N; \xhat) \xhat \, : \, \xhat \in S^2 \}\, .
\end{align*}
We formulate the inverse problem as a minimization problem. To be precise, the aim is to compute
$(\alpha_N^*,\beta_N^*) \, = \, \argmin_{(\alpha_N,\beta_N)} f(\alpha_N,\beta_N)$, where
\begin{align*}
 f({\alpha_N,\beta_N}) \, = \, \frac{\left\Vert (\Es_{h,\tau}[\Gamma_{\alpha_N,\beta_N}])_{j=1,\dots M} - \mathbf{g} \right\Vert_{\ell_\tau^2(0,N_t; \R^{3M})}^2}{\left\Vert \mathbf{g} \right\Vert_{\ell_\tau^2(0,N_t; \R^{3M})}^2} \,  
\end{align*}
and where $(\Es_{h,\tau}[\Gamma_{\alpha_N,\beta_N}])_{j=1,\dots M}$ denotes the scattered electric field corresponding to the scattering object $\Gamma_{\alpha_N,\beta_N}$, which is approximated by a 3-stage Radau IIA Runge--Kutta convolution quadrature method in time and a Galerkin discretization in space and evaluated at $\bfz_j$, $j=1,\dots,M$.
To stabilize the optimization we introduce a regularization term $\psi$ , that aims at penalizing high oscillations of the boundary of the object (see, e.g., \cite{Hag19,Kirsch93}). Finally, we minimize the regularized function
\begin{align}\label{eq:outputfun}
\gamma(\alpha_N,\beta_N) = f(\alpha_N,\beta_N) + \alpha_{\mathrm{reg}} \psi(\alpha_N,\beta_N)
\end{align}
for some regularization parameter $\alpha_{\mathrm{reg}}>0$ using the Gau\ss--Newton algorithm.
In our algorithm we pick $\alpha_{\mathrm{reg}}$ by trial and error. 
If the size of the Gau\ss--Newton update falls below a certain threshold and the functional in \eqref{eq:outputfun} is dominated by the regularization term we restart the algorithm with the updated regularization parameter $\alpha_{\mathrm{reg}}/100$. The algorithm terminates if the update size falls below a threshold and the functional \eqref{eq:outputfun} is dominated by $f$ or, when $f$ itself falls below 0.01.

The approximation of both $(\Es_{h,\tau}[\Gamma_{\alpha_N,\beta_N}])_{j=1,\dots M}$ and $(\Et'_{h,\tau}[\Gamma_{\alpha_N,\beta_N}])_{j=1,\dots M}$ is done by using Raviart--Thomas elements as test functions and their rotated counter parts as trial functions on the boundary of a polyhedron within the \textit{bempp} framework (see \cite{Betcke2021}). 
We note that these meshes are only Lipschitz and do not meet the high spatial regularity requirements that are needed to obtain the full spatial convergence from Section~\ref{sec:FE} (see also \cite{BalBanSauVeit13}). 
Nevertheless, the reconstructions that we perform work seamlessly.

The just-in-time compilation of \textit{bempp} is used to evaluate time-domain waves on the boundary efficiently.
Moreover, we use the all-at-once formulation for convolution quadrature from \cite{BanSay22}. 
This transforms time-domain problems to several, decoupled frequency domain problems, which can be solved in parallel.
To solve those problems, we apply the LU-decomposition to the system matrices. 
For the inverse problem those LU-tuples can thus be used in every iteration step to assemble the Jacobian corresponding to \eqref{eq:outputfun} efficiently:
As we see from the definition of the domain derivative in \eqref{eq:ddmwtd}, different perturbations $\bfh$ only affect the right hand side. Therefore, once all $s$-dependent LU-tuples are available, even solving a massive number of linear systems becomes feasible.

The following examples were performed on the Roihu supercomputer provided by CSC – IT Center for Science.

\begin{figure}[t!]
\centering 
\includegraphics[scale=.28]{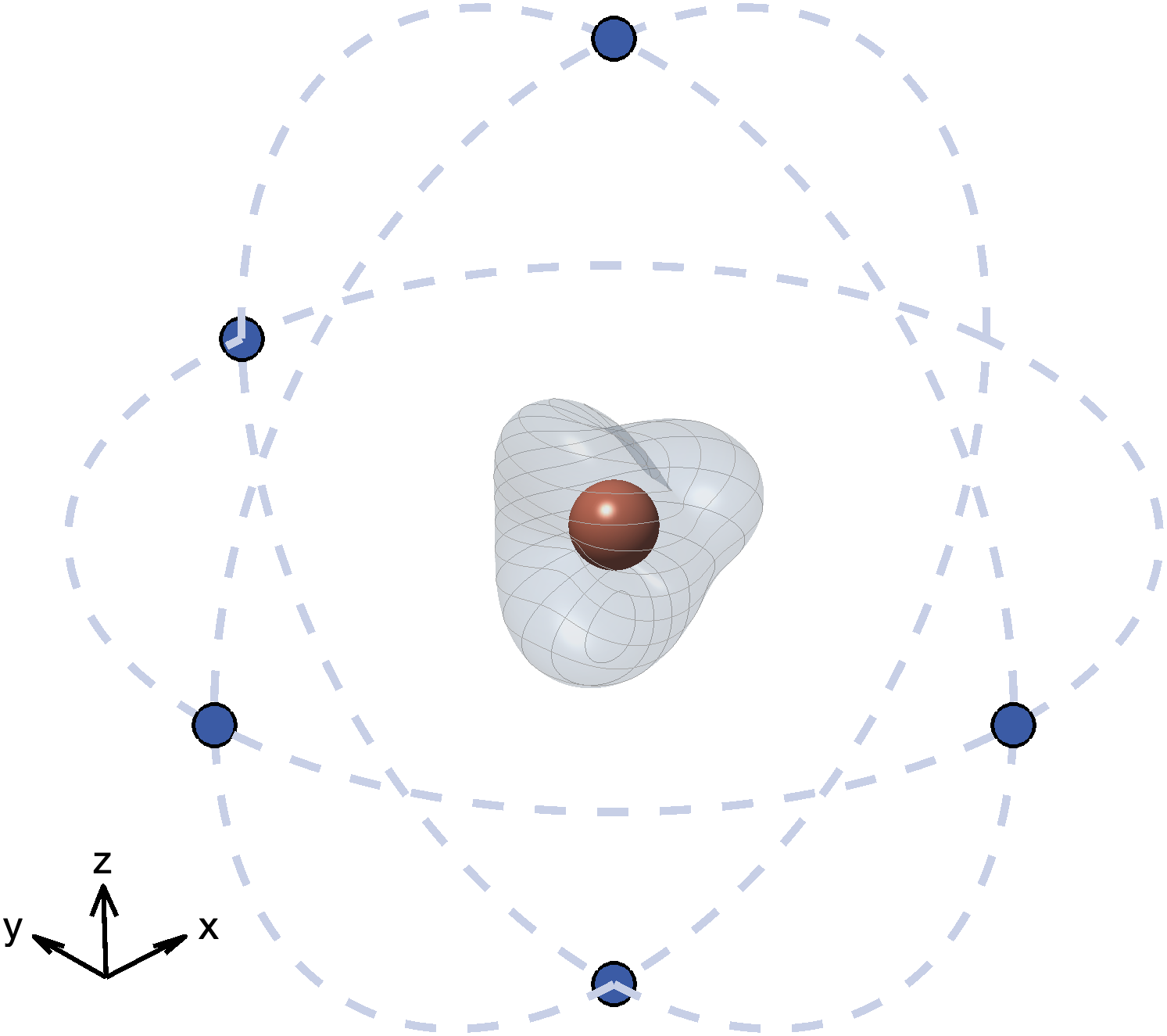} \hfill
\includegraphics[scale=.28]{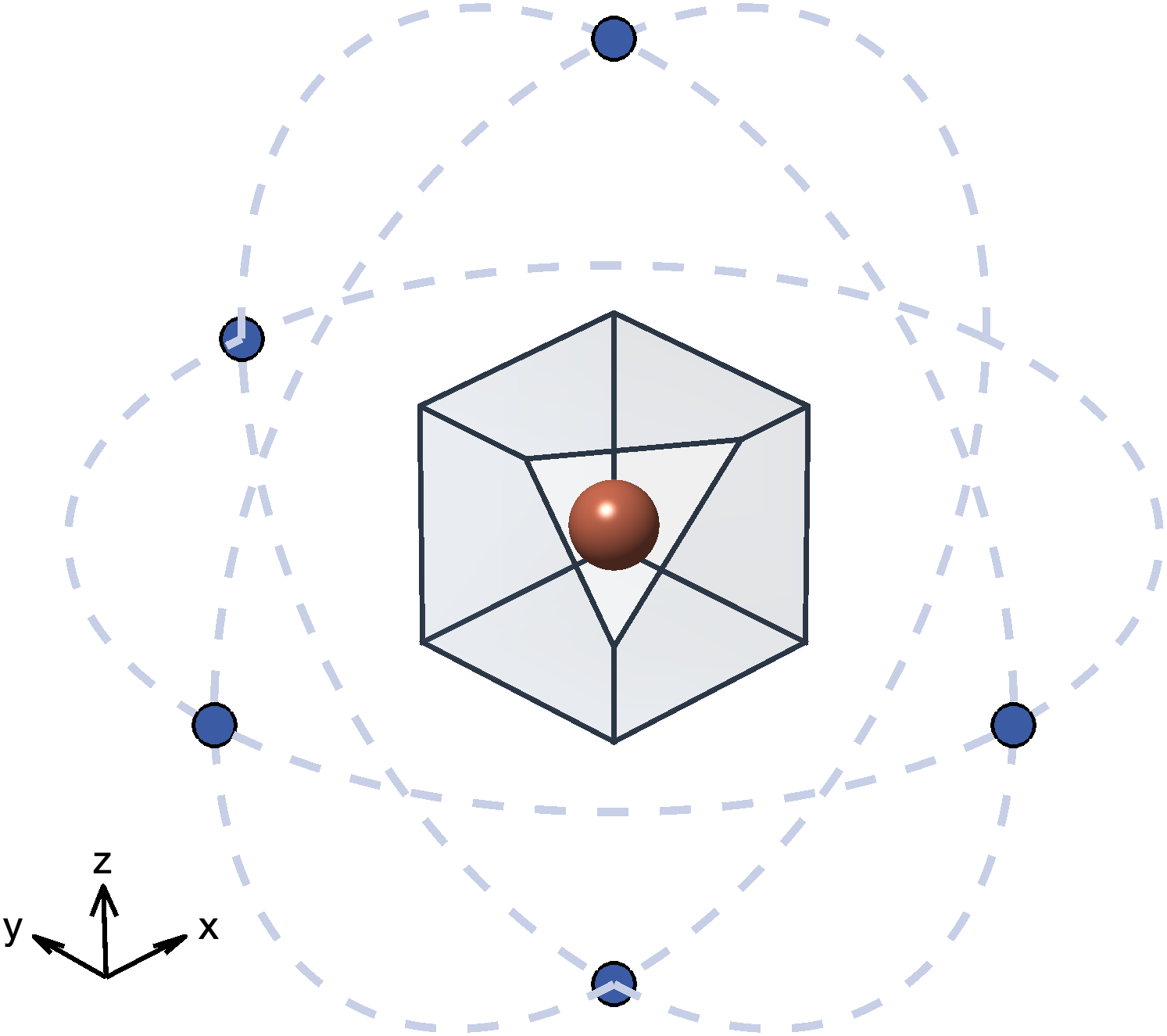}
\caption{Measurement setups for Example 1 (left) and 2 and 3 (right). The blue dots indicate the receiver positions, which are located $6\mathrm{m}$ away from the origin - the position on which the orange-colored initial guess of the Gau\ss--Newton scheme is placed. Arrows of the coordinate crosses point towards the unit vectors in $\R^3$.}
\label{fig:geom}
\end{figure}

 \begin{figure}[t!]
\centering 
\includegraphics[scale=.28]{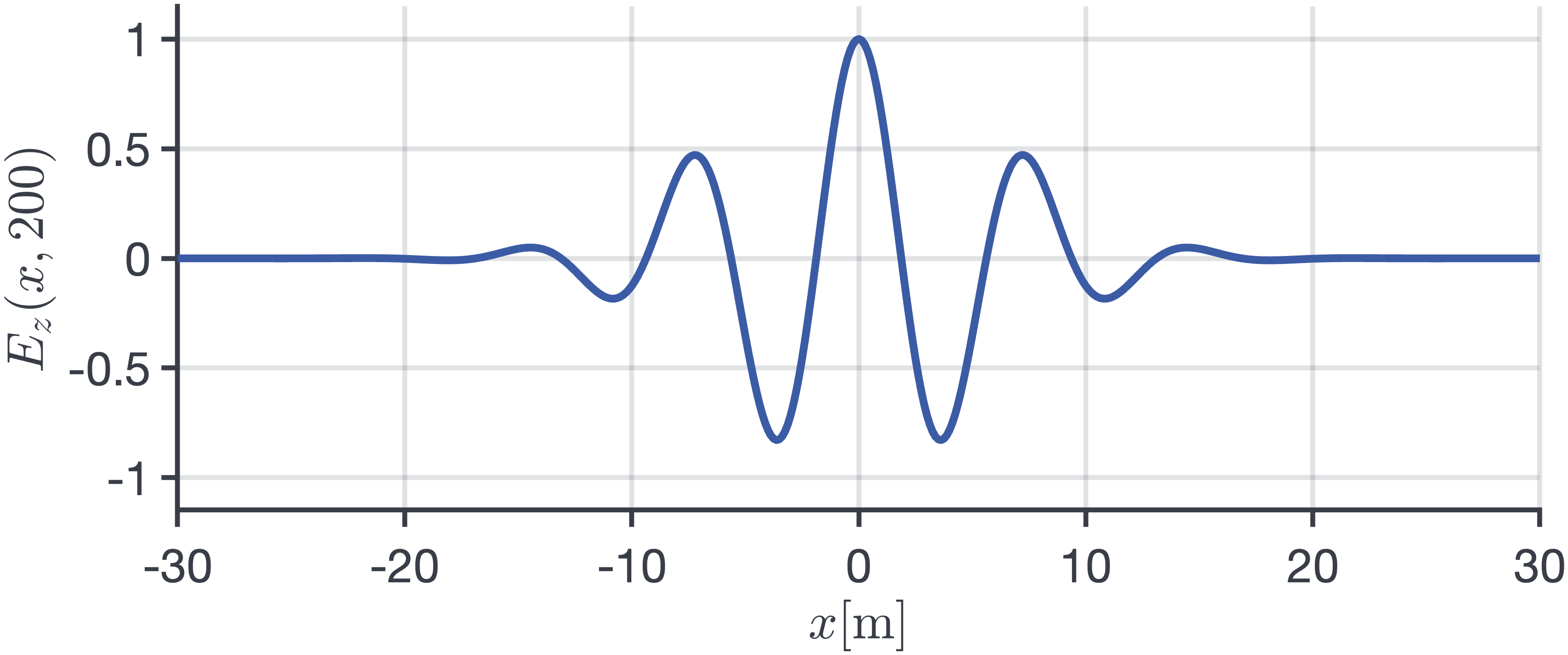}
\includegraphics[scale=.28]{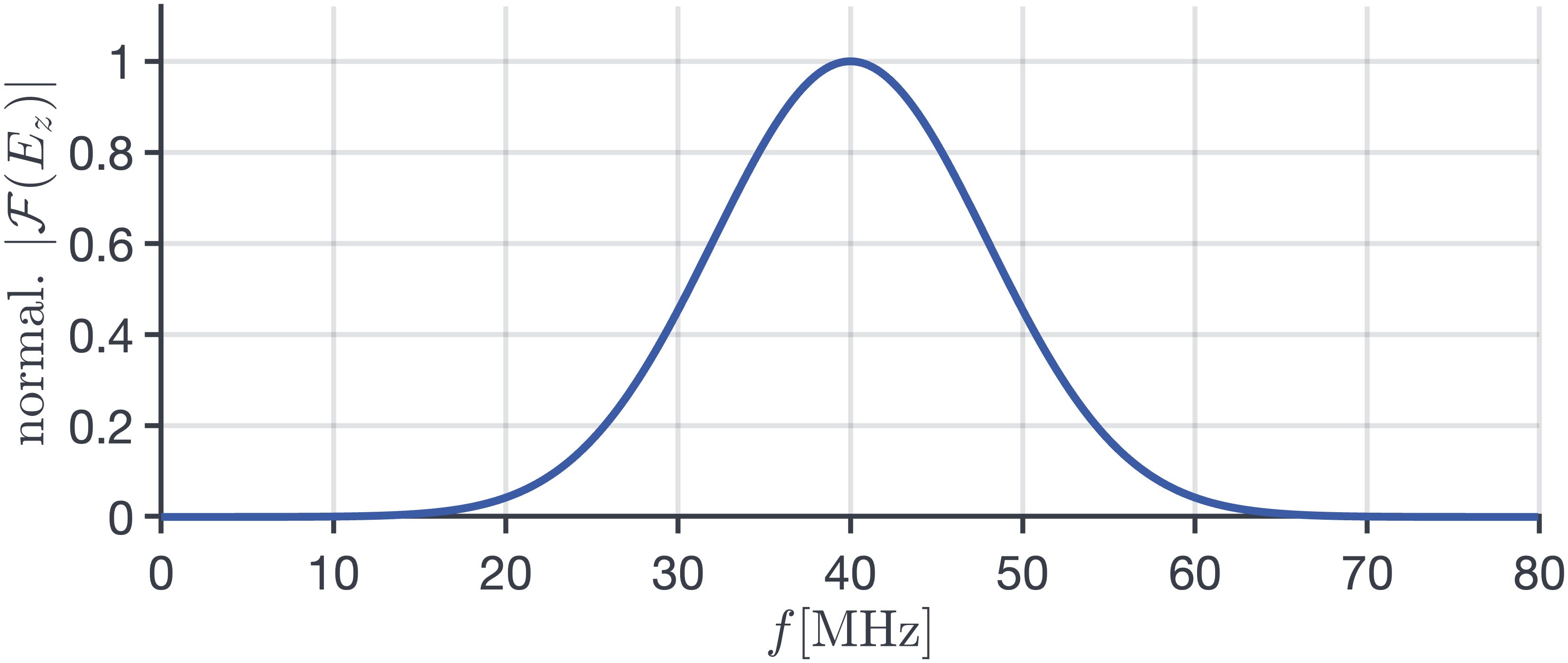}
\caption{The $z$-component of the incident wave $\Ei$ defined as in \eqref{eq:EiG} with coefficients as in Example 1 at the time $t=200\mathrm{ns}$ as a function over the $x$-component. Left: The wave in physical space. Right: Normalized absolute values of the Fourier transform of $\Ei_z$. }
\label{fig:Eivis}
\end{figure}

\textbf{Example 1.}
In our first numerical example the true scattering object is a non-convex star-shaped object, whose radius was described through \eqref{eq:rRep} with a maximal degree of spherical harmonics determined to be $3$.
This object is found in the left hand side of Figure~\ref{fig:geom}.

As an incident wave we pick a Gaussian beam as in \eqref{eq:EiG} with polarization $\bfA = [0,1,1]^\top$, incident direction $\bfd = [1,0,0]^\top$,  a pulse width of $\sigma = 20\mathrm{ns}$, a time delay $t_{\mathrm{lag}} = 200\mathrm{ns}$ and center frequency $f_0 = 40\mathrm{MHz}$.
The $z$-component of the incident wave $\Ei$ as a function of the $x$-component at time $t=200\mathrm{ns}$ is found in the left plot of Figure \ref{fig:Eivis}. 
In the right plot of Figure~\ref{fig:Eivis} one finds the normalized absolute values of the Fourier transform of $\Ei_z$.
We pick $M=5$ receivers located to the backscattering direction, to the left, right, top and bottom and 6$\mathrm{m}$ away from the origin and measure the scattered electric field up to the final time $T=600\mathrm{ns}$. For the simulation of the measurement data, we use a spatial discretization involving 12288 degrees of freedom for the spatial discretization of the scatterer and $400$ time steps.

For the inverse problem we start with a ball centered at the origin with the radius $0.5\mathrm{m}$. 
We use a spatial discretization of star-shaped scatterers with 3072 degrees of freedom and a temporal discretization involving $N_t = 400$ time steps.
Furthermore, we pick $N=10$ as the maximal degree of spherical harmonics in \eqref{eq:rRep}. Therefore, in every iteration 121 different perturbations are considered in order to assemble the Jacobian. 
The regularization parameter in \eqref{eq:outputfun} is chosen to be $\alpha_{\mathrm{reg}}=0.01$. 
 \begin{figure}[t!]
\centering 
\includegraphics[scale=.24]{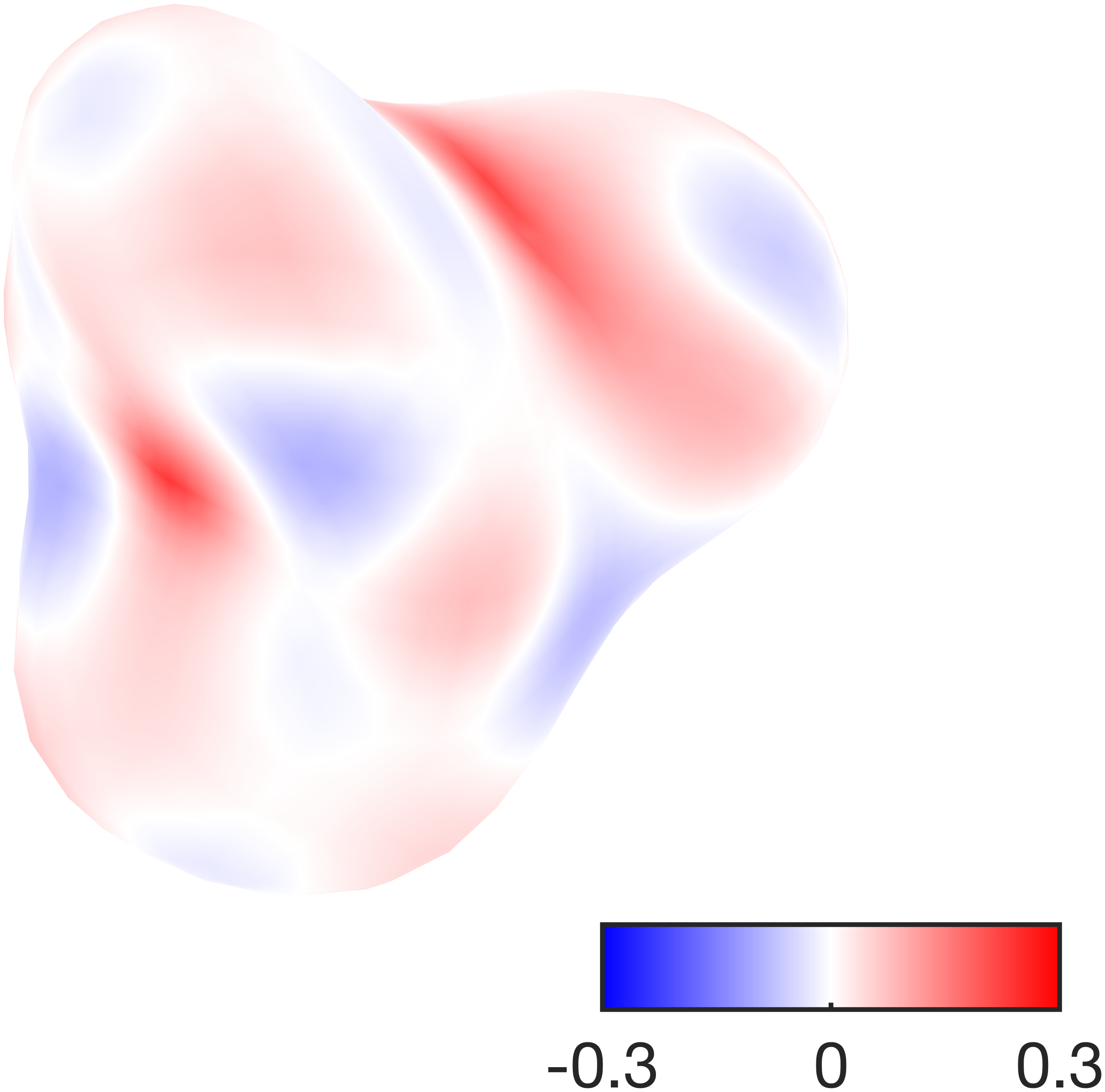} \hfill
\includegraphics[scale=.24]{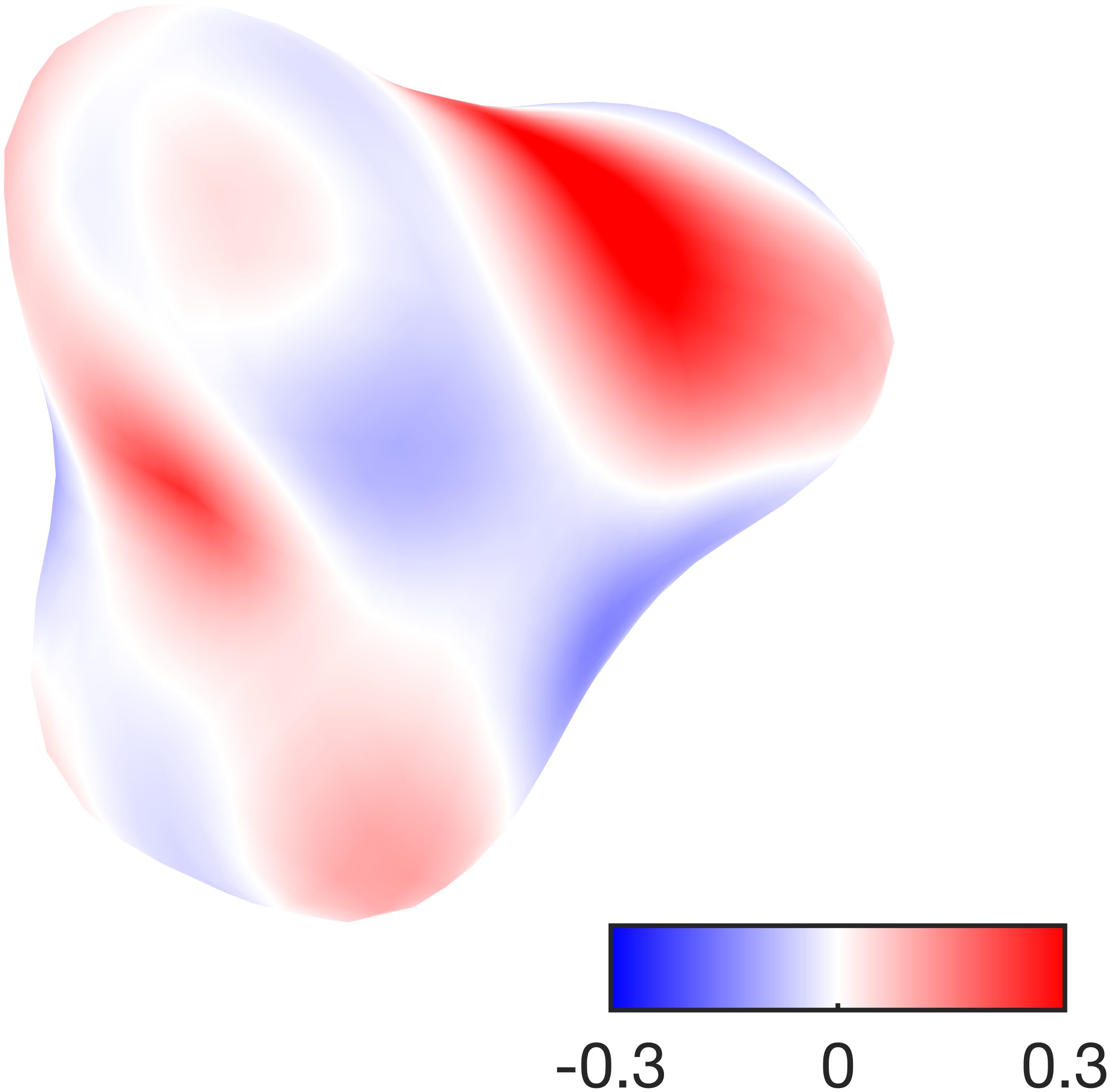} \hfill
\includegraphics[scale=.24]{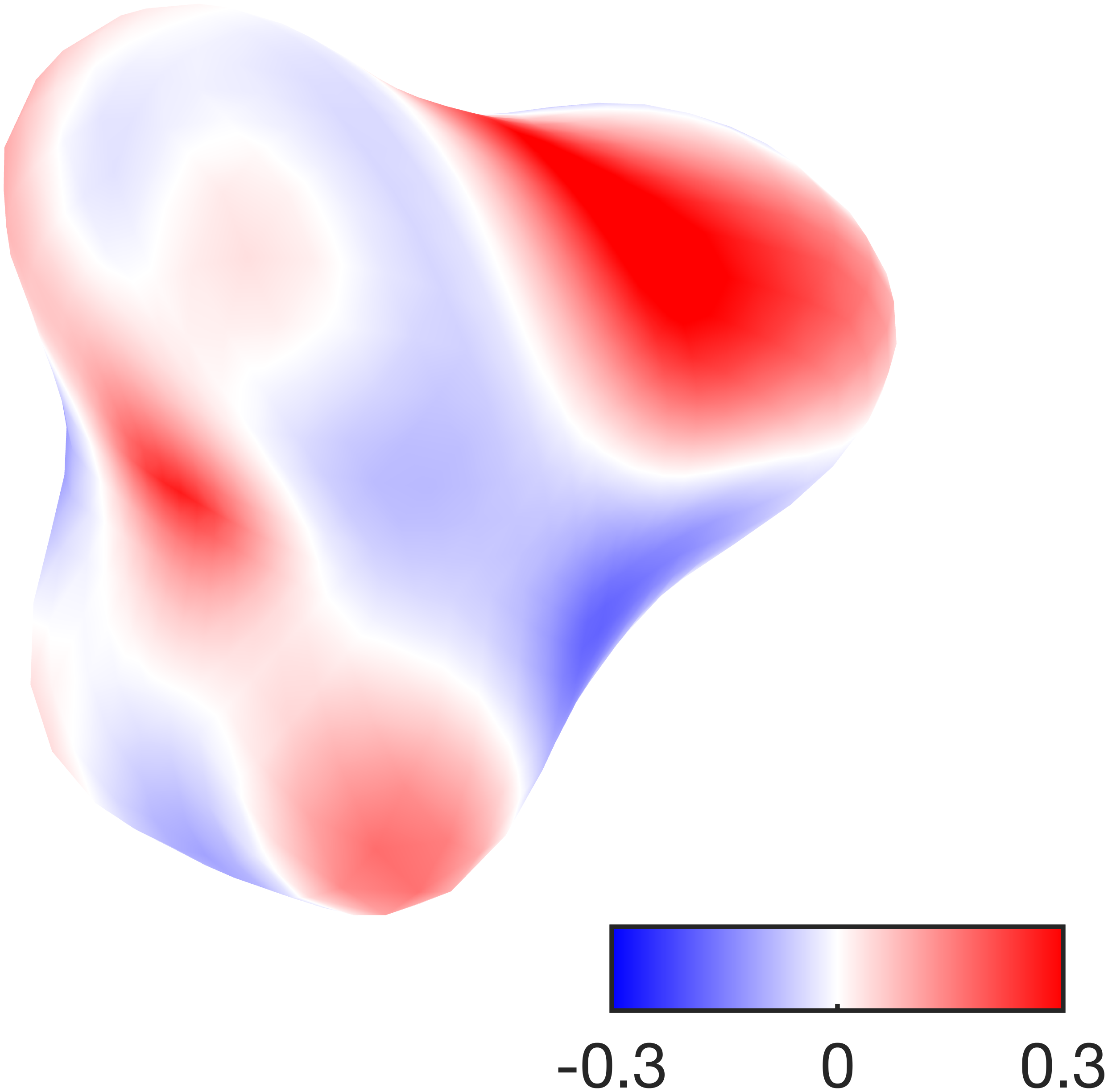}
\caption{Reconstruction of the scatterer from Example 1. The plots show the final iteration of the Newton scheme corresponding to noiseless data (left) and noisy data with 15\% additional noise (middle) and 30\% additional noise (right).}
\label{fig:rec1}
\end{figure}

In the left plot of Figure~\ref{fig:rec1} the quality of the reconstruction is visualized: 
The object shows the final iteration of the Gau\ss--Newton scheme that was obtained after 13 iterations.
The colors visualize the signed distance of the true object to the reconstruction in meters.
Red colored sections visualize segments of the reconstruction, which exceed the true scattering object's geometry, while blue sections show regions, where the reconstruction remains below the true geometry.
The reconstruction captures the overall shape of the scattering object nicely. It is off from the true object mostly in parts that lie in the shadow of the incoming wave.

Next, we perform shape reconstructions from noisy data.
For this purpose we artificially pollute the given data $\bfg$ by a randomly generated uniformly distributed noise.
The relative noise level is chosen to be $15\%$ and $30\%$, respectively.
The final reconstructions of the Newton scheme given these noisy data is found in the middle and in the right plot of Figure~\ref{fig:rec1}. The iterative scheme stops after 12 and 13 steps, respectively.
While the overall reconstruction is still a decent approximation to the unknown scatterer's shape, it is worse than in the noiseless case. The reconstruction parts that lie in the shadow of the object significantly worsened.

\textbf{Example 2.}
We study the situation, in which the exact scattering object is a cube with one sawn-off corner that is aligned with the coordinate axes and centered at the origin with side length 3$\mathrm{m}$. This object is found in the right hand side of Figure~\ref{fig:geom}.

We pick $M=5$ receivers located to the backscattering direction, to the left, right, top and bottom and 6$\mathrm{m}$ away from the cube and measure the scattered electric field up to the final time $T=600\mathrm{ns}$.
 \begin{figure}[t!]
\centering 
\includegraphics[scale=.2]{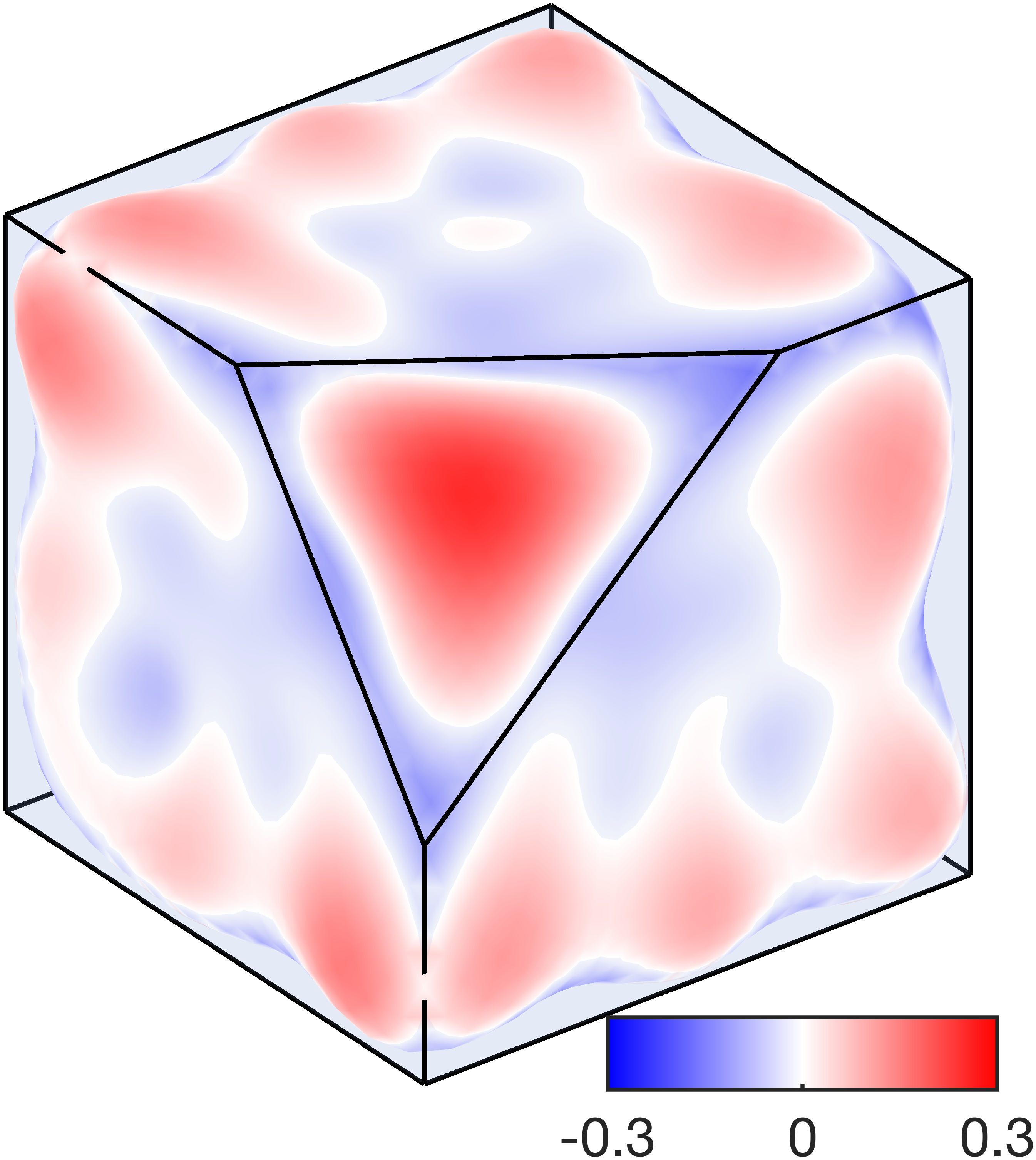} \hfill
\includegraphics[scale=.2]{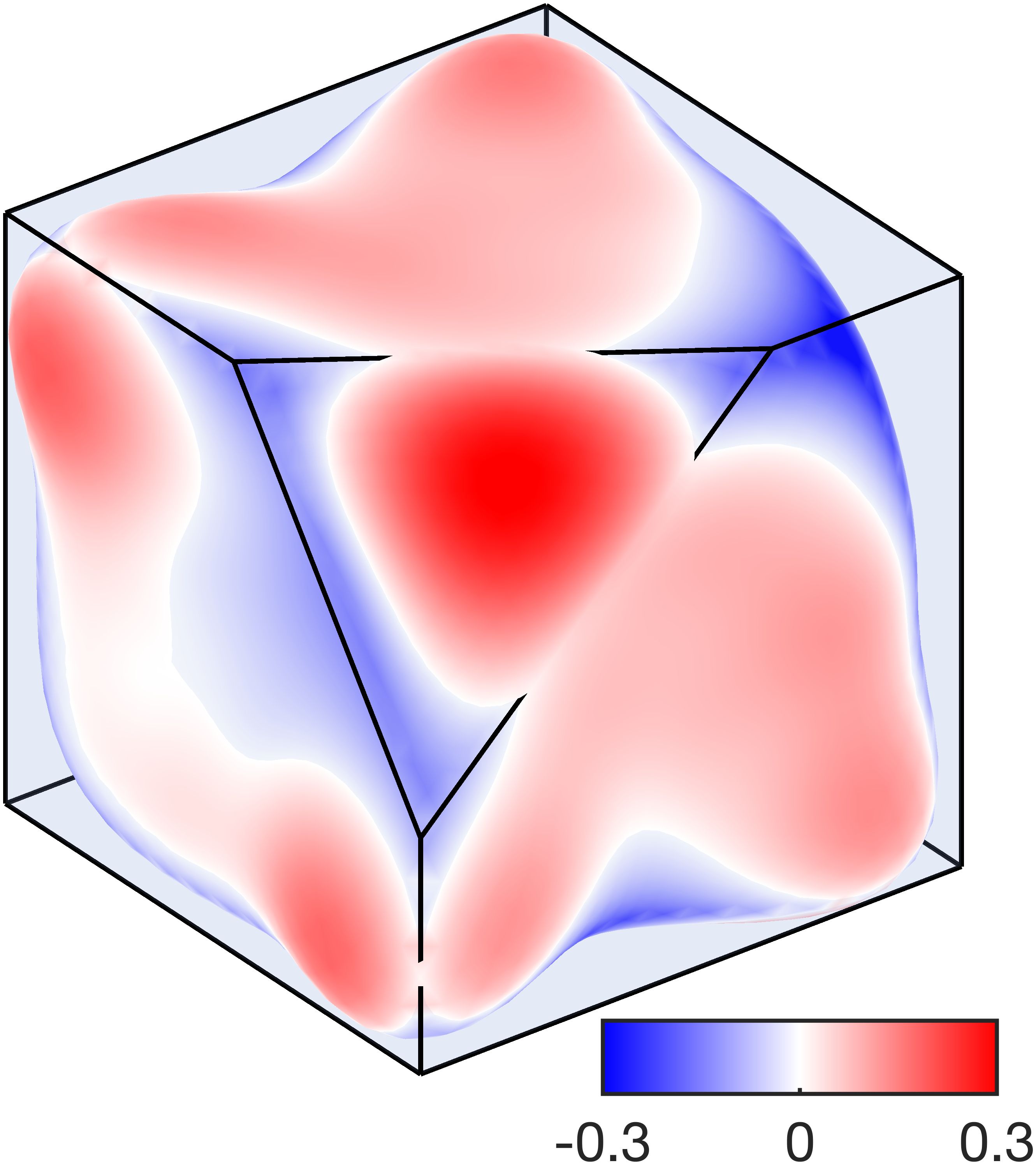} \hfill
\includegraphics[scale=.2]{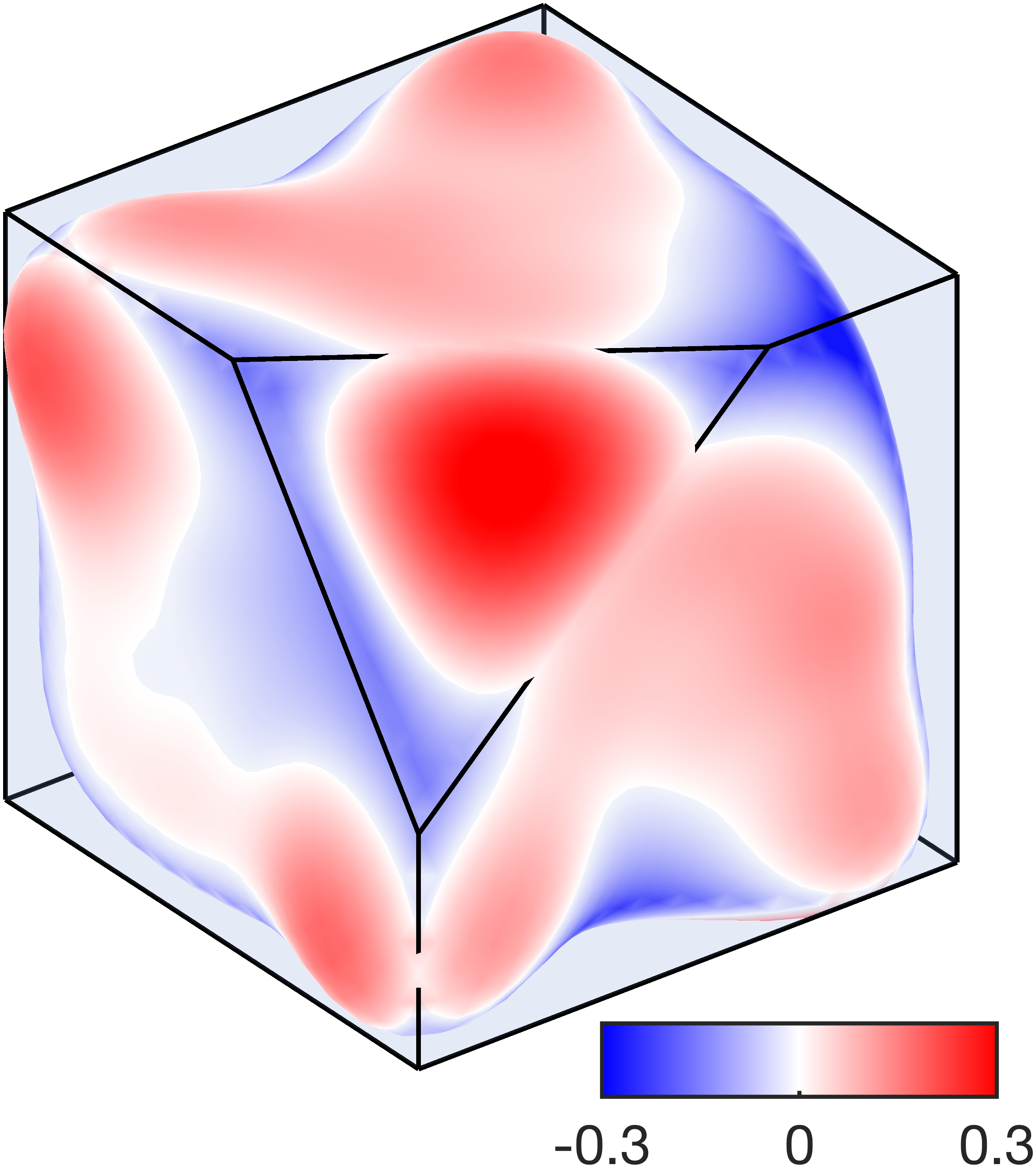}
\caption{Reconstruction of the scatterer from Example 2. The plots show the final iteration of the Newton scheme corresponding to noiseless data (left) and noisy data with 15\% additional noise (middle) and 30\% additional noise (right).}
\label{fig:rec2}
\end{figure}

In the left plot of Figure~\ref{fig:rec2} the quality of the reconstruction, obtained after 14 Newton steps, is visualized with the same convention on color coding as in Example 1.
The black-lined frame marks the outline of the true scattering object, while the colored object is the final iterate of the Gau\ss--Newton reconstruction. 
We find that the overall reconstruction is decent, however, in the region of the missing corner, the reconstruction is off.
This might be related to the combination of the incident direction and the receiver placement; a related study is conducted in Example 3.
Finally, we also perform shape reconstructions from noisy data. As before, we artificially pollute the data by a randomly generated uniformly distributed noise with a relative noise level of $15\%$ and $30\%$, respectively.
The final reconstructions of the Newton scheme given these noisy data, obtained after 12 and 13 steps, respectively, is found in the middle and in the right plot of Figure~\ref{fig:rec2}.
While the overall reconstruction is still a decent approximation to the unknown scatterer's shape, it is worse than in the noiseless case.
We stress that the automatic correction of the regularization parameter does not decrease the parameter as far as in the noiseless case. 
Improvements on the reconstruction might be obtained by decreasing the regularization parameter further, by choosing a more sophisticated rule for the reconstruction parameter, or, by using a mesh generation that is more flexible in general. 

\textbf{Example 3.} 
We consider the same scattering object as in Example 2 and treat the same measurement setup as before, i.e., the one depicted in the right hand side of Figure~\ref{fig:geom}. We pick the same parameters for the incident wave, except that we change the incident direction to be directed towards the cut-off corner, i.e., we pick $\bfd = 1/\sqrt{3}[1, 1, -1]^\top$. Note that still $\bfA \cdot \bfd = 0$, so $\Ei$ is still a valid choice for an incident wave for our setup.
For the reconstruction, we take the same parameters and overall setup as in Example 2.

The reconstruction with noiseless data and with noisy data with $15\%$ and $30\%$ relative noise is found in Figure~\ref{fig:rec3}. The Newton scheme stopped after 13, 11 and 12 steps, respectively.
In comparison to the results in Figure~\ref{fig:rec2} the reconstruction under estimates now on the section, where the corner is missing, while the overall shape reconstruction is still decent.
Different measurement setups might improve the reconstruction further. In particular, the reconstruction might profit from using backscattering data.

 \begin{figure}[t!]
\centering 
\includegraphics[scale=.2]{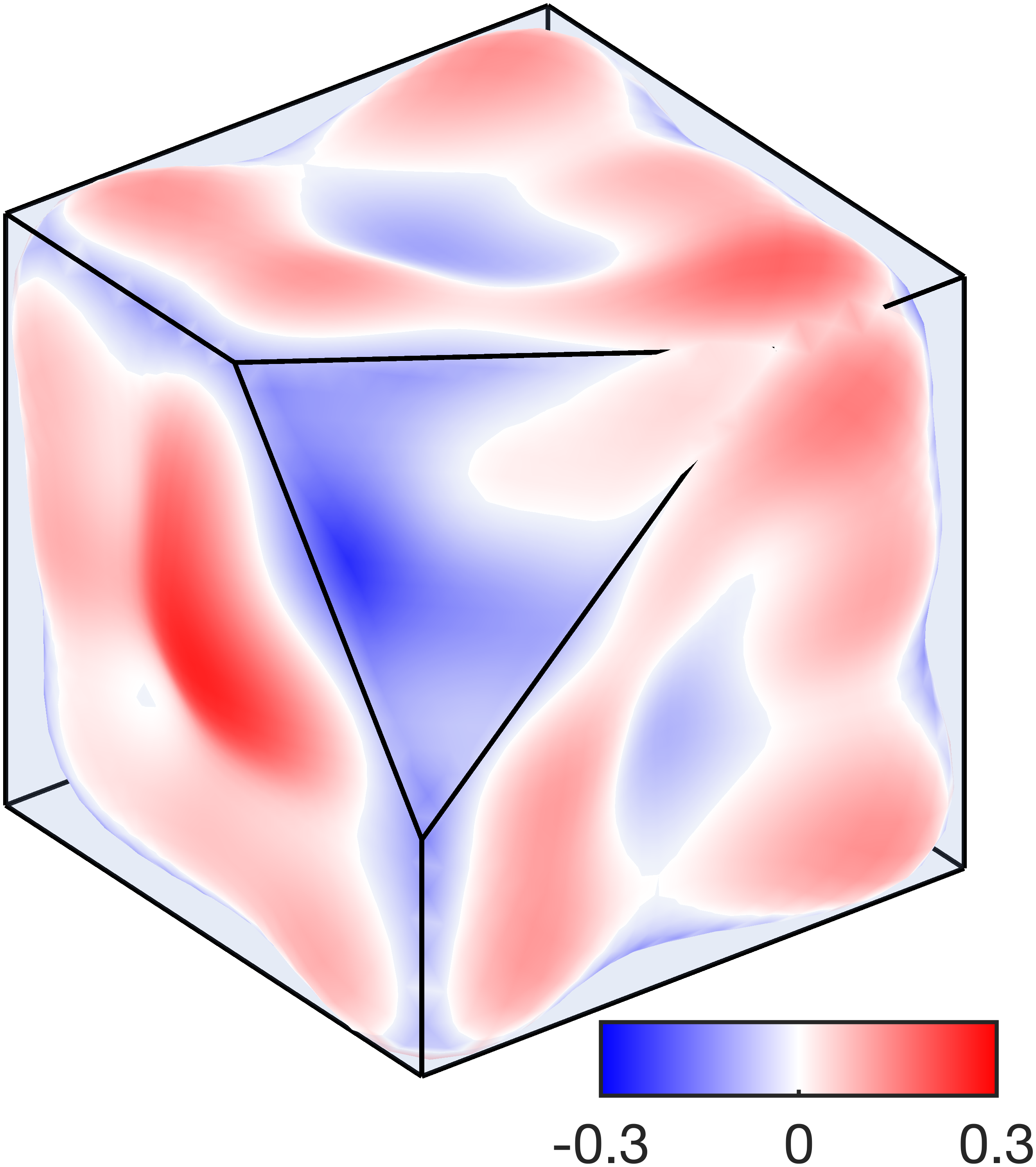} \hfill
\includegraphics[scale=.2]{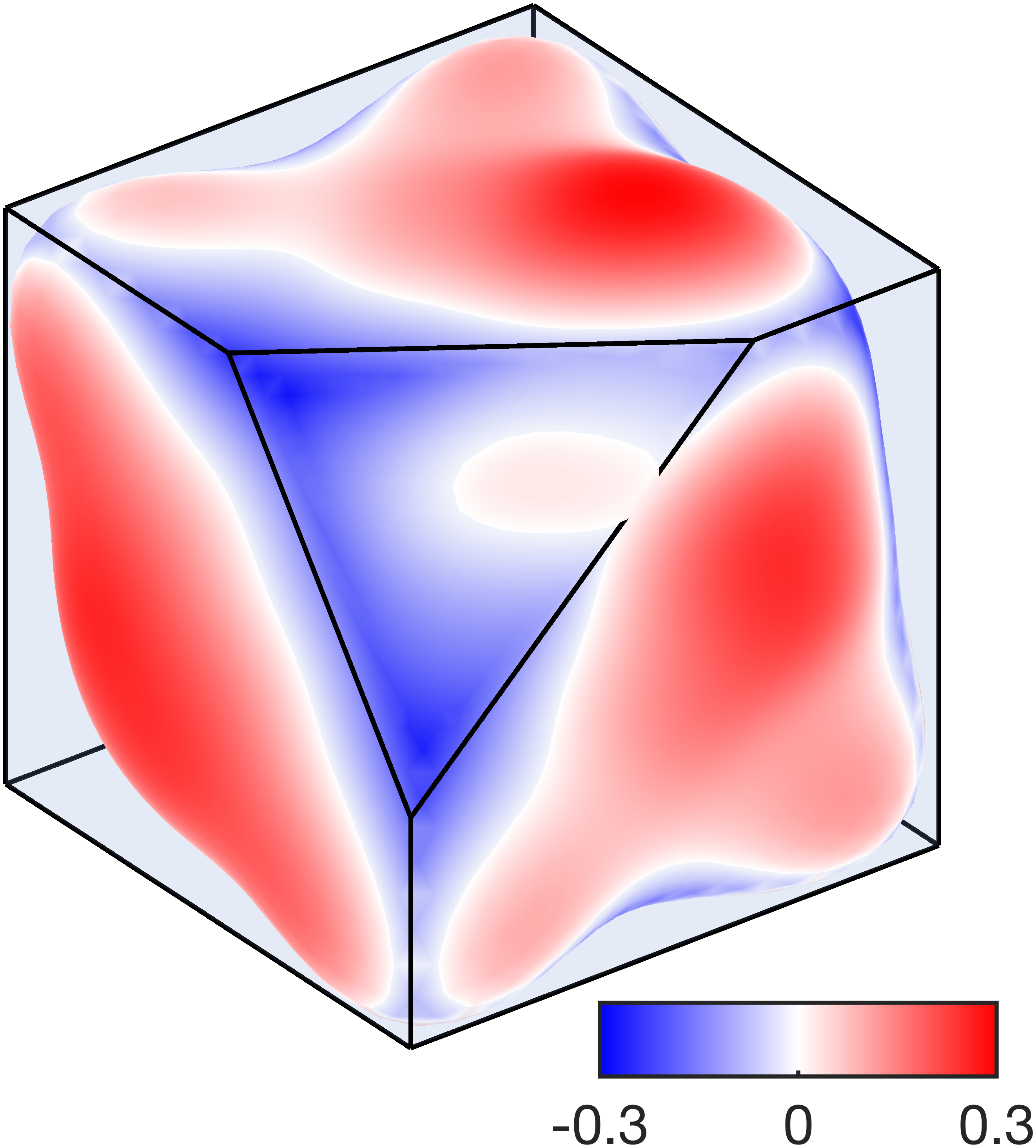} \hfill
\includegraphics[scale=.2]{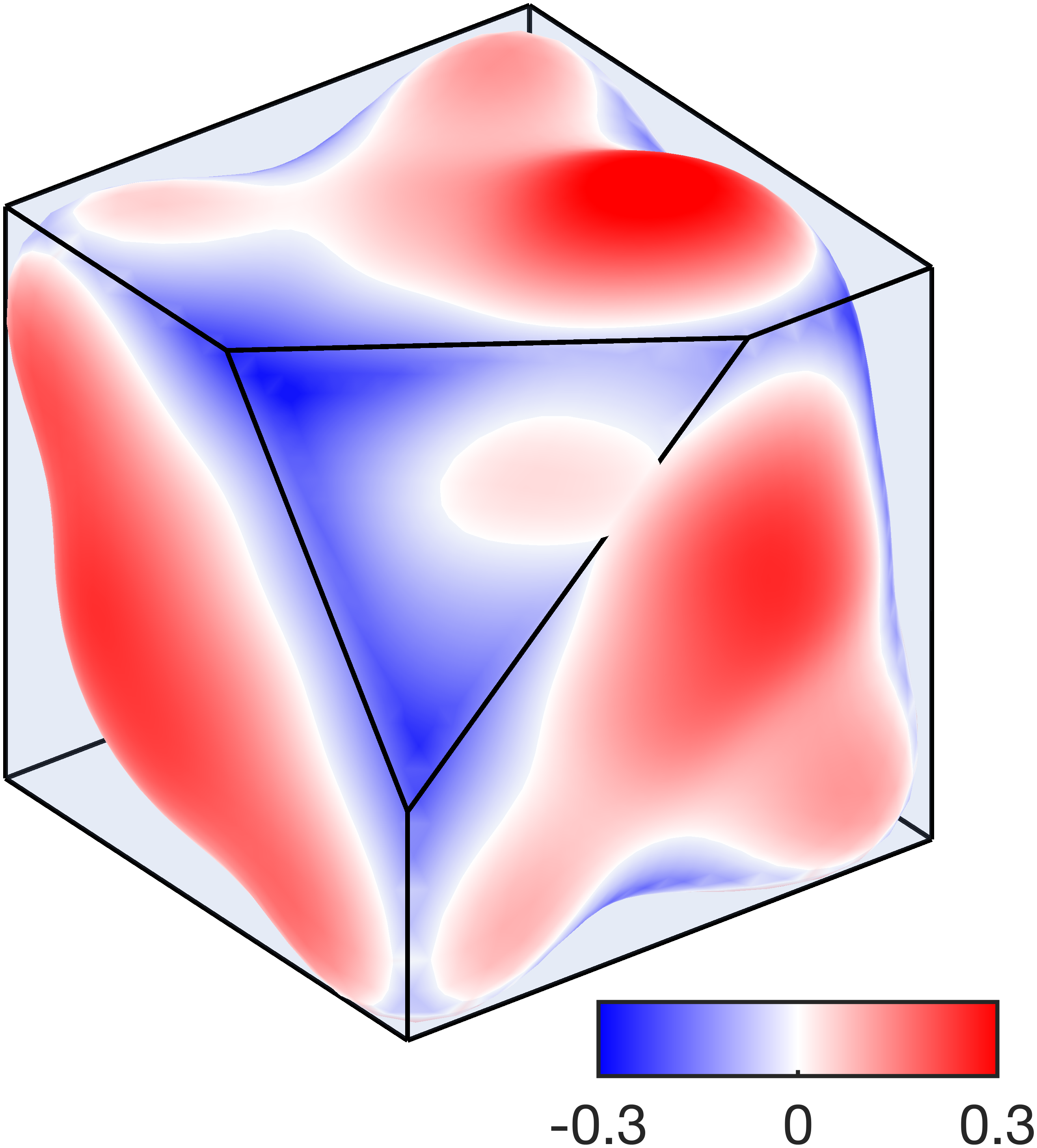}
\caption{Reconstruction of the scatterer from Example 3. The plots show the final iteration of the Newton scheme corresponding to noiseless data (left) and noisy data with 15\% additional noise (middle) and 30\% additional noise (right).}
\label{fig:rec3}
\end{figure}

\section*{Acknowledgments}
The author was supported by the Research Council of Finland 
(Flagship of Advanced Mathematics for Sensing, Imaging and Modelling 
grant~359182).
The author wishes to acknowledge CSC – IT Center for Science, Finland, for computational resources through the project Fast Algorithms for Inverse Learning (project number 2008159).

\section*{AI tool disclosure}
During the preparation of this manuscript, OpenAI's ChatGPT GPT-5.6 and Anthropic's Claude Opus 5 was used for language editing and literature search.
The proof of Lemma \ref{lem:TmTh} has been carried out with minor assistance from Claude.
Moreover, Claude assisted in the implementation of \eqref{def:T1h} and  \eqref{def:T2h} within the \textit{bempp} framework. 
After using these tools, their provided content was verified, reviewed and edited as needed. The author takes full responsibility for the content of this manuscript.

\small

\bibliographystyle{abbrvurl}

\bibliography{references}
\end{document}